\newtheorem{thm}{Theorem}[section]
\newtheorem{cor}[thm]{Corollary}
\newtheorem{lem}[thm]{Lemma}
\newtheorem{prop}[thm]{Proposition}
\theoremstyle{definition}
\newtheorem{defn}[thm]{Definition}
\newtheorem{rem}[thm]{Remark}
\newtheorem*{rem*}{Remark}
\newtheorem*{mainthm}{Main Theorem}
\numberwithin{equation}{section}
\newtheorem{subclaim}{Claim}
\definecolor{OrangeRed}{cmyk}{0,0.6,1,0}            
\definecolor{DarkBlue}{cmyk}{1,1,0,0.20}
\definecolor{DarkGreen}{cmyk}{1,0,0.6,0.2}
\definecolor{myblue}{rgb}{0.66,0.78,1.00}
\definecolor{Violet}{cmyk}{0.79,0.88,0,0}
\definecolor{Lavender}{cmyk}{0,0.48,0,0}
\newcommand{\diam}{\operatorname{diam}}
\newcommand{\eucl}{\operatorname{eucl}}
\newcommand{\dist}{\operatorname{dist}}
\renewcommand{\AA}{{\cal A}}
\newcommand{\BB}{{\cal B}}
\newcommand{\FF}{{\cal F}}
\newcommand{\LL}{{\cal L}}
\newcommand{\PP}{{\cal P}}
\renewcommand{\SS}{{\cal S}}
\newcommand{\TT}{{\cal T}}
\newcommand{\VV}{{\cal V}}
\newcommand{\XX}{{\cal X}}
\newcommand{\C}{{\mathbb C}}
\newcommand{\D}{{\mathbb D}}
\newcommand{\N}{{\mathbb N}}
\newcommand{\R}{{\mathbb R}}
\newcommand{\ra}{\rightarrow}
\newcommand{\ov}{\overline}
\renewcommand{\epsilon}{\varepsilon}
\renewcommand{\phi}{\varphi}
\newcommand{\s}{{\underline{s}}}
\newcommand{\Brays}{\BB_{\operatorname{rays}}}
\DeclareRobustCommand{\Bacc}[1]{\accentset{B,r}{#1}}
\newcommand{\compl}{\C\setminus (\ov{\D}\cup\delta)}
 \title{A bound on the number of rationally invisible repelling orbits}
\author{Anna Miriam Benini \thanks{This project has received funding from the European Union's Horizon 2020 research and innovation programme under the Marie Sk\l odowska-Curie Grant Agreement No. 703269   COTRADY} {\small\ and}  N\'uria Fagella \thanks{Partially supported by the Spanish 
grant MTM2017-86795-C3-3-P, the Maria de Maeztu Excellence Grant MDM-2014-0445 of the BGSMath and the catalan grant 2017 SGR 1374.} \\
\small Dept.~de Matem\`atica y Inform\`atica, Universitat de Barcelona\\ \small Barcelona Graduate School of Mathematics (BGSMath) \\ 
\small Gran Via 585 08007, Barcelona, Catalonia}
\begin{document}

\maketitle  
\begin{abstract}
  We consider entire transcendental maps with bounded set of singular values such that  periodic rays exist and land. For such maps, we prove a refined version of the Fatou-Shishikura inequality which takes into account rationally invisible periodic orbits, that is, repelling cycles which are not landing points of any periodic ray. More precisely, if there are $q<\infty$ singular orbits, then the sum of the  number of attracting, parabolic, Siegel, Cremer or rationally invisible orbits is bounded above by $q$. In particular, there are at most $q$ rationally invisible repelling periodic orbits. The  techniques presented here  also apply to the   more general setting  in which the function is allowed to have infinitely many singular values.  
\end{abstract}

\section{Introduction}

Consider an entire transcendental function $f$ and let $S(f)$ be its  set of \emph{singular values} 

$$S(f):=\ov{\{\text{asymptotic values, critical values}\}}.$$

Then $f:\C\setminus f^{-1}(S(f))\ra\C\setminus S(f) $ is an unbranched covering of infinite degree.
The closure of the orbits of all singular values is called the \emph{postsingular set} and is denoted by $$\PP(f):=\ov{\bigcup_{s\in S(f), n>0}f^n(s)}.$$ A singular value $v$ is \emph{non-recurrent} if it does not belong to its $\omega$-limit set  $\omega(v)$, defined  as the set of accumulation points for the orbit $\{f^n(v)\}_{n\in\N}$.  
 
Many of the intricate patterns that arise in the dynamics of holomorphic maps are due to the presence of singular values and to the way in which their orbits interact with each other. For example, it is not difficult  to show that if the unique  singular value of a quadratic polynomial is  non-recurrent then the Julia set is locally connected \cite[Expos\'e X]{DH84}. Similarly, the presence of non-repelling periodic orbits is entangled with the behavior of singular orbits. For example, every immediate attracting or  parabolic basin needs to contain  a singular orbit, and each Cremer point or  point in the boundary of a Siegel disk needs to be accumulated by points in the postsingular set \cite{Fa20,Mi}.   

As a consequence of this deep relationship, it is possible to give an upper bound for the number of non-repelling cycles  in terms of the number of singular values. This is known as the {\em Fatou-Shishikura inequality} \cite{Sh87, EL92}, and it states that if an entire map (polynomial or transcendental) has $q$ singular values, then 
\[
 N_{\text{non-repelling}}  \leq q,
 \]
where  $N_{\text{non-repelling}}$ stands for the number of attracting, parabolic, Cremer and Siegel cycles. The proof of this celebrated result relies on perturbations in parameter space. However, with   additional dynamical assumptions on the map (for example, bounded postsingular set), a more combinatorial approach in the dynamical plane also associates each non-repelling cycle to a singular orbit in a precise mathematical way, and in such a way that the latter cannot be associated to any other non-repelling cycle \cite{Ki00,BF17}. 

To be somewhat more precise on this extra assumption, we must talk about {\em rays}  \cite{DH84,Mi,RRRS}. For polynomials, and for many transcendental maps the \emph{escaping set}, defined as 
$$
I(f):=\{z\in\C;\  f^n(z)\ra\infty\},
$$ 
consists of injective, mutually disjoint curves $G:(0,\infty)\ra I(f)$ tending to infinity as $t\to \infty$. These are called \emph{external rays} for polynomials and  \emph{dynamic rays} (or \emph{hairs}) for transcendental maps (see Section~\ref{sect:Background}  for a precise definition), although in this paper we will often call them just \emph{rays}. A ray $G$   is \emph{periodic} if $f^n(G)\subset G$ for some $n\in\N$, and we say that it  \emph{lands} at a point $z_0\in\C$ if $G(t)\ra z_0$ as $t\ra 0$.   Periodic rays can only land at parabolic or repelling periodic points by the Snail Lemma \cite{Fa20,Mi}.  

Polynomial rays foliate the attracting basin of infinity and hence lie in the Fatou set. Their landing is tightly related to the topology of the Julia set. Indeed, the Julia set is locally connected if and only if  all rays land, in which case the Julia set can be parametrized by the unit circle.  For transcendental maps, the situation is more complex. To start with, it is not always true that the escaping set is formed by rays, although this is the case for a wide class of entire transcendental functions \cite{RRRS,BRG17}. This class includes  the class $\Brays$ of functions which are finite compositions of functions of finite order with bounded set of singular values, and for such functions the escaping set lies entirely in the Julia set.

In the case of polynomials or maps in $\Brays$, if the postsingular set is bounded all periodic rays land (at repelling or parabolic) periodic points \cite{DH84,Hu,Mi,Re08,De}.  Conversely, one may ask whether every repelling or parabolic point is the landing point of a ray or, in other words, whether repelling and parabolic points are always accessible from the escaping set.  The answer to this question is not always positive and motivates the following definition.
\begin{defn}[Rationally invisible periodic orbit]
A repelling periodic orbit of an entire map (polynomial or transcendental) is called \emph{rationally invisible} if one of the points in the orbit (and hence all of them) is not the landing point of any periodic ray. 
\end{defn}

The non-existence of rationally invisible periodic orbits, whenever it can be proven, has consequences for the study of parameter spaces.  In polynomial dynamics, for example, it represents the starting point for Yoccoz puzzle and for much of the machinery which lead to most of the actual rigidity results. In transcendental dynamics, it is related to the non-existence of ghost limbs attached to hyperbolic components. It is therefore of interest to understand the situations under which these special orbits may exist. 

As it turns out, rationally invisible orbits, despite being repelling, are also tightly related to the orbits of the singular values and more precisely, to unbounded singular orbits. Indeed, if an entire map (polynomial or transcendental in $\Brays$) has a bounded postsingular set, then every repelling or parabolic periodic point is the landing point of at least one and at most finitely many periodic rays, and hence there are no rationally invisible orbits (see \cite{DH84,Hu,Mi} for polynomials, \cite{BF15,BL14, BRG17} for transcendental). 

In the absence of this boundness restriction, one would like to give an upper bound for the number of rationally invisible periodic orbits in terms of the number of singular values, so as to produce a refinment of the Fatou-Shishikura inequality. This is indeed the case for polynomials \cite[Corollary 1]{LP}, \cite{BCLOS16}, and also for transcendental maps, as we show in the main result of this paper (see also Theorem~\ref{thm:main theorem for fp} for a stronger statement).

As usual we say that a singular value escapes along periodic rays if its orbit converges to infinity and eventually belongs to a cycle of periodic rays.
  
\begin{mainthm}\label{thm:main thm intro} Let $f\in\Brays$ such that periodic rays land and assume that there are no singular values escaping along  periodic rays. Suppose that $f$ has at most $q<\infty$ singular orbits which do not   belong to attracting or parabolic basins. 

Let   $N_{\text{indifferent}}$ denote  the number of Cremer cycles and cycles of Siegel disks,   and $N_{\text{invisible}}$ denote the number of  rationally invisible orbits. Then we have  
$$ 
N_{\text{indifferent}} +N_{\text{invisible}} \leq q .
$$
In particular,  there are at most $q$ rationally invisible repelling periodic orbits.
\end{mainthm}

One may wonder about how strong is the assumption that periodic rays land in the transcendental setting. For polynomials, this assumption is equivalent to the assumptions that no critical points escape along  periodic external rays and is implied by the standard assumption of the Julia set being connected.  
It is expected that also in the transcendental case a periodic ray lands unless its forward orbit contains a singular value.  This has been proven for  functions in the exponential family using   parameter space based arguments \cite{Re06}, which seem to be  out of reach even for functions with finitely many singular values.  
The assumption that there are no singular values escaping along periodic rays  is evidently weaker that the 
hypothesis that periodic rays  do not intersect  the postsingular set. In fact, the latter hypothesis implies landing  of periodic rays \cite{Re08}. 

The Main Theorem has the following immediate corollary.
\begin{cor}Let $f\in\Brays$ such that periodic rays land and assume that there are no singular values escaping along  periodic rays.  Suppose that $f$ has at most $q$ singular orbits which do not  belong to attracting or parabolic cycles. Then there are at most $q$ repelling periodic orbits which are rationally invisible. 
\end{cor}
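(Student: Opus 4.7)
The plan is to deduce the Corollary directly from the Main Theorem by dropping the nonnegative term $N_{\text{indifferent}}$. The Corollary has the same standing hypotheses as the Main Theorem: $f\in\Brays$, periodic rays land, no singular value escapes along periodic rays, and a uniform bound $q$ on the number of singular orbits not captured by attracting/parabolic dynamics. The Main Theorem then furnishes the inequality
\[
N_{\text{indifferent}} + N_{\text{invisible}} \leq q.
\]
Since $N_{\text{indifferent}}\geq 0$ trivially (it counts cycles), we immediately obtain $N_{\text{invisible}}\leq q$, which is exactly the conclusion of the Corollary once we recall that, by \textbf{Definition}, rationally invisible orbits are by construction repelling periodic orbits.

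The only subtlety worth addressing in writing is that the Main Theorem phrases the hypothesis as ``singular orbits which do not belong to attracting or parabolic \emph{basins}'' while the Corollary uses ``\emph{cycles}''. I would note briefly that the two are compatible for the purposes of the deduction: any singular orbit lying in an attracting or parabolic cycle in particular lies in the corresponding basin, so the set of singular orbits excluded under the Corollary's formulation is contained in the set excluded under the Main Theorem's formulation. Consequently, the Main Theorem's $q$ is bounded by the Corollary's $q$, and the inequality propagates.

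The main (and only) obstacle is essentially bookkeeping: verifying that no additional hypothesis of the Main Theorem is needed beyond those already assumed in the Corollary, and matching the terminology between ``rationally invisible orbit'' (as counted by $N_{\text{invisible}}$) and ``repelling periodic orbit which is rationally invisible'' (the object counted in the Corollary). Both match by \textbf{Definition}, so no further argument is required and the proof reduces to a single sentence applying the Main Theorem.
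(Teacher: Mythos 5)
Your proof is correct and is exactly the paper's (implicit) argument: the Corollary is deduced immediately from the Main Theorem by discarding the nonnegative term $N_{\text{indifferent}}$ in the inequality $N_{\text{indifferent}}+N_{\text{invisible}}\leq q$. Your side remark on reconciling ``cycles'' versus ``basins'' reflects a looseness already present in the paper's own wording; the only nitpick is that a singular orbit landing exactly on a parabolic cycle lies on the \emph{boundary} of the parabolic basin rather than in it, but this edge case is not what either statement intends to distinguish and does not affect the deduction.
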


The only previous known result in the direction of putting a bound on the number of rationally invisible periodic orbits of transcendental maps is due to Rempe-Gillen \cite{Re06}, and states that for  any $f_c(z)=e^z+c$ there is at most one rationally invisible periodic orbit. The proof uses arguments in the parameter space of the exponential family and relies  crucially on the existence and structure of  wakes  in the parameter plane. 

Instead, the proof that we present in this paper uses the  structure of the dynamical plane carved by periodic rays \cite{BF15}, and is of a local nature. As a bonus, it also gives more information about the accumulation behavior of the singular orbits (see Theorem~\ref{thm:main theorem for fp}).

As a concluding remark, let us note that in exponential dynamics, for parameters for which the postsingular set is bounded  there are no rationally invisible repelling periodic orbits, and this fact  implies that such parameters cannot belong to ghost limbs attached to hyperbolic components (See Theorem 4, the final conjecture in \cite{Re06}, and the last section in \cite{BL14}). This  has also  been used for some of the rigidity results in \cite{Be15}. This type of results increase our current knowledge of parameter spaces. For families of transcendental functions with more than one singular value this knowledge  is currently very limited, but there is no doubt that it will undergo an important development in the next decades. We hope that  the  results and the techniques developed in this paper will be a little brick in the implementation  of this large project.

The paper is structured as follows. Section 2 contains the background about functions in class $\Brays$ and their combinatorics and presents the Separation Theorem \cite{BF15}, a key tool for the proof of the main result. It describes also fundamental tails, objects introduced in \cite{BRG17} which can be seen as intermediate steps in the construction of rays, and which despite their intricate combinatorics, have proven to be useful in the proof of several recent results. In Section 3 we give a characterization of landing of periodic rays in terms of some combinatorics of tails. 

Meanwhile, Section~\ref{sect:main theorem and proof} contains the statement and the proof Theorem~\ref{thm:main theorem for fp}, from which the Main Theorem follows, a relation which is made explicit in Section 5. 
  Section~\ref{sect:main theorem and proof} contains also a corollary (see Corollary~\ref{cor:fibers}) stating that, under our assumptions,  the union of the  dynamical   fibers (as in the definition of \cite{RSbif}) of  a rationally invisible repelling periodic orbit contains  either a singular orbit, or infinitely many singular values whose orbits belong to the fiber for more and more iterations.

\subsection*{Notation}
Let $\C$ denote the complex plane, $\D$ the unit disk. The Euclidean disk of center $z$ and radius $r$ is denoted by $\D_r(z)$.   
By a \emph{(univalent) preimage under $f^n$} of an open connected set $V$  we mean a connected component  $U$ of the set $f^{-n}(V)$ (such that $f^n:U\ra V$ is univalent). Given a set $A$ and $k\in\N$ we denote by $\{A\}^k$ the set $A\times\ldots\times A$ where the product is taken $k$ times. 

\subsection*{Acknowledgments} We are thankful to Misha Lyubich for enlightening discussions and to the Centro di Ricerca Matematica Ennio de Giorgi in Pisa for their hospitality, under the Research in Pairs program.
 
\section{Background}\label{sect:Background}
\subsection*{Tracts, fundamental domains, and dynamic rays}
  
Let $f$ be an entire transcendental function with bounded set of singular values and let $D$ be a Euclidean disk  containing $S(f)$ and $f(0)$.  The connected components of $f^{-1}(\C\setminus \ov{D})$ are called   \emph{tracts} \cite{EL92} and  are unbounded and simply connected. By definition for any tract $T$ we have that $f:T\ra\C\setminus \ov{D}$ is an unbranched covering of infinite degree. Let $\TT$ be the union of all tracts.
 It is not difficult to find  an   analytic curve $\delta\subset \C\setminus(\ov{D}\cup \TT)$ connecting $\partial D$ to $\infty$ (\cite{Rottenfusserthesis}; see also \cite[ Lemma 2.1]{BF15}). Let $\Omega:=\C\setminus (\ov{D}\cup\delta)$. The connected components of $f^{-1}(\Omega)$ are called \emph{fundamental domains}.   It is easy to see that only finitely many fundamental domains intersect $D$ and that for any fundamental domain $F$ we have that $f:F\ra\Omega$ is a biholomorphism. We denote by $\FF$ the collection of all fundamental domains, as well as their union. 
 
The structure of the dynamical plane given by tracts and fundamental domains has been useful to construct dynamic rays.  
The initial idea of finding curves in the escaping set of transcendental entire functions goes back to \cite{Fa26}, was later developed  in \cite{DT86}, \cite{DGH}, \cite{DK}, \cite{BK07}, \cite{Ba07},   \cite{RRRS} among others.

\begin{defn}[\bf Dynamic ray]
A \emph{(dynamic) ray} for $f$ is an injective curve  $G:(0,\infty)\ra I(f)$ such that:
\begin{itemize}
\item[(a)] $\underset{t\ra\infty}\lim |f^n(G(t))|=\infty\,\  \forall n\geq 0$;
\item[(b)] $\underset{n\ra\infty}\lim |f^n(G(t))|=\infty$  uniformly in $[t_0,\infty)$ for all $t_0>0$;
\item[(c)] $f^n(G(t))$ is not a critical point for any $t>0$ and $n\geq0$;
\end{itemize} 
and such that $G(0,\infty)$ is maximal with respect to these properties. If  $G(0,\infty)$ is  maximal with respect to (a) and (b) but not with respect to (c),  then we call the ray {\em broken}.
\end{defn}

Broken rays could therefore be continued if we allowed critical points and their iterated preimages to be part of the ray, as it is the case in the definition in \cite{RRRS}, where branching might occur and several rays might share one same arc. This  situation cannot happen in our setting, i.e.  rays are pairwise disjoint. 
 
A  dynamic ray $G$ is  \emph{periodic} if $f^p(G)= G$ for some $p\geq 1$, and \emph{fixed} if $p=1$. We say that a dynamic ray \emph{lands} at a point $z_0\in\C$ if  it is not broken and  $\lim G(t)= z_0$ as $t\ra 0 $. Observe that dynamic rays are allowed to land at singular values, but that broken rays are  not considered to  land. 

Recall that  $\Brays$ denotes the class of transcendental entire functions which are finite compositions of functions of finite order with bounded set of singular values. 
In \cite[Theorem 1.2]{RRRS} it is shown that for any $f\in\Brays$ and for any escaping point $z$ then $f^n(z)$ belongs to a dynamic ray for any $n$ large enough. 
 For this paper we need to take into account a combinatorial description of dynamic rays, which is implicitly contained in \cite{RRRS} and in several of the aforementioned papers but for which we use the explicit setup that has been presented in \cite{BF15}.

We say that a dynamic  ray $G$ is \emph{asymptotically contained} in a fundamental domain $F$ if $G(t)\in F$ for all $t$ sufficiently large. It is easy to see that this is always the case, as stated in the following lemma.

\begin{lem}[See e.g. Lemma 2.3 in \cite{BF15}]\label{lem:asymptotically contained} Let $f\in \Brays$. Then every dynamic ray is asymptotically contained in a fundamental domain.
\end{lem}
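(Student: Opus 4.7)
The plan is to establish asymptotic containment first in a single tract, and then, within this tract, in a single fundamental domain.

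By condition (a) with $n=1$, we have $|f(G(t))|\ra\infty$ as $t\ra\infty$, so there exists $T_0>0$ with $f(G(t))\notin\ov D$ for all $t\geq T_0$. Equivalently, $G(t)\in f^{-1}(\C\setminus\ov D)=\TT$ for every $t\geq T_0$. Since $G|_{[T_0,\infty)}$ is a connected curve contained in the disjoint open union $\TT$, it must lie inside a single tract $T$.

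It remains to show that $f(G(t))\notin\delta$ for all $t$ sufficiently large, because together with the previous step this yields $G(t)\in f^{-1}(\Omega)\cap T$ eventually, and the same connectedness argument then forces $G(t)$ to lie in a single connected component of $T\setminus f^{-1}(\delta)$, that is, in a single fundamental domain $F\in\FF$. To prove this, I would pass to logarithmic coordinates, available because $f\in\Brays$: the tract $T$ admits a conformal isomorphism $\psi$ onto a right half-plane such that $f|_T=\exp\circ\psi$. The preimages of $\delta$ in $T$ correspond under $\psi$ to countably many disjoint unbounded curves, translates of each other by multiples of $2\pi i$. The escaping property of $G$, combined with the standard Eremenko--Lyubich expansion estimate on logarithmic tracts, forces $\operatorname{Im}\psi(G(t))$ to remain bounded as $t\ra\infty$ while $\operatorname{Re}\psi(G(t))\ra+\infty$; such a curve can cross only finitely many of these $2\pi i$-translates, so $f(G(t))$ avoids $\delta$ eventually.

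The main obstacle is precisely this control on the imaginary part of the logarithmic lift. This is where the $\Brays$ hypothesis is essential, and it is closely tied to the combinatorial construction of rays in \cite{RRRS,BF15}: the external address of a ray built there has as its first entry exactly the asymptotic fundamental domain, so an alternative route is to extract the statement directly from that construction rather than reproving the imaginary-part bound from scratch.
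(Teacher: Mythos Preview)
The paper does not supply a proof of this lemma; it simply records the statement and cites \cite[Lemma~2.3]{BF15}.  So there is no in-paper argument to compare against, and the question is whether your sketch stands on its own.

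Your first step---using condition~(a) with $n=1$ to place $G([T_0,\infty))$ inside a single tract $T$---is exactly right.  The problem is the second step: you treat ``$f(G(t))\notin\delta$ eventually'' as the hard part and propose to control it via logarithmic coordinates and a bound on the imaginary part of $\psi(G(t))$.  That bound is neither obvious nor easy to extract from the Eremenko--Lyubich expansion estimate alone, and you yourself flag it as an unresolved obstacle.

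In fact the obstacle is illusory.  Recall that $\delta$ was chosen with $\delta\subset\C\setminus(\ov D\cup\TT)$, so $\delta\cap\TT=\emptyset$.  Now apply condition~(a) with $n=2$: since $|f^2(G(t))|\to\infty$, there is $T_1$ such that $f^2(G(t))\notin\ov D$ for $t\geq T_1$, i.e.\ $f(G(t))\in\TT$.  Hence $f(G(t))\notin\delta$ for all $t\geq T_1$.  Combined with your first step, $G(t)\in f^{-1}(\Omega)$ for all large~$t$, and connectedness forces it into a single fundamental domain.  No logarithmic coordinates, no finite-order hypothesis, no appeal to the RRRS construction are needed; the choice of $\delta$ outside the tracts and two applications of condition~(a) suffice.
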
 

Let us consider the symbolic space formed by all infinite sequences of fundamental domains
\[\FF^\N=\{\s=F_0 F_1 F_2\ldots\}\]
endowed with the dynamics of the shift map $\sigma:\FF^\N\ra\FF^\N$, $\sigma F_0 F_1 F_2\ldots= F_1 F_2 F_3\ldots$. For $\s=F_0F_1\ldots\in\FF^\N$, the set  $\sigma^{-1}\s$  of its  preimages is given by all sequences of the form  $F\s:=F F_0 F_1\ldots$ where $F\in\FF$.  

\begin{defn}We say that a dynamic ray $G$ has \emph{address} $\s=F_0F_1\ldots\in\FF^\N$ and we denote it by $G_{\s}$ if and only if $f^j(G_\s)$ is asymptotically contained in $F_j$ for all $j$.
\end{defn}
It follows directly from the construction in \cite{RRRS} that given an address $\s$ the ray $G_\s$, if it exists, is unique, and that for  rays which are not broken we have that 
\[f(G_\s)=G_{\sigma\s}\] and that 
\[\{f^{-1}G_\s\}=\{G_{F\s}:F\in\FF \}.\]
This implies  that a  dynamic ray $G_\s$ is periodic if and only if  $\s$ is periodic. We say that $G_\s$ has \emph{ bounded address} if $\s$ is bounded, i.e. its entries  take values over finitely many fundamental domains. 
 
 The next proposition is  \cite[Proposition 2.11]{BF15}, where it is proven using results and ideas  from \cite{DT86} and \cite{RRRS}. It previously appeared in different formulations in \cite{Re08}, \cite{BK07}.
  
\begin{prop}\label{Existence for finitely many symbols}
If $f\in\Brays$ and $\s\in\FF^\N$ is bounded  then there exists  a  unique dynamic ray $G_\s$ with address $\s$ for $f$.
\end{prop}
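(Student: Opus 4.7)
The plan is to construct $G_\s$ as a nested limit of ``ray tails'' obtained by iterated pullback along the sequence $\s$, exploiting the fact that only finitely many inverse branches come into play because $\s$ is bounded.

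First I pass to logarithmic coordinates. For each tract $T$ fix a branch of $\log$ mapping $T$ into a right half-plane, and form the logarithmic lift $\tilde f_T=\log\circ f\circ\exp$. Each fundamental domain $F\subset T$ corresponds to a substrip $\tilde F$ on which $\tilde f_T$ is a biholomorphism onto a copy of $\tilde\Omega$; let $\tilde\psi_F$ denote its inverse branch. The Eremenko--Lyubich expansion available for $f\in\Brays$ yields constants $c_0$ and $\lambda>1$ such that $|\tilde f_T'(w)|\geq\lambda$ on $\{\Re w>c_0\}$, with growth to infinity as $\Re w\to\infty$; equivalently, each $\tilde\psi_F$ is a uniform Euclidean contraction on $\tilde\Omega\cap\{\Re>c_0\}$. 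Since $\s$ takes only finitely many values in $\FF$, only finitely many of the $\tilde\psi_F$ occur in the construction, and the contraction factor is uniformly bounded by $\lambda^{-1}$.

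Next I build approximating curves. For each $n\geq 1$ let $\tilde\beta_n$ be a horizontal ray $t\mapsto t+iy_n$ lying in $\tilde F_n\cap\tilde\Omega$ for $t$ beyond a common threshold, with $y_n$ chosen inside the imaginary range of $\tilde F_n$. Define
\[
\tilde G_\s^{(n)}(t)\defeq \tilde\psi_{F_0}\circ\tilde\psi_{F_1}\circ\cdots\circ\tilde\psi_{F_{n-1}}\bigl(\tilde\beta_n(t)\bigr).
\]
To compare $\tilde G_\s^{(n)}$ and $\tilde G_\s^{(n+1)}$, observe that after the common outer block of $n$ inverse branches is peeled off, the remaining disagreement is between $\tilde\beta_n$ and $\tilde\psi_{F_n}(\tilde\beta_{n+1})$, both curves in $\tilde F_n$ at uniformly bounded Euclidean distance in the relevant region. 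The $n$ subsequent contractions then shrink this distance by $\lambda^{-n}$, so $\{\tilde G_\s^{(n)}\}$ is Cauchy in $C^0$ uniformly on compact subsets of $t$ and converges to a curve $\tilde G_\s$. Setting $G_\s\defeq\exp\circ\tilde G_\s$ yields a curve in $F_0$; commutativity of $\exp$ with the lifts gives $f^j(G_\s)\subset F_j$ asymptotically, and conditions (a) and (b) of the ray definition follow from the fact that $\Re\tilde f^j\tilde G_\s(t)\to\infty$ uniformly on compact subsets of $t$.

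Finally, maximality (c) is secured by extending the parameter interval of $\tilde G_\s$ downward as far as possible before meeting a critical point preimage under the iterates $\tilde f^j$. Uniqueness follows from the same contraction argument: any two candidates with address $\s$ must agree in their logarithmic lifts up to an error $C\lambda^{-n}\to 0$ for every $n$, and therefore coincide. The main technical obstacle is arranging the parameterizations so that the limit $\tilde G_\s$ is a non-degenerate arc defined on an unbounded interval of parameters: one must ensure that while the inverse branches contract transverse distances they stretch the parameter range enough to produce an honest curve going to infinity. This balance is delivered by the two-sided bounds on $|\tilde f_T'|$ in the Eremenko--Lyubich framework, combined decisively with the boundedness of $\s$, which ensures that both the contraction and the corresponding parameter stretching happen at a uniform rate throughout the construction.
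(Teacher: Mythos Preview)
The paper does not give its own proof of this proposition: it cites it as Proposition~2.11 of \cite{BF15}, where it is assembled from the constructions in \cite{RRRS} and \cite{DT86}. Your outline is exactly that logarithmic-coordinate construction, so the approach matches the cited literature.

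One technical point deserves correction. In the final paragraph you invoke ``two-sided bounds on $|\tilde f_T'|$'' from the Eremenko--Lyubich framework, but no upper bound on $|\tilde f_T'|$ is available there: one has only $|\tilde f_T'(w)|\to\infty$ as $\Re w\to\infty$, hence $|\tilde f_T'|\geq\lambda>1$ far enough to the right. That lower bound is what drives your Cauchy estimate and gives convergence of the $\tilde G_\s^{(n)}$, but by itself it does not force the limit curve to be unbounded nor its points to escape (conditions (a) and (b)). The ingredient that does this, and the place where the finite-order hypothesis built into $\Brays$ is actually used, is the \emph{head-start condition} of \cite{RRRS}: once a point in a logarithmic tract has sufficiently larger real part than another, this lead is preserved under $\tilde f$. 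Because $\s$ is bounded only finitely many logarithmic tracts occur, so the head-start constant is uniform along the orbit, and this is what yields $\Re\tilde f^j\tilde G_\s(t)\to\infty$ uniformly on compacta. With that substitution your sketch is the standard argument.
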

\begin{rem} A  generalization of rays for functions with not as beautiful a  geometry as functions in class $\Brays$ can be found in \cite{BRG17}. The unbounded, connected sets which take the place of rays   are called \emph{dreadlocks}. Despite not being curves, dreadlocks have the same combinatorial structure as rays. The results that are presented for rays in this section also hold for dreadlocks. 
\end{rem}
 \subsection*{The Separation Theorem}
 
Goldberg and Milnor  \cite{GM} proved that for polynomials with connected Julia set, the set of fixed rays together with their landing point separate the set of   fixed points which are not landing points of fixed rays; such points include all attracting, Siegel and Cremer parameters. 

 Goldberg-Milnor's  theorem has been generalized  to entire transcendental maps in class $\Brays$, under the assumption that periodic rays land \cite{BF15}. In order to state the theorem we need to  introduce the notion of \emph{basic regions} and \emph{interior fixed points}, following \cite{GM} and \cite{BF15}.  
Fix $p$ and assume that periodic dynamic rays land. Let $\Gamma$ denote the closed  graph formed by the rays  fixed by $f^p$ together with their landing points.  The connected components of $\C\setminus \Gamma$ are called the \emph{basic regions for $f^p$.} An \emph{interior fixed point} for $f^p$ is a periodic point for $f$ which is fixed by $f^p$ and which is not the landing point of any periodic ray which is fixed by $f^p$. Note that attracting, Siegel and Cremer points as well as rationally invisible repelling periodic points are interior periodic points for $f^p$ for all $p$, while parabolic and repelling periodic points may be interior or not depending on $p$. For example a   fixed point which is the landing point of a cycle of periodic rays of period $3$ is interior for $f$ but not for $f^3$.

\begin{thm}[Separation Theorem Entire \cite{BF15}]\label{thm:Separation Theorem Entire} Let  $f\in\Brays$, $p\in\N$ and assume  that all periodic  rays for $f$ which are fixed by  $f^p$ land. Then there are finitely many basic regions for $f^p$, and each basic region  contains exactly one interior fixed point for $f^p$, or exactly one attracting parabolic basin which is invariant under $f^p$.
\end{thm}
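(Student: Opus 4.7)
The plan is to adapt the classical Goldberg-Milnor argument from polynomial dynamics, dealing carefully with the new difficulties coming from the essential singularity of $f$ at infinity. The three main pieces are: finiteness of the graph $\Gamma$, a Denjoy-Wolff argument applied to a univalent inverse branch of a suitable iterate of $f^p$ on each basic region, and a Snail Lemma step to identify the limit in the boundary case.

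First I would prove that $\Gamma$ is a finite graph, and hence that the basic regions are finitely many. A ray fixed by $f^p$ corresponds by Proposition~\ref{Existence for finitely many symbols} to a $p$-periodic address $\underline{s}=F_0F_1\ldots F_{p-1}F_0\ldots\in\FF^\N$, and its landing point $z_0$ is a fixed point of $f^p$. Using that $D$ is bounded, that only finitely many fundamental domains meet any bounded neighborhood, and that the tract geometry together with the asymptotic containment of Lemma~\ref{lem:asymptotically contained} forces each landing point to be accessible from only finitely many fundamental directions, one concludes that only finitely many $p$-periodic addresses can support a landing ray. Hence $\Gamma$ has only finitely many rays and its complement only finitely many components.

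Next, fix a basic region $U$. Since $\Gamma$ is $f^p$-invariant, $f^p(U)$ lies in a single basic region by continuity and connectedness. Passing to a further iterate $f^{kp}$ if necessary and relabeling the permutation induced on the basic regions, I may assume there is a univalent inverse branch $\phi$ of a suitable iterate of $f^p$ mapping $U$ into itself. The region $U$ omits uncountably many points of $\C$ (at least the fixed rays together with their landing points), so it is hyperbolic, and the Denjoy-Wolff theorem applies to $\phi:U\to U$. Either $\phi$ has a fixed point $z_0\in U$, which is then a fixed point of the corresponding iterate of $f^p$ with multiplier of modulus at least $1$; since $z_0$ is not on any $f^p$-fixed ray it is an interior fixed point (attracting, Siegel, Cremer, or repelling rationally invisible), and if attracting its immediate basin is contained in $U$. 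Or every $\phi$-orbit converges to a unique point $\zeta$ in the Carath\'eodory boundary of $U$, and the Snail Lemma forces $\zeta$ to be a parabolic fixed point of $f^p$ on $\partial U$ admitting an attracting petal in $U$; this petal is a component of a parabolic basin invariant under the corresponding iterate of $f^p$.

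Uniqueness of the interior fixed point or invariant basin in $U$ would follow from uniqueness of the Denjoy-Wolff attractor of $\phi$: two such objects would force two distinct Denjoy-Wolff points, a contradiction. The main obstacle, absent from the polynomial setting, is to exclude the possibility that the Denjoy-Wolff limit of $\phi$ lies at infinity, the essential singularity of $f$. In the polynomial case infinity is a superattracting fixed point and forms its own basic region, while here every basic region is unbounded. I would rule out escape to infinity by combining the asymptotic control on fundamental domains coming from \cite{RRRS} with the landing assumption: the latter provides combinatorial barriers inside every fundamental domain that prevent $\phi$-orbits in $U$ from accumulating only at infinity without first hitting a finite boundary point of $U$ or a $\phi$-fixed point.
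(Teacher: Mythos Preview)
This theorem is not proved in the present paper; it is quoted from \cite{BF15} and used as a black box. So there is no proof here to compare against, but your sketch has substantive gaps that would need to be addressed regardless.

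The most serious error is your first step. In the transcendental setting the graph $\Gamma$ is \emph{not} finite: there are infinitely many fundamental domains, hence infinitely many $p$-periodic addresses $\overline{F_0\ldots F_{p-1}}$, and by Proposition~\ref{Existence for finitely many symbols} each one gives a ray fixed by $f^p$. Correspondingly there are infinitely many landing points (e.g.\ $e^z$ already has infinitely many fixed points). The content of the theorem is precisely that, despite $\Gamma$ having infinitely many edges and vertices, its complement has only finitely many components; your argument assumes this away. The actual proof in \cite{BF15} handles this by an argument-principle/index computation over suitably truncated regions, in the spirit of Goldberg--Milnor, not by counting rays.

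A second gap is the existence of your univalent inverse branch $\phi:U\to U$. Basic regions typically contain singular values of $f$ (indeed, the whole point of the theorem and of the applications in this paper is that they do), so there is no reason a global branch of $(f^{p})^{-1}$ is defined on $U$. The Goldberg--Milnor argument does not proceed via Denjoy--Wolff on an inverse branch; it computes the Lefschetz index contribution of each basic region and shows it equals $+1$. Your final paragraph correctly identifies escape to infinity as the new obstacle in the transcendental case, but the proposed fix (``combinatorial barriers'') is too vague to stand as an argument; in \cite{BF15} this is dealt with by careful control of the contribution near infinity to the index count.
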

 
Theorem~\ref{thm:Separation Theorem Entire} has many corollaries, including that parabolic points are always landing points of periodic dynamic rays (whose period equals the period of the attracting basins), and that  hidden components of a Siegel disk are preperiodic to the Siegel disk itself (see \cite{CR}, \cite{BF2} for an application of this fact to the existence of critical points on the boundary of Siegel Disks).  It has recently been used in \cite{BF17} to associate non-repelling cycles to singular orbits under the hypothesis that periodic rays land.

\subsection*{A couple of useful lemmas}

The following two general lemmas will be used several times  in the sequel. 
The first of them is  Lemma 2.1 in \cite{BRG17}
\begin{lem}\label{lem:finitely many preimages intersect compact sets}Let $f:\C\ra\C$ holomorphic, $U\subset\C$ be  a connected set with locally connected boundary. Then for any compact set $K\in\C$, only finitely many connected components of $f^{-1} (U)$ intersect $K$.
\end{lem}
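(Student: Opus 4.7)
The plan is to argue by contradiction. Assume that infinitely many distinct components $\{V_n\}_{n\in\N}$ of $f^{-1}(U)$ intersect $K$, and choose $z_n \in V_n \cap K$. Since $K$ is compact, pass to a subsequence with $z_n \to z^* \in K$; by continuity $w^* := f(z^*) \in \ov{U}$. I would then split into three cases according to the location of $w^*$.

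If $w^*$ lies in the interior of $U$, then an open disk around $w^*$ is contained in $U$ and, by continuity, some connected open neighborhood $D$ of $z^*$ satisfies $f(D) \subset U$. Hence $D$ lies in a single component of $f^{-1}(U)$, and all $V_n$ with $z_n \in D$ must coincide with this common component, contradicting distinctness. If instead $w^* \notin \ov{U}$, a neighborhood of $z^*$ maps outside $\ov{U}$, contradicting $z_n \to z^*$ with $z_n \in f^{-1}(U)$. The remaining and main case is $w^* \in \partial U$.

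For this case, the key topological input I would invoke is that, because $\partial U$ is locally connected at $w^*$, there exists $\epsilon_0 > 0$ such that $U \cap \D_\epsilon(w^*)$ has only finitely many connected components for every $\epsilon < \epsilon_0$. Granting this, let $k$ be the local degree of $f$ at $z^*$ and fix $\epsilon < \epsilon_0$; I would choose a connected open neighborhood $D$ of $z^*$ so that $f \colon D \to \D_\epsilon(w^*)$ is a proper branched cover of degree $k$ ramified at most at $z^*$. Each component of $U \cap \D_\epsilon(w^*)$ then has at most $k$ preimages in $D$, so $f^{-1}(U) \cap D$ has only finitely many components, say $m$. For $n$ large enough $z_n \in D$, so each $V_n \cap D$ is a nonempty open subset of $f^{-1}(U) \cap D$; since the $V_n$ are pairwise disjoint they must contain pairwise disjoint subcollections of the $m$ components, which is impossible for infinitely many $n$.

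I expect the main obstacle to be justifying the bracketed topological fact that $U \cap \D_\epsilon(w^*)$ has only finitely many components for small $\epsilon$. My plan here is again by contradiction: if this failed for every $\epsilon>0$, one could extract a sequence of distinct components $C_n$ of $U \cap \D_{1/n}(w^*)$ of diameter tending to $0$ accumulating at $w^*$. For $n$ large, $C_n$ is compactly contained in $\D_{1/n}(w^*)$, forcing $\partial C_n \subset \partial U$; these would form pairwise disjoint, nonempty, compact subsets of $\partial U$ collapsing onto $w^*$. A connected neighborhood of $w^*$ in $\partial U$ of sufficiently small diameter, supplied by local connectedness of $\partial U$ at $w^*$, would then meet infinitely many $\partial C_n$ without containing any of them entirely, yielding the desired contradiction.
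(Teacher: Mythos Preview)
The paper does not give its own proof of this lemma: it is quoted verbatim as Lemma~2.1 of \cite{BRG17}, so there is nothing to compare your argument to inside this paper. Your overall scheme (pass to a limit $z^*$, split according to whether $w^*=f(z^*)$ is in $\operatorname{int} U$, in $\C\setminus\overline U$, or in $\partial U$, and in the last case exploit that $f$ is a local branched cover of degree $k$) is exactly the natural one, and the two easy cases are fine --- though note that the case $w^*\notin\overline U$ is vacuous, since $w^*=\lim f(z_n)$ with $f(z_n)\in U$.

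The real issue is your justification of the key topological input, namely that local connectedness of $\partial U$ at $w^*$ forces $U\cap\D_\epsilon(w^*)$ to have only finitely many components for some $\epsilon>0$. Your sketch has several gaps. First, from ``$U\cap\D_{1/n}(w^*)$ has infinitely many components for every $n$'' you cannot simply choose components $C_n$ that are compactly contained in $\D_{1/n}(w^*)$: it may well be that \emph{every} component of $U\cap\D_{1/n}(w^*)$ meets $\partial\D_{1/n}(w^*)$, in which case no $\partial C_n$ is contained in $\partial U$ and your subsequent argument does not get off the ground. Second, even when $\partial C_n\subset\partial U$, the sets $\partial C_n$ need not be pairwise disjoint (boundaries of disjoint open sets can share points), so the picture of ``pairwise disjoint compact subsets of $\partial U$ collapsing onto $w^*$'' is not yet justified. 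Third, the final step --- a small connected neighbourhood of $w^*$ in $\partial U$ meets infinitely many $\partial C_n$ without containing any entirely --- is not, as written, a contradiction: a connected set can meet infinitely many other sets. You would need to argue that such a connected neighbourhood $W\subset\partial U$ actually \emph{separates} points of some $C_n$ from points of some $C_m$ inside $\D_\epsilon(w^*)$, or otherwise exploit planarity, and this requires more care.

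In short: the reduction to the boundary case and the use of the local branched-cover structure are correct and efficient, but the topological lemma you isolate, while true, is where all the work lies, and your sketch of it does not yet close. If you want a self-contained proof you should either look up the argument in \cite{BRG17}, or give a careful planar-topology argument showing that infinitely many components of $U\cap\D_\epsilon(w^*)$ force infinitely many disjoint connected pieces of $\partial U$ near $w^*$ that cannot all be captured by a single small connected neighbourhood of $w^*$ in $\partial U$.
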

 
 \begin{lem}[Forward invariant boundary]\label{lem:forward invariant boundary} Let $f$ be holomorphic, $B$ be a region whose boundary is forward invariant, $V$ be an open subset of  $\C$ which does not intersect the boundary of $B$. Then for any connected component $U$ of $f^{-1}(V)$ we have that $U$ is either contained in $B$ or in $\C\setminus \ov{B}$.
 \end{lem}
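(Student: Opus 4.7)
The plan is to argue by contradiction on intersection with $\partial B$. The key observation is that the forward invariance hypothesis $f(\partial B)\subset \partial B$ combined with $V\cap \partial B=\emptyset$ prevents any preimage component from touching $\partial B$.

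First I would suppose, for contradiction, that some $z\in U$ lies in $\partial B$. Since $U\subset f^{-1}(V)$, we have $f(z)\in V$. On the other hand, forward invariance of $\partial B$ gives $f(z)\in f(\partial B)\subset \partial B$. This contradicts the standing assumption $V\cap \partial B=\emptyset$. Hence $U\cap \partial B=\emptyset$, i.e.\ $U\subset \C\setminus \partial B = B\cup (\C\setminus \ov{B})$.

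Next I would invoke connectedness of $U$. The sets $B$ and $\C\setminus \ov{B}$ are both open (the first by the assumption that $B$ is a region, the second as the complement of a closed set) and disjoint. Any connected subset of their union must therefore lie entirely in one of them, which is exactly the claimed dichotomy.

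There is essentially no obstacle here; the only subtle point is checking that ``$B$ region'' supplies openness of $B$, so that $B$ and $\C\setminus\ov{B}$ together form a separation of $\C\setminus\partial B$ into two disjoint open pieces. Note that $\C\setminus\ov{B}$ need not be connected, but this is irrelevant: the connectedness of $U$ together with the fact that $B$ is one fixed open set and $\C\setminus\ov{B}$ is another fixed open set disjoint from $B$ suffices to force $U$ into one of the two.
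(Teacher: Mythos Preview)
Your proof is correct and follows essentially the same approach as the paper: both argue by contradiction that $U\cap\partial B\neq\emptyset$ would force $V\cap\partial B\neq\emptyset$ via the forward invariance $f(\partial B)\subset\partial B$. You merely spell out the concluding connectedness step (that $U$ connected and $U\subset B\cup(\C\setminus\ov B)$ forces $U$ into one piece), which the paper leaves implicit.
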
 
 
 \begin{proof}
 Otherwise, $U\cap\partial B\neq \emptyset$. Since $f(\partial B)\subset \partial B$
 it follows that $V\cap\partial B\neq \emptyset$ contradicting the hypothesis. \end{proof}

\section{Fundamental tails for a repelling periodic orbit}\label{sect:Fundamental tails for a repelling periodic orbit}
 
Fundamental tails are relatively new objects introduced in \cite{BRG17} for functions with bounded postsingular set. They already found application  in \cite{FagellaTails}. Fundamental tails are     preimages of fundamental domains under finitely many iterates, and hence are nice open sets. Loosely speaking they  can be thought of as approximation of  rays, which  despite being topologically curves, do not necessarily have nice geometric properties. In what follows we give a precise definition of tails under weaker assumptions than in the original setting.

 Let $f\in\Brays$ whose periodic rays land.  
Let $z_0$ be a  repelling periodic point of minimal period $m$ and  let $\XX=\{z_0,z_1\ldots z_{m-1}\}$ be its orbit labeled so that $f(z_i)=z_{i+1}$ with indices taken modulo $m$. Let $p$ be a multiple of $m$. Suppose that $z_0$ is an interior fixed point for $f^p$,  and consider the basic regions $B_0,\ldots B_{m-1}$ for $f^p$ which contain the elements of $\XX$, namely, $z_i\subset B_i$ for $i=0,\ldots, m-1$. indices of the basic regions $\{B_i\}$ will also be taken modulo $m$.

  Let $B$ denote    the union of the $B_i$.  Since  there are only finitely many  basic regions for $f^p$ (see Theorem~\ref{thm:Separation Theorem Entire}),  the boundary of $B$ contains finitely  many pairs of   rays which are fixed under $f^p$, together with their landing points.  Let  $D,\delta$    as in Section~\ref{sect:Background}. Let $\FF_B$ be the collection of fundamental domains intersecting $B$ for $D,\delta$.

Fix   $r>0$ such  that   $r>|z_i|$ for all $i\in \XX$ and let $D_r\supset D$ be the Euclidean disk of radius $r$ centered at $0$ . Let $\delta_r\subset\delta$ be the unbounded connected component of $\delta\setminus D_r$.  
For any fundamental domain $F\in\FF_B$ let $\Bacc{F}$  be the unique unbounded connected component of $F\cap B\cap f^{-1}(\C\setminus( \ov{D_r}\cup\delta_r))$. This is the same as saying that we are considering   $\Bacc{F}$ to  be the unique unbounded connected component of the fundamental domains obtained by using $D_r, \delta_r$ instead of $D,\delta$, intersected with $B$.

 \begin{figure}[hbt!] 
\begin{center}
\def\svgwidth{0.3\textwidth}
\begingroup%
  \makeatletter%
    \setlength{\unitlength}{\svgwidth}%
  \makeatother%
  \begin{picture}(1,1.28442043)%
    \put(0,0){\includegraphics[width=\unitlength]{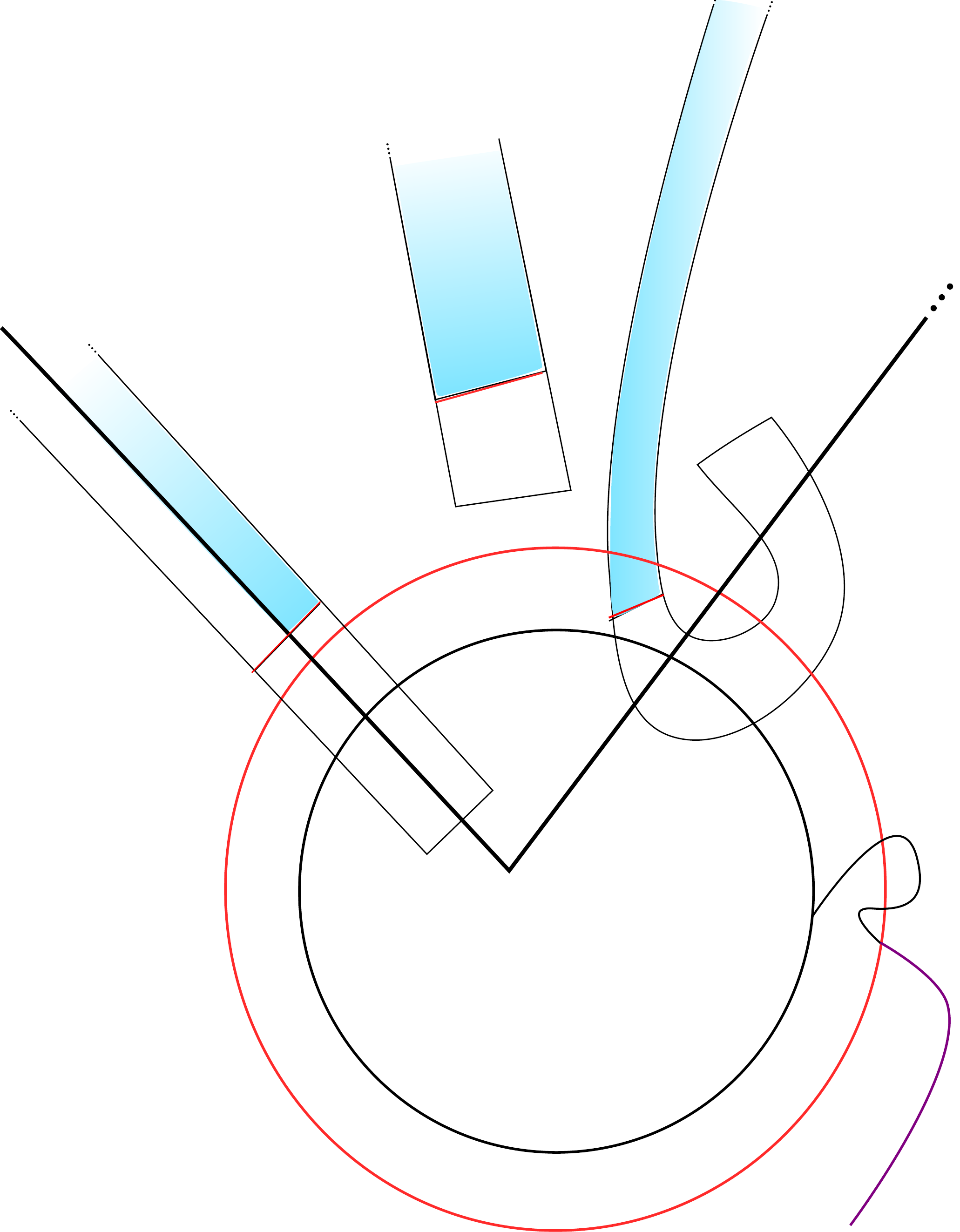}}%
    \put(0.65333534,0.26329139){\color[rgb]{0,0,0}\makebox(0,0)[lb]{\small{$D$}}}%
    \put(0.13,0.25){\color[rgb]{0,0,0}\makebox(0,0)[lb]{\small{$D_r$}}}%
    \put(0.90632464,0.68812244){\color[rgb]{0,0,0}\makebox(0,0)[lb]{\small{$F_1$}}}%
    \put(0.42659965,0.79075016){\color[rgb]{0,0,0}\makebox(0,0)[lb]{\small{$F_2$}}}%
    \put(0.22850426,0.80507033){\color[rgb]{0,0,0}\makebox(0,0)[lb]{\small{$F_3$}}}%
      \put(1,0){\color[rgb]{0,0,0}\makebox(0,0)[lb]{\small{$\delta$}}}%
  \end{picture}%
\endgroup%

\end{center}
\caption{\small Definition of $\Bacc{F}$ when $B$ is a single basic region. The region $B$ is shown  
  together with the disk $D$ used to define fundamental domains. For simplicity only $3$ fundamental domains $F_1, F_2, F_3$ are shown.  The circle of radius $r$ and its preimages inside  $F_1, F_2, F_3$ are in red. Shaded in light blue are the tails of level 1 for the disk $D_R$ corresponding to $F_1, F_2, F_3$. The curve $\delta_r$ is in purple.}
\label{fig:recedingFD}
\end{figure}

For any $F\in\FF_B$ we have that    $\Bacc{F}$ does not intersect $\partial B$, so by Lemma~\ref{lem:forward invariant boundary} for any $n$ we have that  any  connected component of   $f^{-n}(\Bacc{F})$ is  contained in  either $B$ or $\C\setminus\ov{B}$.

\begin{defn}[Fundamental tails for $z_0$]\label{defn:fundamental tails}
Let $z_0,m, $
The set of tails of level $1$, that we denote by  $\TT_1=\TT_1(r)$, is the set of $\Bacc{F}$ where $F\in\FF_B$ and $\Bacc{F}\subset B_0$.
 Since $D_r\supset D$, we have that $f|_\tau:\tau\ra\C\setminus (\ov{D_r}\cup\delta_r)$ is univalent for any $\tau\in\TT_1$. 
 We define tails of level $n$ by induction. Suppose that we have defined the set  $\TT_{n}$ of  tails of level $n$ and let us define the set  $\TT_{n+1}$ of  tails of level $n+1$.
  
 We say that $\eta$ is a tail of level $n+1$ (for $z_0$, $r$) if it satisfies the following.
 \begin{itemize}
\item $\eta$ is a connected component of  $f^{-m}(\tau)$ for some $\tau\in \TT_n$;
 \item $\eta\subset B_0$ and $f^m:\eta\ra\tau$ is univalent; 
 \item $f^i(\eta)\subset B_i$ for $i=0,\ldots, m-1$.
 \end{itemize}
\end{defn}

It follows that if $\eta\in\TT_n$, $f^{m(n-1)}:\eta\ra\tau$ is univalent, where $\tau$ is some element of $ \TT_1$.

The definition above depends on the choice of $z_0, p $ and  $r$. The point $z_0$, its period $m$, and the period $p$ of the basic regions are fixed throughout the section, while $r$ may vary.   
 With this definition all tails of all levels are contained in the basic region $B_0$ which contains $z_0$, and have the following properties.
\begin{lem}\label{lem:tails for z0 asymptotically contained} Let $\tau$ be a fundamental tail of level $n$ for $z_0,r$. Then:
\begin{itemize}
\item $\tau$ is asymptotically contained in a unique fundamental domain  $F_0\in\FF_B$ which intersects $B_0$, that is, there is a unique fundamental domain $F_0\in\FF_B$ which intersect $B_0$ and such that $\tau\cap F_0$ is unbounded.
\item For $j=1,\ldots, m(n-1)$, $f^j(\tau)$ is asymptotically contained in a fundamental domain $F_j$  which intersects $B_{j}$, that is, there is a unique fundamental domain $F_j$ which intersects $B_j$ and such that $\tau\cap F_j$ is unbounded.
\end{itemize}
\end{lem}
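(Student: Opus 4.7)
The plan is to prove this by induction on the level $n$. The base case $n=1$ is immediate from Definition~\ref{defn:fundamental tails}: a level-one tail is of the form $\tau=\Bacc{F_0}$ with $F_0\in\FF_B$ and $\Bacc{F_0}\subset B_0$, so $\tau\subset F_0\cap B_0$. Hence $\tau\cap F_0=\tau$ is unbounded, while $\tau\cap F=\emptyset$ for every other fundamental domain $F$ by disjointness; the second bullet is vacuous because $m(n-1)=0$.

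For the inductive step, let $\eta\in\TT_{n+1}$ be a connected component of $f^{-m}(\tau)$ for some $\tau\in\TT_n$, with $f^m|_\eta$ univalent and $f^i(\eta)\subset B_i$ for $i=0,\ldots,m-1$. For $j\in\{m,\ldots,mn\}$ I simply note that $f^j(\eta)=f^{j-m}(\tau)$ and invoke the inductive hypothesis applied to $\tau$, which yields the desired unique fundamental domain $F_{j-m}^\tau\in\FF_B$ intersecting $B_{j\bmod m}$. For $j\in\{0,\ldots,m-1\}$ I perform a downward induction on $j$ starting from $j=m$ (just handled); the key step asserts that if $f^{j+1}(\eta)$ is asymptotically contained in a unique $F_{j+1}\in\FF_B$, then $f^j(\eta)$ is asymptotically contained in a unique $F_j\in\FF_B$ intersecting $B_j$.

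To prove this key step, I first note that $f|_{f^j(\eta)}:f^j(\eta)\to f^{j+1}(\eta)$ is a biholomorphism (univalent because $f^m|_\eta$ is, surjective by the connected-component definition), so that $f^j(\eta)$ is unbounded since $f$ is entire and $f^{j+1}(\eta)$ is unbounded. Because fundamental domains sit inside tracts and $\delta\subset\C\setminus(\ov D\cup\TT)$, we have $F_{j+1}\cap\delta=\emptyset$, and $F_{j+1}\cap\ov D$ is bounded; hence the unbounded end of $f^{j+1}(\eta)\cap F_{j+1}$ is contained in $\Omega=\C\setminus(\ov D\cup\delta)$. Pulling back under the inverse branch $g=(f|_{f^j(\eta)})^{-1}$ places the corresponding end of $f^j(\eta)$ into $f^{-1}(\Omega)=\bigsqcup_{F\in\FF}F$, and by connectedness of this end combined with disjointness of fundamental domains it must lie in a single $F_j\in\FF$. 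Since $f^j(\eta)\subset B_j$, we have $F_j\in\FF_B$ and $F_j\cap B_j\neq\emptyset$; uniqueness of $F_j$ is obtained by applying the same argument to any other candidate and invoking uniqueness of $F_{j+1}$ via the bijectivity of $f|_{f^j(\eta)}$.

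The main technical obstacle is ensuring that the inverse branch $g$ is proper at infinity, that is, that sequences going to infinity in $f^j(\eta)$ are precisely the images under $g$ of sequences going to infinity in $f^{j+1}(\eta)$. This is what allows one to transfer the ``unbounded end'' structure from $f^{j+1}(\eta)$ to $f^j(\eta)$ faithfully. For $f\in\Brays$ this is guaranteed by the class-$\Brays$ tract geometry: inside a tract $z_n\to\infty$ forces $f(z_n)\to\infty$. The natural way to feed this into the induction is to strengthen the statement so that, at each level, the tail $f^j(\eta)$ is not only asymptotically in $F_j$ but in fact sits inside the single tract containing $F_j$; this stronger property can be propagated inductively and secures the required properness.
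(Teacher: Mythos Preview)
The paper's own proof is literally the sentence ``The proof follows from the definition of fundamental tails''; your inductive unwinding of that definition is exactly what is intended and is essentially correct.

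One minor wrinkle worth flagging: your equality $f^j(\eta)=f^{j-m}(\tau)$ for $j\ge m$ presupposes $f^m(\eta)=\tau$, which is a different (and less trivial) surjectivity than the tautological one you later invoke for $f:f^j(\eta)\to f(f^j(\eta))=f^{j+1}(\eta)$. This is easily patched either by reading the definition's ``$f^m:\eta\to\tau$ univalent'' as ``biholomorphic'' (the paper's implicit convention, consistent with how it treats $f:F\to\Omega$), or by working with the inclusion $f^j(\eta)\subset f^{j-m}(\tau)$ and the fact that connected components of $(f^m)^{-1}(\tau)$ are unbounded whenever $\tau$ is. Your identification of the properness-at-infinity issue and its resolution via the tract geometry of class~$\Brays$ is exactly the substantive content hiding behind the paper's one-liner.
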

\begin{proof}
The proof follows from the definition of fundamental tails.
\end{proof}
Lemma~\ref{lem:tails for z0 asymptotically contained} gives a way  to dynamically associate a finite sequence of fundamental domains (called an \emph{address}) to each  tail $\tau \in \TT_n$,  similarly to the way in which we associate addresses to dynamic rays. Compare with Definition 3.7 and 3.8 in \cite{BRG17}. 
 
\begin{defn}[Addresses of fundamental tails]\label{defn:addresses of tails} Let $\tau$ be a fundamental tail of level $n$  and let $\s=F_0F_1\ldots F_{m(n-1)}$ be the sequence of fundamental domains given by Lemma~\ref{lem:tails for z0 asymptotically contained}. We say that $\s$ is the (finite) \emph{address} of $\tau$. Observe that $\s$ has length  $\ell_n=m(n-1)+1$.
 When it exists,  we define $\tau_n(\s)$   to be the unique tail of level $n$ and address $\s$. Uniqueness is given by the fact that  for each fundamental domain $F$ we have that  $f: F\ra \compl$ is a homeomorphism.
\end{defn}

At first glance one may expect that all sequences whose elements are fundamental domains intersecting $B$ should be realized. However, some of these fundamental domains are only partially contained in $B$, and this prevents the existence of some  tails. One can characterize precisely the set of addresses which are realized but this is not needed for our purposes.

The set of addresses of tails of level $n$ is contained in  $\{\FF_B\}^{\ell_n}=\FF_B\times \FF_B^{m(n-1)}$.  Consider infinite  sequences in $\{\FF_B\}^\N$. There is a natural projection 
\[\pi_n:\{\FF_B\}^\N\ra \{\FF_B\}^{\ell_n}\]
 which maps an infinite sequence $\s$ to the finite address consisting of its  first $\ell_n$ entries. In this sense, whenever it exists,  we can define the tail of level $n$ and address $\s\in\{\FF_B\}^\N$ as the tail of level $n$ and address $\pi_n(\s)$. We refer to elements in $\{\FF_B\}^\N$ as (infinite) addresses, despite the fact that not all of them are realized as fundamental tails of arbitrarily high levels.

The set of admissible addresses is denoted by 
\begin{equation}\label{eqtn:AB}
\AA_B=\AA_B(z_0, p, r):=\{ \s\in\{\FF_B\}^\N: \text{ the tail $\tau_n(\s)$ is well defined for all $n$}\}.
\end{equation} 

\begin{figure}[hbt!] 
\begin{center}
\def\svgwidth{0.6\textwidth}
\begingroup%
  \makeatletter%
    \setlength{\unitlength}{\svgwidth}%
  \makeatother%
  \begin{picture}(1,0.60525985)%
    \put(0,0){\includegraphics[width=\unitlength]{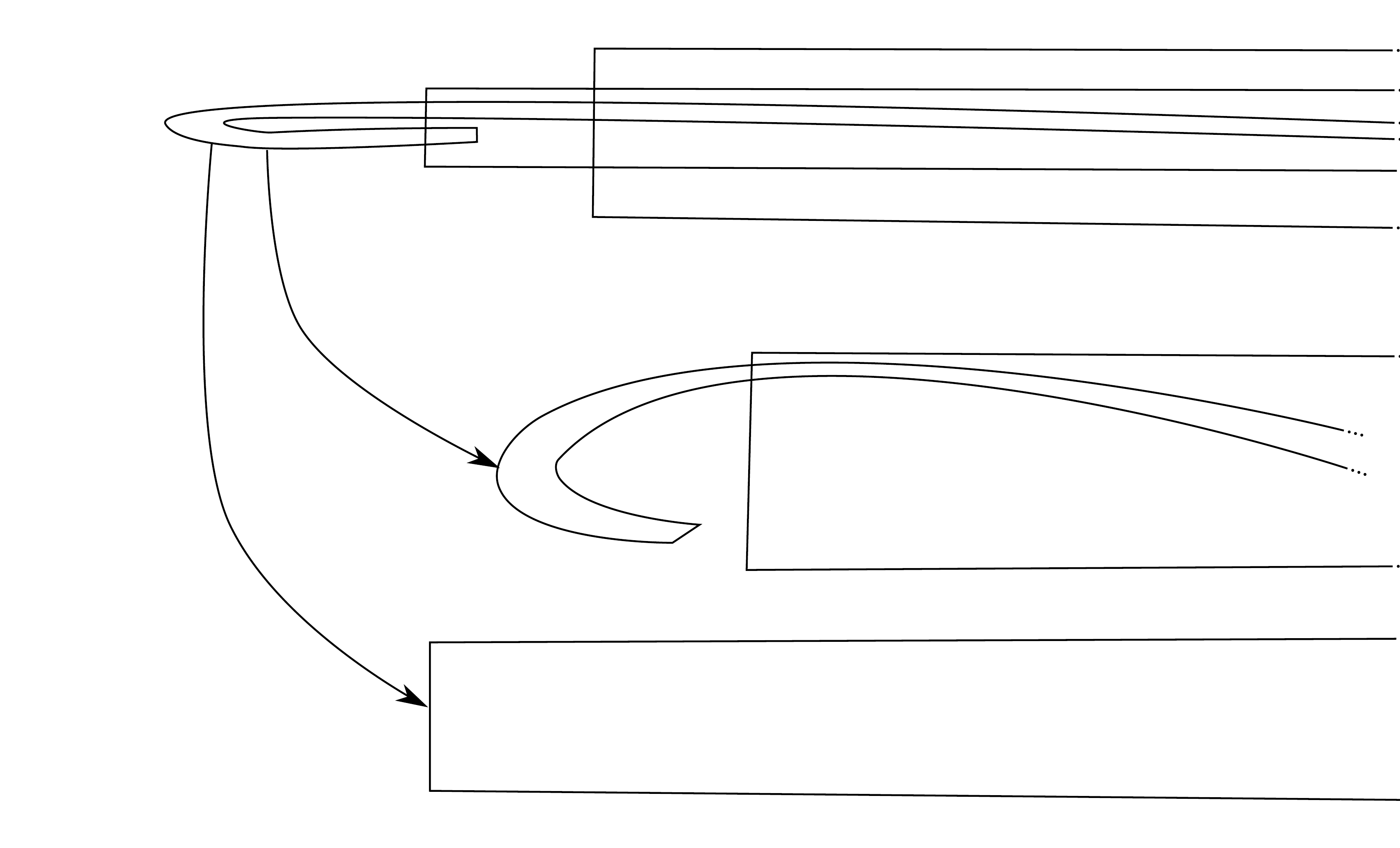}}%
    \put(-0.004,0.5){\color[rgb]{0,0,0}\makebox(0,0)[lb]{\small{$\tau_{n+1}(\s)$}}}%
    \put(0.3,0.45){\color[rgb]{0,0,0}\makebox(0,0)[lb]{\small{$\tau_n{\s}$}}}%
    \put(0.38,0.35){\color[rgb]{0,0,0}\makebox(0,0)[lb]{\small{$\tau_n(\sigma^m{\s})$}}}%
    \put(1.1,0.5){\color[rgb]{0,0,0}\makebox(0,0)[lb]{\small{$F_0$}}}%
    \put(1.1,0.25){\color[rgb]{0,0,0}\makebox(0,0)[lb]{\small{$F_{m}$}}}%
    \put(1.1,0.05){\color[rgb]{0,0,0}\makebox(0,0)[lb]{\small{$F_{m(n-1)}$}}}%
    \put(-0.002,0.22929324){\color[rgb]{0,0,0}\makebox(0,0)[lb]{\small{$f^{m(n-1)}$}}}%
    \put(0.23861859,0.36243614){\color[rgb]{0,0,0}\makebox(0,0)[lb]{\small{$f^m$}}}%
  \end{picture}%
\endgroup%

\end{center}
\caption{\small  
Let $\s=F_0F_1\ldots F_n\ldots\in\AA_B$ be an infinite address. The fundamental tail $\tau_{n+1}(\s)$ is contained in $\tau_{n}(\s)$ for points with large modulus. The fundamental tail $\tau_{n+1}(\s)$ is mapped to $\tau_{n}(\sigma^m\s)$ under $f^m$.
}
\label{fig:NestedTails}
\end{figure}

\begin{defn}[Pullback along an address]\label{defn:pullback along an address fp} Let $r>0$ and consider the fundamental tails  $\TT_1$ of level 1 for $r$. Let $\s=F_0F_1\ldots F_{\ell_n-1}\in\{\FF_B\}^{\ell_{n}}$ such that the tail $\tau_n(\s)$ exists for some $n$.   Let  $\zeta\in B_0$. 
When it exists,  we define $\zeta_n(\s)$ to be the unique point in $f^{-nm}(\zeta)\cap \tau_n(\s)$.  
\end{defn} 

Let  $\tau_n(\s)$ be a tail of level $n$. The map  $f^{m(n-1)}:\tau_n(\s)\ra\tau_1(\sigma^{m(n-1)}\s)$ is univalent, hence $f^{mn}$ is a univalent map from  $\tau_n(\s)$ to $\C\setminus (\ov{D_r}\cup\delta_r)$ (not necessarily surjective).  So, if $\zeta$ does have a preimage in $\tau_n(\s)$, such a preimage is unique.

\begin{lem}\label{lem:images of tails fp} A point $z\in\tau_n(\s)$ for some $n\geq1$ and some $\s\in\{\FF_B\}^{\ell_n}$ if and only if    $f^{jm}(z)\in \tau_{n-j}(\sigma^{mj}(\s))\subset B_0$ for all $j=1\ldots n-1$. 
\end{lem}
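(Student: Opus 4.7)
The plan is to argue by induction on the level $n\geq 1$, unwinding the recursive structure of Definition~\ref{defn:fundamental tails}.

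For ($\Rightarrow$), I would simply unwind the definition: by construction $\tau_n(\s)$ is a connected component of $f^{-m}(\tau_{n-1}(\sigma^m\s))$ on which $f^m$ is univalent with image exactly $\tau_{n-1}(\sigma^m\s)\subset B_0$. Applied to $z$ and iterated $n-1$ times, this yields $f^{jm}(z)\in\tau_{n-j}(\sigma^{jm}\s)\subset B_0$ for every $j=1,\ldots,n-1$.

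For ($\Leftarrow$), the hypothesis at $j=1$ gives $f^m(z)\in\tau_{n-1}(\sigma^m\s)$, so $z$ lies in some connected component $U$ of $f^{-m}(\tau_{n-1}(\sigma^m\s))$ and the task is to show $U=\tau_n(\s)$. Since $\partial B$ is forward invariant under $f$ and $\tau_{n-1}(\sigma^m\s)\subset B_0$ does not meet $\partial B$, Lemma~\ref{lem:forward invariant boundary} forces $U$ to lie either in a single basic region or entirely in $\C\setminus\ov B$; the standing position of $z$ inside $B_0$ then pins $U\subset B_0$. Iterating the same reasoning on each $f^i(z)$ for $i=1,\ldots,m-1$, using once more the forward invariance of $\partial B$, gives $f^i(U)\subset B_i$. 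Because $f$ restricts to a biholomorphism $F\to\Omega$ on each fundamental domain $F\in\FF_B$, the composition $f^m|_U$ is univalent onto $\tau_{n-1}(\sigma^m\s)$, so $U$ satisfies every axiom of Definition~\ref{defn:fundamental tails} for a level-$n$ tail. Feeding the remaining hypotheses $f^{jm}(z)\in\tau_{n-j}(\sigma^{jm}\s)$ for $j=2,\ldots,n-1$ into the inductive hypothesis applied to $f^m(z)$ identifies the address of $U$ as $\s$, and the uniqueness clause in Definition~\ref{defn:addresses of tails} yields $U=\tau_n(\s)$, so $z\in\tau_n(\s)$.

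The main obstacle I anticipate is the branch-selection step in the converse: the set $f^{-m}(\tau_{n-1}(\sigma^m\s))$ breaks into many connected components, one per preimage branch of $f^m$, and the hypotheses must single out the one whose asymptotic address is $\s$. The combination of Lemma~\ref{lem:forward invariant boundary} (turning the forward-invariant boundary $\partial B$ into a per-component dichotomy) with the per-fundamental-domain biholomorphism property of $f$ is what upgrades the coarse ``the iterates land in these tails'' data into the precise identification $U=\tau_n(\s)$; once this combinatorial selection is established, uniqueness of tail addresses closes the argument.
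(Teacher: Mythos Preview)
The paper's own proof is the one-line remark ``This follows from the definition of fundamental tails,'' so your write-up is already far more detailed than what the authors provide. Your forward direction is exactly right and is the content that is actually used later in the paper.

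There is, however, a genuine gap in your converse argument. You write ``the standing position of $z$ inside $B_0$ then pins $U\subset B_0$'' and later ``iterating the same reasoning on each $f^i(z)$ for $i=1,\ldots,m-1$ \ldots gives $f^i(U)\subset B_i$''. Neither $z\in B_0$ nor $f^i(z)\in B_i$ for the intermediate $i$ is part of the stated hypothesis: the right-hand side of the lemma only constrains the iterates $f^{jm}(z)$ at multiples of $m$. Without those extra constraints the component $U\subset f^{-m}(\tau_{n-1}(\sigma^m\s))$ containing $z$ is genuinely undetermined---there are infinitely many such components, only one of which is $\tau_n(\s)$---so the converse as literally stated cannot be recovered from the hypotheses you list. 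Your univalence step has the same issue: the fact that $f|_F$ is a biholomorphism for each $F\in\FF_B$ does not by itself force $f^m|_U$ to be univalent, since neither $U$ nor its intermediate images need sit inside a single fundamental domain.

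In short, the forward implication is the substance of the lemma and you have it; the converse either needs the tacit extra hypotheses $f^i(z)\in B_i$ for all $0\le i\le m-1$ (which do hold in every application in the paper) or should be read as a somewhat informal companion statement, which is consistent with the authors' one-line justification.
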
 
 \begin{proof}
 This follows from the definition of fundamental tails.
 \end{proof}
 
We now show that up to taking $r$ large enough,   all possible fundamental tails of all levels exist unless $B=\bigcup_i B_i$ contains singular orbits which follow the itinerary of $z_0$ with respect to the partition into basic regions for $f^p$.

Recall that indices of the orbit of $z_0$ as well as indices of the basic regions containing them  are taken modulo $m$, and  that $f^j(z_0)\in B_j$ for all $j\geq0$.

Let $S_B(f)$ be the set of singular values which are contained in $B$, that is 
$$S_B(f)=S(f)\cap B.$$ For every  $s\in S_B(f)$ let $i(s)\in\{0,\ldots,m-1\}$ be such that $s\in B_{i(s)}$, and 
let 
$n(s)$ maximal be such that for all $0\leq j\leq n(s)$ we have that $f^j(s)\in B_{i(s)+j}$. Let
 \begin{equation}\label{eqtn:def of PB}\PP_B(f):=\ov{\bigcup_{s\in S_B(f)}\big(\bigcup_{ n\leq n(s)} f^n(s)\big)}.\end{equation}
Observe that $\PP_B(f)$ is smaller than $\PP(f)\cap B$, and that it  is forward invariant in the sense that
\begin{equation}\label{eqtn:forward invariance}
f(\PP_B(f)\cap B_i)\cap B_{i+1}\subset \PP_B(f).
\end{equation}
\begin{prop}[Existence of fundamental tails]\label{prop: eventually either univalent fp}
 Let $f\in\Brays$ such that periodic rays land. Let $\mathcal{X}=\{z_0,\ldots,z_{m-1}\}$ be a   repelling periodic orbit of  period $m$ and let $p$ be any multiple of $m$.  Suppose that $f(z_i)=z_{i+1} \mod m$. Let $\{B_i\}_{i=0\ldots m-1}$ be the basic regions for $f^p$ containing the elements of $\mathcal{X}$. Then  at least one of the following is true.
\begin{enumerate}
 \item[\rm (1)]\label{item:sv trapped fp} There exists a singular value $s$ for $f$ such that $s\in \bigcup_{i=0}^{m-1} B_i$,   say $s\in B_{i(s)}$, and such that for all $n\geq0$ we have that $f^{n}(s)\in B_{i(s)+n}$.  
\item[\rm (2)]\label{item:infinitely many sv trapped fp}   There are infinitely many singular values $s_j$ for $f$ in at least one of the basic regions $B_i$, say $  B_{0}$, and  a sequence $ n_j \ra  \infty$ as $j\ra\infty$ such that for all $n\leq n_j $ we have that  $f^{n}(s_j)\in B_{n}$.
\item[\rm (3)] The set $\PP(f)$ is bounded, and there exists $r>0$ such that all tails of all levels  are well defined for $z_0,p,r$. More precisely, this means the following. Let $\tau\in \TT_1(r)$, $n\geq0$, and $\tilde{\tau}$  a connected component of $f^{-nm}(\tau)$ for which $f^j(\tilde{\tau})\subset B_j$ for  $j\leq nm$. Then $f^{mn}:\tilde{\tau}\ra\tau$ is univalent.
\end{enumerate}
\end{prop}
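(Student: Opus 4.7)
The plan is to prove (3) by contrapositive: assuming neither (1) nor (2) holds, I bound the relevant part of the postsingular set, pick a sufficiently large $r$, and construct all tails by induction on the level.

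\textbf{Step 1 (Boundedness).} Failure of (1) means $n(s) < \infty$ for every $s \in S_B(f)$. Failure of (2) means that on each basic region $B_i$, the values $n(s)$ stay bounded as $s$ ranges over $S_B(f) \cap B_i$ (otherwise one could extract a sequence realizing (2)). Taking the maximum over $i = 0, \ldots, m-1$ gives a uniform $N$ with $n(s) \leq N$ for every $s \in S_B(f)$, so
\[
\bigcup_{s \in S_B(f)} \bigcup_{n \leq n(s)} f^n(s) \;\subset\; \bigcup_{n \leq N} f^n(S_B(f))
\]
is bounded, yielding boundedness of $\PP_B(f)$ (and of $\PP(f)$ under the bounded-$S(f)$ convention implicit in this part of the paper). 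Choose $r > 0$ so that $D_r$ contains $\PP_B(f) \cup \bigcup_{j=0}^{m-1} f^j(S_B(f)) \cup \mathcal{X} \cup D$.

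\textbf{Step 2 (Level $1$).} For each unbounded fundamental domain $F \in \FF_B$ with $F \subset B_0$, the set $F \cap B \cap f^{-1}(\C \setminus (\overline{D_r} \cup \delta_r))$ has an unbounded component $\Bacc{F}$. Since $f : F \to \Omega$ is a biholomorphism, $f$ is univalent on $\Bacc{F}$. These are the desired tails of level $1$.

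\textbf{Step 3 (Induction).} Assume all tails of level $n$ exist. Given an address $\s = F_0 F_1 \cdots F_{mn}$ of length $\ell_{n+1}$, set $\tau := \tau_n(\sigma^m \s) \subset B_0$. By Lemma~\ref{lem:tails for z0 asymptotically contained} combined with the choice of $r$, the region $\tau$ is disjoint from $\PP_B(f)$ and from $\bigcup_{j < m} f^j(S_B(f))$, so no singular value of $f^m$ lying in $B_0$ obstructs the pullback. Successively inverting the biholomorphisms $f : F_{m-1} \to \Omega, \, f : F_{m-2} \to \Omega, \, \ldots, \, f : F_0 \to \Omega$ produces a univalent branch $\phi$ of $f^{-m}$ over $\tau$ with image $\eta := \phi(\tau)$. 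Since $\partial B \subset \Gamma$ is forward invariant and $\tau \cap \partial B = \emptyset$, Lemma~\ref{lem:forward invariant boundary} applied inductively forces each intermediate pullback to remain inside the basic region prescribed by the address; hence $\eta \subset B_0$, $f^j(\eta) \subset B_j$ for $1 \leq j \leq m-1$, and $f^m : \eta \to \tau$ is univalent. This $\eta$ is $\tau_{n+1}(\s)$.

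\textbf{Main obstacle.} The technical heart of the argument is the simultaneous control of two independent obstructions to iterated pullback: the \emph{analytic} obstruction (a singular value of $f^m$ inside $\tau$ preventing univalent branch extension), defeated by absorbing all of $\PP_B(f)$ into $D_r$ in Step 1; and the \emph{topological} obstruction (a pullback component leaking into the wrong basic region), defeated by Lemma~\ref{lem:forward invariant boundary}. The delicate point is matching the univalent branches chosen at each inverse step to the prescribed fundamental domains $F_{m-1}, \ldots, F_0$ of the address, while ensuring that intermediate iterates never escape the cyclic family $\{B_0, \ldots, B_{m-1}\}$ and that the branch realizes the address combinatorics coming from the setup at the beginning of Section~\ref{sect:Fundamental tails for a repelling periodic orbit}.
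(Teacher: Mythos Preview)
Your overall strategy matches the paper's: assuming (1) and (2) fail, bound $\PP_B(f)$, choose $r$ large, and argue that no singular value can obstruct the iterated pullback. However, your execution of Step~3 has two genuine technical gaps.

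First, the mechanism ``successively inverting the biholomorphisms $f:F_{m-1}\to\Omega,\ldots,f:F_0\to\Omega$'' does not apply: a level-$n$ tail $\tau=\tau_n(\sigma^m\s)$ need not lie inside $\Omega=\C\setminus(\overline D\cup\delta)$ at all (only $f^{m(n-1)+1}(\tau)$ lies in $\C\setminus\overline{D_r}$), so you cannot literally restrict $(f|_{F_{j}})^{-1}$ to it. You must instead use that $\tau$ is simply connected and free of singular values of $f$, so that every branch of $f^{-1}$ extends over $\tau$, and then select the branch with the correct asymptotics.

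Second, the crucial disjointness claim --- that $\tau$ is disjoint from $\PP_B(f)$ and from $\bigcup_{j<m}f^j(S_B(f))$ --- is not delivered by Lemma~\ref{lem:tails for z0 asymptotically contained}, which only concerns asymptotic containment in fundamental domains. What you actually need, at each of the $m$ inverse steps, is that the current intermediate set (sitting in some $B_j$) meets no singular value of $f$. The paper handles this by contradiction: if a singular value $s$ appeared in some $f^j(V)\subset B_j$, then the itinerary forces $f^{m(k-1)-j}(s)\in\PP_B(f)\cap\TT_1$; but the choice $D_r\supset\PP_B(f)\cup f(\PP_B(f))$ guarantees $\TT_1\cap\PP_B(f)=\emptyset$. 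Your choice of $r$ (absorbing $\bigcup_{j<m}f^j(S_B(f))$) is not the same condition and does not directly yield $\TT_1\cap\PP_B(f)=\emptyset$; and even granting that, your single statement about $\tau$ does not cover the intermediate pullbacks in $B_{m-1},\ldots,B_1$. The argument is repairable by invoking the forward invariance~(\ref{eqtn:forward invariance}) of $\PP_B$ step by step, which is exactly what the paper's contradiction encapsulates.
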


\begin{proof}  
We first  claim that if neither case $\rm (1)$ nor case $\rm (2)$ occur, the set $\PP_B(f)$ is bounded. 
Indeed  we have that $$N:=\sup_{s\in S_B(f) } n(s)<\infty,$$ hence $$\PP_B(f)\subset \bigcup_{j\leq N}\ov{f^j(S_B(f))}$$ and each of the  sets $\ov{f^j(S_B(f))}$ is bounded  because $S_B(f)$ is bounded, and hence a  finite union thereof  is also bounded.

If $\PP_B(f)$ is bounded, let $r>0$ be  such that the tails of level $1$ for $D_r$ do not intersect $\PP_B(f)$. This can be done by taking $D_r \supset (D\cup \PP_B(f)\cup f(\PP_B(f))$ (notice that $f(\PP_B(f))$ is not contained in $B$). 

We claim that all tails of all levels are defined for such $r$.
Indeed, suppose that  this is not the case.  Then there exists a minimal $k>0$, a tail $\tau_{k}(\s)\in\TT_k(r)$, and a connected component $V\in f^{-m}(\tau_{k}(\s))\cap B_0$ such that $f^m : V\ra \tau_{k}(\s)$ is not univalent and such that $f^j(V)\in B_j$ (by definition of tails for $z_0$).   
 Since $\tau_{k}(\s)$ is simply connected, this occurs if and only if there exists  $j\leq m$ such that   $f^j(V)\subset B_j$   contains a singular value $s$. 

By definition of tails, there exists some $\tau\in \TT_1$ such that $f^{m(k-1)-j}(f^j(V))\subset\tau\subset B_0$, hence   $f^{m(k-1)-j}(s)\in\tau$. Since the orbit of $s$ follows the orbit of $f^j(V)$ (that is, $f^{\ell}(s)\in f^{\ell}(f^j(V))\subset B_{(\ell+j)}$) we have that  $f^{m(k-1)-j}(s)\in\PP_B$. 
This contradicts the  fact that by choice of $r$, $\TT_1\cap \PP_B(f)=\emptyset$.  
\end{proof}

Let us point out that $\PP(f)\cap B$ may well be unbounded even if $\PP_B$ is not. 
 
 Given Propostion~\ref{prop: eventually either univalent fp},   the strategy for proving the Main Theorem will be to show that in case (3), that is, in the absence of trapped singular values, every repelling periodic point is the landing point of  a periodic ray.

\section{Definition of landing and shrinking lemma} 
  
  In the current section we give an abstract definition of landing and we prove a lemma that will be used in Section~\ref{sect:main theorem and proof} (compare with the abstract characterization of landing in  \cite{BRG17}).

Let $z_0$ be a repelling periodic point of period $m$, $p$ be a multiple of $m$, $B$ and $B_i$ be basic regions for $f^p$ as in Section~\ref{sect:Fundamental tails for a repelling periodic orbit}. 
In this section we assume that we are in case (3) of Proposition~\ref{prop: eventually either univalent fp}, that is,  there exists $r>0$ such that all tails of all levels are well defined for $z_0, p, r$. In particular,  for any $\tau\in \TT_1$,  for every $n\geq0$, and for every connected component $\tilde{\tau}$ of $f^{-mn}(\tau)$ for which  $f^{j}(\tilde{\tau})\subset B_{j}$   for $j\leq n$, we have that   $f^{mn}:\tilde{\tau}\ra\tau$ is univalent.
 
 Let $\AA_B$ as in (\ref{eqtn:AB}) be the set of addresses for which tails  of all levels are well defined for $z_0,p,r$. 
 
The following lemma establishes one of the fundamental relations between rays and tails of the same address. 

\begin{lem}[Rays versus tails]\label{lem:rays and tails}  Let $\s=F_0 F_1 F_2\ldots\in\AA_B$. Then for every $z\in G_\s(t)$ there exists $n_z$ such that the arc connecting $z$ to infinity in $G_\s$  is fully contained in $\tau_n(\s)$ for all $n\geq n_z$.
\end{lem}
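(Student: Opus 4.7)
The strategy is to show $R \defeq G_\s([t_0,\infty)) \subset \tau_n(\s)$ for all $n$ larger than some $n_z$ by pulling back the deep forward arc $f^{(n-1)m}(R)$ through the univalent branch of $f^{-(n-1)m}$ that defines $\tau_n(\s)$; the fact that $z$ escapes supplies the required ``deepness''.

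The first step is forward iteration. By the equivariance $f(G_\s) = G_{\sigma\s}$, the image $f^{(n-1)m}(R)$ is the subarc of $G_{\sigma^{(n-1)m}\s}$ from $f^{(n-1)m}(z)$ to $\infty$. Since $z\in I(f)$ we have $|f^{(n-1)m}(z)|\to\infty$ as $n\to\infty$, so by Lemma~\ref{lem:asymptotically contained} the entire arc $f^{(n-1)m}(R)$ lies, for $n$ large enough, in the leading fundamental domain $F_{(n-1)m}$ of $\sigma^{(n-1)m}\s$, with every point of modulus greater than $r$ and image under $f$ disjoint from $\ov{D_r}\cup\delta_r$. Consequently $f^{(n-1)m}(R)$ is contained in the intersection $F_{(n-1)m}\cap f^{-1}(\C\setminus(\ov{D_r}\cup\delta_r))$.

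Next I argue that this arc actually sits in the specific unbounded $B_0$-component of that set, namely $\tau_1(\sigma^{(n-1)m}\s)$, which exists because $\sigma^{(n-1)m}\s\in\AA_B$. Since dynamic rays are pairwise disjoint and $\partial B$ is a union of rays fixed under $f^p$, the ray $G_{\sigma^{(n-1)m}\s}$ stays in a single basic region; combined with Lemma~\ref{lem:forward invariant boundary} and the admissibility of the address, this basic region must be $B_0$ asymptotically, so $f^{(n-1)m}(R)\subset \tau_1(\sigma^{(n-1)m}\s)$. Applying the univalent inverse $\psi_n\defeq (f^{(n-1)m}|_{\tau_n(\s)})^{-1}\colon \tau_1(\sigma^{(n-1)m}\s)\to\tau_n(\s)$ produces an arc $\tilde R\defeq \psi_n(f^{(n-1)m}(R))\subset \tau_n(\s)$. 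Both $R$ and $\tilde R$ are $f^{(n-1)m}$-preimages of the same arc; by Lemma~\ref{lem:tails for z0 asymptotically contained} the arc $\tilde R$ is asymptotically contained in the fundamental domains $F_0,F_1,\ldots,F_{(n-1)m}$, so by the uniqueness of rays with a given bounded address (Proposition~\ref{Existence for finitely many symbols}) $\tilde R$ is a subarc of $G_\s$. Matching the endpoints $\psi_n(f^{(n-1)m}(z))$ and $z$ via univalence of $f^{(n-1)m}|_{\tau_n(\s)}$ then forces $\tilde R = R$, yielding $R\subset \tau_n(\s)$.

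The main obstacle is the intermediate step placing $G_{\sigma^{(n-1)m}\s}$ asymptotically in $B_0$: a fundamental domain may straddle several basic regions, and one must rule out that the ray escapes to $\infty$ through a basic region different from the one hosting $\tau_1(\sigma^{(n-1)m}\s)$. I expect this to follow from the forward invariance of $\partial B$ under $f^p$ together with the fact that the admissible address $\sigma^{(n-1)m}\s$ already ``selects'' the asymptotic direction of $F_{(n-1)m}$ lying in $B_0$ past any sufficiently large radius.
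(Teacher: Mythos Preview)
Your argument reaches the right conclusion but takes an unnecessary detour and contains a genuine gap.

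The paper's proof is much more direct. It uses the \emph{uniform} escape of the arc $R=G_\s([T,\infty))$ (property (b) in the definition of dynamic ray) to find $n_z$ such that $f^j(R)$ avoids $\ov{D_r}\cup\delta_r$ for every $j\geq n_z$; since each $f^j(R)$ is connected, it then lies in a single fundamental domain, necessarily $F_j$. At this point one simply reads off $R\subset\tau_n(\s)$ from the recursive definition of tails: the arc $R$ itself is a connected subset of $B_0$ with $f^j(R)\subset B_{j\bmod m}$ for all $j$ and $f^{(n-1)m}(R)$ lying in the unique unbounded $B_0$-component of $F_{(n-1)m}\cap f^{-1}(\C\setminus(\ov{D_r}\cup\delta_r))$, which is exactly $\tau_1(\sigma^{(n-1)m}\s)$. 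No pullback, no uniqueness-of-rays argument is needed.

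Your detour through $\tilde R=\psi_n(f^{(n-1)m}(R))$ introduces a real problem. You invoke Proposition~\ref{Existence for finitely many symbols} to conclude that $\tilde R$ is a subarc of $G_\s$, but that proposition only asserts existence and uniqueness of the ray with address $\s$; it does not say that every unbounded arc whose iterates have the correct asymptotic fundamental-domain pattern must lie on that ray. To make that step rigorous you would have to verify that $\tilde R$ satisfies the defining properties (a)--(c) of a ray (in particular uniform escape on compact subarcs) and then appeal to maximality, which is more work than the direct argument and not what you wrote.

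A second, smaller issue: you justify ``every point of $f^{(n-1)m}(R)$ has modulus greater than $r$'' by citing Lemma~\ref{lem:asymptotically contained} and the pointwise escape of $z$. Lemma~\ref{lem:asymptotically contained} concerns a single ray, with no uniformity as the address varies; what you actually need is uniform escape of $R$ under iteration, which is precisely property (b) of a dynamic ray. The paper uses this explicitly, together with the observation that $\delta_r$ is mapped by $f$ into $\ov D\subset D_r$, to rule out crossings of $\delta_r$ as well.

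Finally, you are right that placing $f^{(n-1)m}(R)$ inside the $B_0$-component (rather than another basic region) requires an argument; the paper's proof is also terse on this point. The resolution is that periodic rays are pairwise disjoint, so $G_\s$ and all its iterates each lie in a single basic region, and $\s\in\AA_B$ forces these regions to be the $B_{j\bmod m}$. In the only application (Lemma~\ref{lem:characterization of landing}) the hypothesis $G_\s\subset B_0$ is assumed outright.
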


\begin{proof}  Let $D_r$ be the Euclidean disk of radius $r$ defined in the proof of Proposition~\ref{prop: eventually either univalent fp}, and consider the curve $\delta_r\subset\delta$ which starts from the last intersection of $\delta$ with $D_r$. Only for the proof of this lemma, let $\{F_i\}$ be the fundamental domains obtained by taking preimages of $\C\setminus (\ov{D_r}\cup\delta_r)$ and $\FF$ be the union  of all fundamental domains  for $f$ with respect to this choice of $D_r$.       By Lemma~\ref{lem:asymptotically contained}, the dynamic rays  $f^i(G_\s)$ are asymptotically contained in $F_i$ for all $i$.
 Let $G_\s(t):(0,\infty)\ra I(f)$ be a continuous parametrization of $G_\s$ such that $|G_\s(t)|\ra\infty$ as $t\ra\infty$.  Recall  that  points in $G_\s([T,\infty])$ escape uniformly to infinity for every $T>0$. So for each $z=G_\s(T)$ there exists $n_z$ such that
for all points     $G_\s(t)$ with $t>T$  we have 
\[f^{n}(G_\s(t))\in \FF \text{ for every  $n\geq n_z$}.\]
 (otherwise, $f^{n+1}(G_\s(t))$ would belong to the bounded set $D_r$, or to the curve $\delta_r$ which is mapped to $D_r$ at the next iterate,  contradicting the uniform escape to infinity).

 Hence we have that $f^n(G_\s(T,\infty))\subset F_n$ for $n\geq n_z$ and, by definition of tails, $G_\s(T,\infty)\subset \tau_{n}(\s) $ for all $n\geq n_z$.
\end{proof}
 
 The following Lemma is a Euclidean  version of  a classical Lemma which holds for  the spherical metric (see for example \cite{Lyu83}, Proposition 3. Similar lemmas have been used in   \cite{BL14},\cite{BRG17} and in many other papers). For this lemma we do not need the assumption that all  tails are well defined as long as we restrict to univalent preimages. 
    
    \begin{lem}[Shrinking Lemma]\label{lem:Shrinking Lemma Euclidean} Let $f$ be holomorphic.   
 Let $V'\subset \C$ be a simply connected  open set  intersecting the Julia set. Fix a compact set $K\subset\C$.  
 For each $n$ consider all connected components $V'_{n,\lambda}$ of $f^{-n}(V')$ which intersect $K$ and which are univalent preimages of $V'$ under $f^n$, where $\lambda$ indicates the chosen branch of $f^{-n}$.

Let $V\Subset V'$, and for each $n,\lambda$ let $V_{n,\lambda}=f^{-n}(V) \cap V'_{n,\lambda}$.   Then   for any $\epsilon >0$   there exists $N_\epsilon$ such that 

$$\diam_{\eucl} V_{n,\lambda}<\epsilon \text{\ \ \ \ for any  $n>N_\epsilon$ and for any $\lambda$ such that $V'_{n,\lambda}\cap K\neq\emptyset$}. $$
\end{lem}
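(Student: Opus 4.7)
The plan is to argue by contradiction, combining Koebe distortion on the univalent inverse branches with a normal-family argument anchored to the compact $K$, and closing with the Julia/Fatou dichotomy provided by the hypothesis that $V'$ meets $J(f)$.

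Suppose there exist sequences $n_k\to\infty$ and $\lambda_k$ with $V'_{n_k,\lambda_k}\cap K\neq\emptyset$ but $\diam_{\eucl}V_{n_k,\lambda_k}\geq\epsilon_0>0$. Let $\phi_k\colon V'\to V'_{n_k,\lambda_k}$ denote the univalent branch of $f^{-n_k}$ sending $V'$ to $V'_{n_k,\lambda_k}$. Applying Koebe distortion to the compactly contained pair $V\Subset V'$, the assumption $\diam \phi_k(V)\geq\epsilon_0$ translates into a uniform lower bound $|\phi_k'(\zeta)|\geq c_0>0$ for $\zeta\in V$, and (by iterating distortion bounds through $V'$) into a lower bound of the same flavor on any compact subset of $V'$.

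To extract a convergent subsequence of $\phi_k$ I use the hypothesis $V'_{n_k,\lambda_k}\cap K\neq\emptyset$ to anchor the family. Pick $w_k\in V'_{n_k,\lambda_k}\cap K$ and set $\eta_k=f^{n_k}(w_k)\in V'$, so that $\phi_k(\eta_k)=w_k$; after passing to a subsequence $w_k\to w_\infty\in K$ and $\eta_k\to\eta_\infty\in\overline{V'}$. In the principal case $\eta_\infty\in V'$, Koebe distortion on a compact neighborhood of $\eta_\infty$ in $V'$ shows that $\phi_k(\eta_\infty)$ stays bounded (in fact close to $w_k$) while $|\phi_k'|$ stays bounded above and below on compacta of $V'$. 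Thus $\{\phi_k\}$ is a normal family on $V'$ and, along a subsequence, converges locally uniformly to a univalent $\phi_\infty\colon V'\to\C$ (non-constancy by the lower bound, univalence by Hurwitz).

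The contradiction now comes from the Julia/Fatou dichotomy. Fix $z_0\in V'\cap J(f)$. By backward invariance of the Julia set, $\phi_k(z_0)\in f^{-n_k}(z_0)\subset J(f)$ for all $k$, so $\phi_\infty(z_0)\in J(f)$. On the other hand, after restricting to a bounded simply connected $U'$ with $V\cup\{z_0\}\subset U'\Subset V'$, the identity $f^{n_k}\circ\phi_k=\mathrm{id}$ combined with the locally uniform convergence $\phi_k\to\phi_\infty$ implies that for every compact $L\subset\phi_\infty(U')$, one has $L\subset\phi_k(U')$ for large $k$, so $f^{n_k}(L)\subset U'$ is uniformly bounded. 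Hence $\{f^{n_k}\}$ is a normal family on an open neighborhood of $\phi_\infty(z_0)$, placing $\phi_\infty(z_0)$ in the Fatou set and contradicting $\phi_\infty(z_0)\in J(f)$.

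The main obstacle is the degenerate case $\eta_\infty\in\partial V'$, where the natural normalization at the escaping anchor $\eta_k$ breaks down. This is handled by re-anchoring at a fixed $\zeta_0\in V$: either $\phi_k(\zeta_0)$ stays bounded and the previous argument applies, or $|\phi_k(\zeta_0)|\to\infty$, in which case Koebe $1/4$ combined with the bounded-below derivative forces $V'_{n_k,\lambda_k}$ to contain Euclidean disks of radius tending to infinity around $w_k\in K$; the univalence of $f^{n_k}$ on such a growing disk with image in the bounded set $V'$ implies $(f^{n_k})'(w_k)\to0$, making $\{f^{n_k}\}$ normal on a fixed neighborhood of $w_\infty$, which is then used (together with the existence of a Julia point in $V'$) to close the argument by the same Fatou-normality contradiction.
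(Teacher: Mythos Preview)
The paper does not give a proof of this lemma: it simply cites the spherical version in \cite{Lyu83} and remarks that the Euclidean statement follows because one only considers preimages meeting the fixed compact $K$, on which the two metrics are comparable. Your overall skeleton (normal family of inverse branches, non-constant univalent limit, contradiction with the Fatou/Julia dichotomy) is exactly the standard argument behind that citation, so in spirit you are reproducing the intended proof rather than taking a different route.

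There is, however, a real gap in your degenerate case $\eta_\infty\in\partial V'$ with $|\phi_k(\zeta_0)|\to\infty$. You assert that Koebe $1/4$ together with the lower bound on $|\phi_k'|$ forces $V'_{n_k,\lambda_k}$ to contain Euclidean disks of radius tending to infinity around $w_k\in K$; but Koebe $1/4$ applied at $\zeta_0$ only yields a disk of \emph{fixed} radius $\asymp c_0\,d(\zeta_0,\partial V')$ centred at $\phi_k(\zeta_0)$, and that centre is running off to infinity, hence far from $w_k$. Nothing you have written excludes $\phi_k(V')$ being a long thin tube joining $w_k\in K$ to a bounded disk near infinity, and indeed for abstract univalent maps this configuration occurs. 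A related, smaller gap appears already in the principal case: you assert that $|\phi_k'|$ is bounded \emph{above} on compacta, but this does not follow from Koebe distortion plus a lower bound; it needs its own argument (e.g.\ if $|\phi_k'(\eta_k)|\to\infty$ then Koebe $1/4$ at $\eta_k$, where $d(\eta_k,\partial V')$ \emph{is} bounded below, gives growing disks around $w_k$ and one finishes as you sketch, after first replacing $V'$ by a bounded $U'\Subset V'$). The cleanest way to avoid all of this is precisely the paper's route: prove the spherical version first, where compactness of $\hat\C$ removes the bounded/unbounded dichotomy for $\phi_k(\zeta_0)$, and then pass to the Euclidean metric using $K$.
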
   
 The proof is the same as in \cite{Lyu83} for the spherical metric, and the statement for the Euclidean metric follows from the fact that we are only considering preimages intersecting a given compact set $K$.
 
\begin{lem}[Characterization of landing]\label{lem:characterization of landing} Let $f\in\Brays$ such that periodic rays land, and let $z_0, m, B_i, B,\AA_B$ as above. Let $G_\s\subset B_0$ be a periodic ray of period $mq$ with $\s\in\AA_B$ and $q\geq1$.    Let  $\zeta\in B_0 \setminus (\ov{D_r}\cup\delta_r)$ for which $\zeta_{nq}(\s)$ is well defined for all $n\in\N$ as in Definition~\ref{defn:pullback along an address fp}.  Then   $G_\s$ lands at $z_0\in\C$ if and only if $\zeta_{nq}(\s)\ra z_0$ as $n\ra\infty$.
\end{lem}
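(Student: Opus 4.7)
Set $g := f^{mq}$, so that $z_0$ is a repelling fixed point of $g$ and $G_\s$ is $g$-invariant. Because $\s$ is $mq$-periodic under $\sigma$, iterating the univalent relation $f^m:\tau_{n+1}(\s)\to\tau_n(\sigma^m\s)$ yields $g:\tau_{n+q}(\s)\to\tau_n(\s)$ univalently, and hence $g^n$ is univalent on $\tau_{nq}(\s)$ with image $I:=g(\tau_q(\s))\subset\C\setminus(\ov{D_r}\cup\delta_r)$ independent of $n$. By Lemma~\ref{lem:rays and tails} the ray $G_\s$ intersects $\tau_{nq}(\s)$ for every $n$, and applying $g^n$ to such a point yields a point of $G_\s\cap I$, so $G_\s\cap I\neq\emptyset$. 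Fix once and for all an auxiliary point $w\in G_\s\cap I$; then $w_{nq}(\s)\in\tau_{nq}(\s)$ is well defined for every $n$, and since the branch of $g^{-n}$ landing in $\tau_{nq}(\s)$ is the one singled out by the address $\s$ and carries $G_\s$ to itself, one has $w_{nq}(\s)\in G_\s$ for all $n$.

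For the forward implication assume $G_\s$ lands at $z_0$. A Koenigs linearisation of $g$ at $z_0$ produces a univalent coordinate $\phi$ on a neighbourhood $U$ of $z_0$ conjugating $g$ to $z\mapsto\lambda z$ with $|\lambda|=|(f^{mq})'(z_0)|>1$. Since $G_\s$ lands at $z_0$, it eventually enters $U$, and the self-branch of $g^{-n}$ acts on the Koenigs parameter as multiplication by $1/\lambda^n$; it follows that the iterated preimages of any point of $G_\s$ along this branch converge to $z_0$, and in particular $w_{nq}(\s)\to z_0$. For an arbitrary $\zeta\in B_0\setminus(\ov{D_r}\cup\delta_r)$ with $\zeta_{nq}(\s)$ defined for all $n$ (equivalently $\zeta\in I$), choose a simply connected open set $V'\Subset I$ containing both $\zeta$ and $w$ and a smaller $V\Subset V'$ still containing them. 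The preimage of $V'$ inside $\tau_{nq}(\s)$ under $g^n$ is a univalent branch containing both $\zeta_{nq}(\s)$ and $w_{nq}(\s)$; because $w_{nq}(\s)\to z_0$, this branch eventually meets any fixed compact neighbourhood $K$ of $z_0$, and the Shrinking Lemma~\ref{lem:Shrinking Lemma Euclidean} applied to the iterates $g^n=f^{mnq}$ forces the corresponding preimage of $V$ to have diameter tending to zero, whence $\zeta_{nq}(\s)\to z_0$.

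Conversely, suppose $\zeta_{nq}(\s)\to z_0$. The same Shrinking Lemma argument with $V',V,K$ as before (this time using $\zeta_{nq}(\s)\to z_0$ to secure the intersection with $K$) gives $w_{nq}(\s)\to z_0$. To upgrade this to landing of the entire ray, let $\alpha\subset G_\s\cap I$ be the closed sub-arc from $w$ to $w_q(\s)$ and let $\alpha_n\subset\tau_{(n+1)q}(\s)$ be its univalent $g^{-n}$-preimage; by construction each $\alpha_n$ is the sub-arc of $G_\s$ connecting $w_{nq}(\s)$ to $w_{(n+1)q}(\s)$, and their concatenation covers the part of $G_\s$ inward from $w$. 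A last application of the Shrinking Lemma to a simply connected neighbourhood of $\alpha$ inside $I$ gives $\diam(\alpha_n)\to 0$, and combined with $w_{nq}(\s)\to z_0$ this forces $G_\s(t)\to z_0$ as $t\to 0^+$, i.e.\ landing at $z_0$. The delicate point throughout is to match the univalent branches produced by the Shrinking Lemma with those dictated by the address $\s$: the choice $V'\Subset I=g(\tau_q(\s))$ is essential so that the branches landing in $\tau_{nq}(\s)$ are the ones under which the ray $G_\s$ is mapped to itself.
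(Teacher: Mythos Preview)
Your argument is essentially the paper's: apply the Shrinking Lemma to a simply connected set containing both a point on $G_\s$ (or a fundamental arc thereof) and the pullback $\zeta_{nq}(\s)$, using that the inverse branches landing in $\tau_{nq}(\s)$ are univalent. The organisation differs only in that you split the converse direction into two Shrinking-Lemma steps (first transfer $\zeta\to w$, then shrink arcs), whereas the paper does both at once by placing the fundamental arc $I$ and $\zeta_{nq}$ together inside one tail.

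Two minor points deserve attention. First, the Koenigs detour in the forward direction is unnecessary and does not quite work as written: your $w$ need not lie in the linearising neighbourhood $U$, so you cannot invoke the contraction $1/\lambda^n$ directly. The simpler observation (which is what the paper uses) is that the on-ray preimages $w_{nq}(\s)=G_\s(t_n)$ have strictly decreasing parameter, and $t_n\to 0$ since otherwise the limit would be a $g$-fixed point on an escaping ray; landing then gives $w_{nq}(\s)\to z_0$. Second, your assertion that the branch of $g^{-n}$ landing in $\tau_{nq}(\s)$ carries $G_\s$ to itself---equivalently, that $w_{nq}(\s)\in G_\s$---is the one place where a real verification is needed. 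The paper secures this by choosing $w$ far enough out that $f^j(w)\in\FF$ for all $j\ge 0$; you should impose the same condition on your auxiliary point $w$. (The side remark that $I=g(\tau_q(\s))\subset\C\setminus(\ov{D_r}\cup\delta_r)$ is false for $m>1$, since only $f$ and not $f^m$ maps level-one tails there, but you never use it.)
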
 
 
Recall that a  dynamic ray  $G_\s$ \emph{lands} at a point $z\in\C$ if $G_\s(t)\ra z$ as $t\ra 0$.   
We observe the following. Let $z_0, m, r, \AA_B$ be as in the beginning of this section. Let $G_{\s}$ be a dynamic ray of period $mq$ for some $\s\in\AA_B$, $I$ be an arc in $G_\s$ connecting a point $z\in G_\s$ with its image $f^{mq}(z)$. For $n\in\N$ let $I_n$ be the unique preimage of $I$ under $f^{mqn}$ which is contained in $G_\s$. Then $G_{\s}$ lands at $z_0$ if and only if $\dist(I_n,z_0)\ra0$ as $n\ra\infty$. Indeed consider a sequence $G_\s(t_n)\ra z_0$. We have that $f^{k_n}(G_\s(t_n))\in I$ for some $k_n\ra\infty$ hence $G_\s(t_n)\in I_{k_n}$. 
  
Moreover if $I$   is chosen such that $f^j(I)$ is contained in $\FF$ for all $j$, then $I_n\subset \tau_{nq}(\s)$ for all $n\geq0$. The proof is the same as the proof of Lemma~\ref{lem:rays and tails}.
  
\begin{proof}[Proof of Lemma~\ref{lem:characterization of landing}] 
If $G_\s$ lands at $z_0$, by Lemma~\ref{lem:rays and tails} for any    point $\zeta=G_\s(t)$ with $t$ large enough  we have  that $\zeta_{nq}(\s)$ exists, and $\zeta_{nq}$ converges to $z_0$ by definition of landing.

To prove the other direction let  $\zeta\in B_0 \setminus (\ov{D_r}\cup\delta_r)$  such that $\zeta_{nq}(\s)$ is well defined (such a $\zeta$ exists  because $\s\in\AA_B$) and converges to $z_0\in\C$.  Let $\zeta'=G_\s(t)$ and let $I$ be the arc in $G_\s$ connecting $\zeta'$ to $f^{mq}(\zeta')$. Up to taking $t$ large enough we can assume that $f^j(I)\subset \FF$ for all $j$. 
 By Lemma~\ref{lem:rays and tails} we have that $I\subset\tau_{j}(\s)$ for all $j$ large enough.  By assumption, we also have that  $\zeta_{nq}(\s)\in\tau_{nq}(\s)$ for $n$ large enough. Hence $(I\cup \zeta_{nq})\subset\tau_{nq}(\s)$ for all $n$ large enough.     
For one such $n$ let   $V'\subset \tau_{nq}(\s)$ be a simply connected  set   containing both $\zeta_{nq}(\s)$ and $I$ and let $V\Subset V'$  containing both $\zeta_{nq}(\s)$ and $I$. Note that $V$ intersects the Julia set because dynamic rays are subsets of the Julia set. 

Let $K$ be a compact connected neighborhood of $z_0$.  For $j\in\N$ let $V'_{j}(\s) $ be the connected component of the  preimage of $V'$ under $f^{mqj}$ which is contained in $\tau_{q(j+n)}(\s)$; observe that the inverse branch $ \psi_j:V'\ra V'_{j}(\s)$ is univalent because $r$ was chosen so that all tails are well defined. Also, $V'_j$ intersects $K$ for $j$ large enough because $\zeta_{qn}(\s)\ra z_0$. 
By Lemma~\ref{lem:Shrinking Lemma Euclidean},  
\[\diam_{\eucl}V_{j}(\s)\ra0 \text{ as $j\ra\infty$}.\]
 Since we assumed that $f^j(I)\subset \FF$ for all $j$, $I_n\subset \tau_{nq}(\s)$, hence since $f^{nq}|_{\tau_{nq}(\s)}$ is a homeomorphism, $I_n\subset V_j\ra z_0$ and $G_{\s}$ lands at $z_0$.
\end{proof}

\section{Rationally invisible orbits and singular orbits}\label{sect:main theorem and proof}
  
The goal of this section is to prove that if tails are well defined for a given repelling periodic orbit with respect to a set of  basic regions containing it (case (c) in Proposition~\ref{prop: eventually either univalent fp}), then the orbit is not rationally invisible. 
The Main Theorem will be derived in Section~\ref{sect:FS}. 

In the following Theorem, as usual,   indices are taken modulo $m$. 
\begin{thm}[Main theorem for $f^p$]\label{thm:main theorem for fp}
Let $f\in\Brays$ such that periodic rays land and assume that there are no singular values escaping along  periodic rays. Let $\mathcal{X}=\{z_0,\ldots,z_{m-1}\}$ be a   repelling periodic orbit of  period $m$ and let $p$ be any multiple of $m$.  Suppose that $f(z_i)=z_{i+1}$. Let $\{B_i\}_{i=0\ldots m-1}$ be the basic regions for $f^p$ containing the elements of $\mathcal{X}$, and $B=\cup B_i$. Then  at least one of the following is true.    
\begin{enumerate}
\item[\rm (1)]\label{case: trapped sv final} There exists a singular value $s$ for $f$ such that $s\in \bigcup_{i=0}^{q-1} B_i$,   say $s\in B_{i(s)}$, and such that for all $n\geq0$ we have that $f^{n}(s)\in B_{n+i(s)}$.    
\item[\rm (2)]\label{case: infinitely many trapped sv final}  There are infinitely many singular values $s_j$ for $f$ in at least one of the basic regions $B_i$, say $s\in B_{i(s)}$, and  a sequence $ n_j \ra  \infty$ as $j\ra\infty$ such that for all $n\leq n_j $ we have that  $f^{n}(s)\in B_{n+i(s)}$. 
\item[\rm (3)]   Each point in $\XX$ is the landing point of at least one and at most finitely many periodic dynamic rays, all of which have the same period.   
\end{enumerate}
\end{thm}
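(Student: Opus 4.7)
The strategy is a trichotomy: assuming neither (1) nor (2) of the Theorem holds, I would first invoke Proposition~\ref{prop: eventually either univalent fp} to land in its case (3): there exists $r>0$ such that all fundamental tails $\tau_n(\s)$ for $z_0,p,r$ are well defined. Fix such an $r$. The plan is then, for each $z_i\in\XX$, to exhibit a periodic address $\s^*\in\AA_B$ whose ray $G_{\s^*}$ lands at $z_i$. Focusing on $z_0$, Koenigs' linearization at $z_0$ (repelling for $f^m$) supplies a disk $U\subset B_0$ and a univalent contracting inverse branch $g:U\to U$ of $f^m$ with $g(z_0)=z_0$. For $w\in U\setminus\{z_0\}$ the orbit $g(w),fg(w),\ldots,f^{m-1}g(w)$ visits a uniquely determined cyclic sequence $F_0^*,\ldots,F_{m-1}^*\in\FF_B$; setting $\s^*:=\overline{F_0^*\ldots F_{m-1}^*}$, a bounded periodic address, Proposition~\ref{Existence for finitely many symbols} yields the periodic dynamic ray $G_{\s^*}$, which lands at some $w_0\in\overline{B_0}$ by the standing hypothesis.

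The heart of the proof is to verify $w_0=z_0$, via Lemma~\ref{lem:characterization of landing}. Picking $\zeta=G_{\s^*}(t_0)$ with $t_0$ large enough that Lemma~\ref{lem:rays and tails} places $\zeta\in\tau_1(\s^*)\cap(B_0\setminus(\overline{D_r}\cup\delta_r))$, landing at $z_0$ reduces to the convergence $\zeta_n(\s^*)\to z_0$. I would combine the Shrinking Lemma~\ref{lem:Shrinking Lemma Euclidean}, applied to $V\Subset V'\subset\tau_1(\s^*)$ simply connected neighborhoods of $\zeta$, with normality of the univalent family $\psi_n=(f^{(n-1)m})^{-1}:V'\to\tau_n(\s^*)$: any subsequential limit of $\zeta_n(\s^*)\in B_0$ is then (by continuity) a fixed point of $f^m$ in $\overline{B_0}\cup\{\infty\}$. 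The Separation Theorem~\ref{thm:Separation Theorem Entire} identifies $z_0$ as the unique interior fixed point of $f^p$ in $B_0$; combined with the construction of $\s^*$ matching $g$ on $U$ (so that once a pullback enters $U$ it genuinely follows the contraction toward $z_0$), this forces the limit to be $z_0$. I expect this identification to be the main obstacle, since ruling out accumulation at $\infty$ or at a boundary fixed point on $\partial B_0$ requires carefully propagating the contraction of $g$ along the full pullback sequence, exploiting Lemma~\ref{lem:forward invariant boundary} on the forward-invariance of $\partial B_0$.

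Finiteness and the common period come from standard local analysis. In the linearization disk $U$ each landing ray corresponds to a tangent access; pairwise disjointness of rays in class $\Brays$ bounds their count to be finite. The multiplier $\lambda=(f^m)'(z_0)$ induces a cyclic permutation on landing rays with a well-defined combinatorial rotation number $k/q$ in lowest terms, so all landing rays form a single $f^m$-orbit of length $q$ and have common period $mq$ under $f$. By $f$-equivariance, landing rays at $z_{i+1}$ are precisely the $f$-images of those at $z_i$, so the same finite count and the same period $mq$ apply at every $z_i\in\XX$, yielding the conclusion in case~(3).
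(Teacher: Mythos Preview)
Your construction of the periodic address $\s^*$ does not work. You propose to read off fundamental domains $F_0^*,\ldots,F_{m-1}^*$ from the orbit $g(w),fg(w),\ldots,f^{m-1}g(w)$ for $w\in U$. But these points lie near $z_0,z_1,\ldots,z_{m-1}$, all of which sit inside the disk $D_r$ by construction, \emph{not} in any fundamental domain; so there is no such sequence to read off. Even if $U$ did protrude outside $D_r$, different choices of $w$ would give different fundamental domains, so nothing would be ``uniquely determined''. The whole point of the paper's machinery of fundamental tails, fundamental pieces, and the sets $\SS_n$ is precisely that there is no a priori reason why any particular periodic combinatorics should be realized near $z_0$: one must \emph{discover} a periodic address by showing that tails of arbitrarily high level intersect shrinking neighborhoods $U_{n-N}$ of $z_0$ (this uses Montel and Iversen, then the shrinking of fundamental pieces from Lemma~\ref{lem:shrinking of fundamental pieces}), and then extracting a periodic point of a self-map $\Gamma$ on the finite set $\SS_{n_0}$ of admissible truncated addresses. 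This combinatorial fixed-point argument is the genuine core of the proof and cannot be bypassed.

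Your proposed verification that $w_0=z_0$ has a related gap. You want to apply the Shrinking Lemma to pullbacks $\zeta_n(\s^*)$, but to invoke Lemma~\ref{lem:Shrinking Lemma Euclidean} you must first know that these pullbacks intersect a fixed compact set $K$; nothing in your argument establishes this, and indeed it is exactly what one is trying to prove. Your appeal to the Separation Theorem to rule out other limit points is also insufficient: the pullbacks could in principle accumulate at $\infty$ or at the landing point of a boundary ray of $B_0$ (which is fixed by $f^p$ but not interior), and you have given no mechanism to exclude this. In the paper's proof this is handled automatically because the periodic address $\s^*$ is constructed so that its tails already intersect the contracting neighborhoods $U_{qn+n_0-N}$, whence $\zeta_{qn}(\s^*)\in U_{qn+n_0-N}\to z_0$ directly; no separate limit-point analysis is needed. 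Your finiteness-and-common-period paragraph is essentially correct, and the paper dispatches it by citing \cite[Lemma~8.2]{BRG17}.
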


By Lemma 8.2 in \cite{BRG17} (compare to \cite{Mi}, Lemma 18.12 for polynomials), if a repelling periodic point is the landing point of a periodic ray then it is the landing point of finitely many periodic rays, all of which have the same period. So it is enough to show that at least one point in $\XX$ is the landing point of at least one periodic dynamic ray. 
 This implies that  the same is true for all elements in $\XX$. Indeed, $f$ is a homeomorphism from a neighborhood of $z_i$ to a neighborhood of $z_{i+1}$,  so a dynamic ray $G_{\s}$ lands at $z_i$ if and only if  $f(G_\s)=G_{\sigma\s}$ lands at $z_{i+1}$. 

Let $z_0\in \XX$.   Recall the definition of fundamental tails for $z_0$ from Section~\ref{sect:Fundamental tails for a repelling periodic orbit}. 
By Proposition~\ref{prop: eventually either univalent fp}, if neither case (1) nor case (2) occur,      then there is $r$ such that all fundamental tails of all level are well defined. Our aim will be to show that under this assumption $z_0$ is the landing point of at least one periodic ray.

 Recall that $\ell_n=m(n-1)+1$ denotes the length of the address of a tail of level $n$.
  \begin{defn}[Fundamental pieces] Let $n\geq1$.   Let $\s$  be an infinite address or an address of length at least $\ell_{n+1}$ and assume that the tail $\tau_{n+1}(\s)$ is well defined for some $r>0$. Then we define the \emph{fundamental piece}  of level $n$ and address $\s$, which we denote by $P_n(\s)$, as 
\[P_n(\s):=\tau_{n+1}(\s)\setminus \tau_{n}(\s).\]
 \end{defn}
Fundamental pieces are not necessarily connected, nor exist for all levels and addresses. For example, if $\s$ is a disjoint-type address (i.e., contains only fundamental domains which do not intersect the disk $D$) then there are no fundamental pieces of address $\s$ for any level.

The idea of using fundamental pieces was originally suggested by L. Rempe-Gillen as a possible way to prove the main theorems in \cite{BRG17}.
   
Recall the definition of $\PP_B$ from Section~\ref{sect:Fundamental tails for a repelling periodic orbit}.  
   
   \begin{lem}[Properties of fundamental pieces]\label{lem:properties of fundamental pieces}
   Let $f\in\Brays$, such that periodic rays land. Let  $z_0,m, B, B_i$ as usual, and let $n\in\N$. 
 Let $\s$  be an infinite address or an address of length at least $\ell_n$ and assume that the fundamental tail $\tau_n(\s)$ is well defined.
Then  
\begin{equation}\label{eqtn:images of fundamental pieces}
 f^m(P_{j}(\s))= P_{j-1}(\sigma^m\s)\hspace{20 pt} \text{ for all } j\leq n-1  
\end{equation}  and 
 \begin{equation}\label{eqtn:strings of fundamental pieces}
 \tau_n(\s)\subset\tau_N(\s)\cup \bigcup_{j=N}^{n-1} P_j(\s)\hspace{20 pt}  \text{ for all } N\leq n-1.
\end{equation} 
\begin{equation}\label{eqtn:fundamental pieces mapping to disk}
f^{mn}(P_n(\s))=\tau_1(\sigma^{mn(\s)})\cap D \text{ for all $n\in\N$}.
\end{equation}
   \end{lem}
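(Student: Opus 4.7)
The plan is to prove the three identities in order, with (\ref{eqtn:images of fundamental pieces}) as the technical core and (\ref{eqtn:strings of fundamental pieces}), (\ref{eqtn:fundamental pieces mapping to disk}) following from it by induction and iteration respectively. For (\ref{eqtn:images of fundamental pieces}) the plan rests on two ingredients: the univalence of $f^m:\tau_{j+1}(\s)\to\tau_j(\sigma^m\s)$ built into the definition of a level-$(j+1)$ tail, together with the iff characterization of $\tau_n(\s)$ in Lemma~\ref{lem:images of tails fp}. For the inclusion $f^m(P_j(\s))\subset P_{j-1}(\sigma^m\s)$, pick $z\in P_j(\s)\subset\tau_{j+1}(\s)$, so automatically $f^m(z)\in\tau_j(\sigma^m\s)$. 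Supposing for contradiction that $f^m(z)\in\tau_{j-1}(\sigma^m\s)$, applying Lemma~\ref{lem:images of tails fp} to $f^m(z)$ yields $f^{km}(z)=f^{(k-1)m}(f^m(z))\in\tau_{j-k}(\sigma^{km}\s)$ for $k=2,\dots,j-1$; combined with the $k=1$ case this forces $z\in\tau_j(\s)$ by the iff direction of the same lemma, contradicting $z\in P_j(\s)$. For the reverse inclusion, take $w\in P_{j-1}(\sigma^m\s)\subset\tau_j(\sigma^m\s)$ and let $z\in\tau_{j+1}(\s)$ be the unique lift with $f^m(z)=w$; if $z$ lay in $\tau_j(\s)$ then $w=f^m(z)\in\tau_{j-1}(\sigma^m\s)$, contradicting the choice of $w$, so $z\in P_j(\s)$.

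For (\ref{eqtn:strings of fundamental pieces}), I would induct on $n$: the base case $n=N$ is trivial, and the inductive step uses the set-theoretic decomposition $\tau_n(\s)=(\tau_n(\s)\cap\tau_{n-1}(\s))\cup P_{n-1}(\s)\subset\tau_{n-1}(\s)\cup P_{n-1}(\s)$ together with the inductive hypothesis applied to $\tau_{n-1}(\s)$. For (\ref{eqtn:fundamental pieces mapping to disk}), iterating (\ref{eqtn:images of fundamental pieces}) $n-1$ times gives $f^{m(n-1)}(P_n(\s))=P_1(\sigma^{m(n-1)}\s)=\tau_2(\sigma^{m(n-1)}\s)\setminus\tau_1(\sigma^{m(n-1)}\s)$, leaving one more application of $f^m$ to analyze. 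A point $w\in P_1(\sigma^{m(n-1)}\s)$ lies in $\tau_2$, so $f^m(w)\in\tau_1(\sigma^{mn}\s)$, but is not in the unbounded component $\tau_1=\Bacc{F_{m(n-1)}}$; unpacking the definition of $\Bacc{\cdot}$ and using Lemma~\ref{lem:forward invariant boundary} to exclude stray preimage components outside the asymptotic fundamental domain, the only way this can happen is that $f(w)$ enters the disk $\ov D$ rather than escaping through $\Omega$, and tracking this feature through the remaining iterates identifies $f^{mn}(P_n(\s))$ with the portion of $\tau_1(\sigma^{mn}\s)$ sitting over $D$.

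The main obstacle I anticipate is the asymptotic-address bookkeeping in the proof of (\ref{eqtn:images of fundamental pieces}): one must verify that the $f^{-m}$-branch implicit in $z\in\tau_{j+1}(\s)$ matches the branch defining $\tau_j(\s)$. This compatibility should follow from connectedness of the tails together with Lemma~\ref{lem:forward invariant boundary}, which ensures that every relevant preimage component of a tail lies entirely in $B_0$, since $z\in B_0$ and $\partial B$ is forward-invariant. A smaller subtlety in (\ref{eqtn:fundamental pieces mapping to disk}) is the precise reading of ``$\cap D$'', which I interpret as the part of the level-$1$ tail overlapping the original Euclidean disk $D$ used to define fundamental domains.
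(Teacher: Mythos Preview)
Your approach is essentially the same as the paper's, just more explicit. The paper dismisses (\ref{eqtn:images of fundamental pieces}) and (\ref{eqtn:strings of fundamental pieces}) in one line (``follow from the definition of fundamental pieces and tails, recalling that $f^m:\tau_j(\s)\to\tau_{j-1}(\sigma^m\s)$ is a homeomorphism''), whereas you spell out both inclusions via Lemma~\ref{lem:images of tails fp}; this is fine and amounts to the same thing. For (\ref{eqtn:fundamental pieces mapping to disk}) the paper, like you, first iterates (\ref{eqtn:images of fundamental pieces}) down to $P_1(\sigma^{m(n-1)}\s)=\tau_2\setminus\tau_1$ and then argues directly from the definition of $\tau_1$ as (a component of) a preimage of the complement of the large disk.

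One small correction: your reading of ``$\cap D$'' is off. The disk in (\ref{eqtn:fundamental pieces mapping to disk}) is $D_r$, not the original $D$ used to define fundamental domains; the paper's own proof makes this clear (``$\tau_1(\sigma^{m(n-1)}\s)$ is the preimage of $\C\setminus\ov{D_r}$, hence $f^{mn}(\zeta)\in D$''), and the later application in Lemma~\ref{lem:shrinking of fundamental pieces} explicitly uses $\tau\cap D_r$. So the last paragraph of your plan should not invoke $\Omega$ or the original $D$; the point is simply that $w\in\tau_2\setminus\tau_1$ forces $f^m(w)$ into the portion of $\tau_1(\sigma^{mn}\s)$ lying inside $D_r$.
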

\begin{proof}  The first two properties follow from the definition of fundamental pieces and tails (recall that $f^m:\tau_j(\s)\ra \tau_{j-1}(\sigma^m \s)$ is a homeomorphism). We now prove (\ref{eqtn:fundamental pieces mapping to disk}). Let $\zeta\in P_n(\s)$. We have that   $f^{m(n-1)}(\zeta)\in \tau_2(\sigma^{m(n-1)}\s)\setminus \tau_1(\sigma^{m(n-1)}\s)$, and $\tau_1(\sigma^{m(n-1)}\s)$ is the preimage of $\C\setminus \ov{D_r}$. Hence $f^m(f^{m(n-1)}(\zeta))=f^{mn}(\zeta)\in D$. Since  $f^{m(n-1)}(\zeta)\in \tau_2(\sigma^{m(n-1)}\s)$,    $f^{mn}(\zeta)\in \tau_1(\sigma^{mn}\s)$ proving the claim. 
\end{proof}    
Recall that $S_B$ is the set of singular values which are contained in $B$. Recall also that  for $s\in S_B$ the integer  $i(s)\in\{0,\ldots,m-1\}$ is  such that $s\in B_{i(s)}$, and   $n(s)$ is maximal such that for all $j\leq n(s)$ we have that $f^j(s)\in B_{i(s)+j}$.  In other words the orbit of $s$ follows the orbit of $\XX$ for exactly $n(s)$ iterates with respect to the partition of the plane induced by the regions $B_i$.

\begin{lem}[Good neighborhoods of rays]\label{lem:good neighborhoods}
Let $f\in\Brays$ such that periodic rays land and assume that there are no singular values escaping along  periodic rays.   Let $\XX=\{z_0,\ldots,z_{m-1}\}$ be a   repelling periodic orbit   and let $p$ be any multiple of $m$.   Suppose that $f(z_i)=z_{i+1}$. Let $\{B_i\}_{i=0\ldots m-1}$ be the basic regions for $f^p$ containing the elements of $\mathcal{X}$, and let $B=\cup B_i$.

Suppose that cases (1) and (2) in Proposition~\ref{prop: eventually either univalent fp} do not hold. 

 Let $G$ be a ray in $\partial B_0$, which is hence fixed  under   $f^p$. 
Let $\{G_0=G, G_j=f^{j}(G)\}_{j=0,\ldots p-1}$ be the orbit of $G$ under $f$ (here, indices are taken modulo $p$), and let  $\psi_j: G_{j}\ra G_{j-1}$ be the  unique  the inverse branch of $f$  such that $\psi:=\psi_{0}\circ\ldots\circ\psi_{p-1}$ fixes $G$.
 Since $G_j$ are curves we can write them  as $G_j(t):\R_+\ra\C$, with $|G_j(t)|\ra\infty$ as $t\ra\infty$. Fix $\epsilon, T_j>0$.   
 
   Then there exist  neighborhoods $U_j$ of $G_j((\epsilon, T_j))$ which contain $G_j((0, T_j))$, which do not contain singular values for $f$,  and  such that:
   \begin{itemize}
   \item[a.] $\psi_j$ is defined and univalent on $U_j$; 
   \item[b.] $\psi_j(U_j)\subset U_{j-1}$;
   \item[c.] $\bigcup_j U_j\cap \bigcup_{s\in S_B}f^{n(s)+1}(s)=\bigcup_j U_j\cap (f(\PP_B)\setminus\PP_B)=\emptyset.$
\end{itemize}     
\end{lem}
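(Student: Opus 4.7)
\emph{Setup and separation from forbidden sets.} Since cases (1) and (2) of Proposition~\ref{prop: eventually either univalent fp} are excluded, the argument of that proposition yields that $\PP_B(f)$ is bounded, whence $E_1:=\bigcup_{s\in S_B}f^{n(s)+1}(s)$ and $E_2:=f(\PP_B)\setminus\PP_B$ are bounded subsets of $\C$. The first step is to verify that each ray $G_j$ meets the closed set $Z:=S(f)\cup E_1\cup E_2$ only (if at all) at its landing point. The key point is that any $z\in Z$ sitting on $G_j$ at a positive parameter $t_0$ would, under iteration by $f^p$, be pushed to infinity along $G_j$: since $f^p|_{G_j}$ is a homeomorphism fixing the ray and expanding the parameter $t$ away from the repelling landing point, we have $(f^p)^k(z)\to\infty$ along $G_j$ as $k\to\infty$. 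Unwinding $z=f^{n(s)+1}(s)$ (or, for a point of $E_2\subset\ov{E_1}$, a limiting version of this argument), this forces some singular orbit to escape along the cycle $G_0,\ldots,G_{p-1}$, contradicting the lemma's hypothesis. The landing points of the $G_j$ lie in the repelling orbit $\XX$, which is disjoint from $Z$.

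\emph{Construction of the $U_j$.} For each $j$, let $\zeta_j\in\XX$ be the landing point of $G_j$. Since $\zeta_j$ is a repelling fixed point of $f^p$ disjoint from $Z$, linearization gives an open round disk $\Delta_j$ centered at $\zeta_j$, disjoint from $Z$, on which the branch of $(f^p)^{-1}$ fixing $\zeta_j$ is univalent and strictly contracting. Choosing the $\Delta_j$ coherently --- first pick $\Delta_0$ small enough that the cyclic composition of the $\psi_j$ fixing $\zeta_0$ is a strict contraction on $\Delta_0$, then successively take $\Delta_{j-1}$ to be a round disk around $\zeta_{j-1}$ containing $\psi_j(\Delta_j)$ and still disjoint from $Z$ --- one arranges $\psi_j(\Delta_j)\subset\Delta_{j-1}$ for every $j$, and picks $\eta_j\in(0,\epsilon)$ with $G_j((0,\eta_j])\subset\Delta_j$. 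Away from the landing point, the arc $A_j:=G_j([\eta_j/2,T_j+1])$ is compact and disjoint from $Z$, so one may take a thin simply connected tubular neighborhood $W_j\supset A_j$ disjoint from $Z$; by monodromy, $\psi_j$ extends univalently on $W_j$. Set $U_j:=W_j\cup\Delta_j$, arranging the gluing so that $U_j$ is open, simply connected, and contains $G_j((0,T_j))$. Properties (a) and (c) then hold by construction.

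\emph{Enforcing (b) and main obstacle.} On the compact arc $A_j$ we have $\psi_j(A_j)\subset G_{j-1}\subset U_{j-1}$, so continuity of the univalent extension of $\psi_j$ on $W_j$ combined with compactness of $A_j$ gives $\psi_j(W_j)\subset U_{j-1}$ provided $W_j$ is taken thin enough; the inclusion $\psi_j(\Delta_j)\subset\Delta_{j-1}\subset U_{j-1}$ was arranged in the previous step. Combining these two inclusions yields (b). The main difficulty is the \emph{simultaneous} coordination of the $p$ neighborhoods so that the contraction regime on the disks and the compact-arc regime on the tubes fit together consistently while $Z$ remains outside; since $p$ is finite, this can be carried out by a finite inductive shrinking of the tubular radii and the disk sizes. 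The hypothesis that no singular value escapes along periodic rays is precisely what ensures that this shrinking process never conflicts with condition (c).
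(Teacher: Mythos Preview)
There is a genuine error in your setup: you write ``let $\zeta_j\in\XX$ be the landing point of $G_j$'', but the landing points $\zeta_j$ are \emph{not} in $\XX=\{z_0,\dots,z_{m-1}\}$. The orbit $\XX$ consists of \emph{interior} fixed points of $f^p$ (that is the whole premise of the surrounding argument), whereas $G\subset\partial B_0$ is a boundary ray, and its landing point is a non-interior fixed point of $f^p$, necessarily distinct from every $z_i$. This misidentification matters because you use it to assert both that $\zeta_j$ is repelling and that $\zeta_j\notin Z$. Neither is guaranteed: $\zeta_j$ may well be parabolic (so your linearizing-disk step fails as written), and nothing in the hypotheses rules out $\zeta_j\in S(f)$ or $\zeta_j=f^k(s)$ for some singular value $s$ --- since $\zeta_j$ is periodic, its forward orbit does not escape, and the no-escaping hypothesis says nothing about it. Hence your ``open round disk $\Delta_j$ centered at $\zeta_j$, disjoint from $Z$, with strict contraction of $(f^p)^{-1}$'' may simply not exist.

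The paper sidesteps this by never anchoring the construction at the landing point: it takes $U_j$ to be a thin simply connected neighborhood of the compact arc $G_j([\epsilon,T_j])$ that also contains the open arc $G_j((0,T_j))$, using only that the open ray avoids $S(f)$ and that $f'(\zeta_{j-1})\neq 0$ furnishes a local inverse at the endpoint; property (b) then comes from continuity and a finite shrinking, with no need for linearization or for $\zeta_j\notin Z$. Your tube $W_j$ over the compact part is fine --- it is the disk component $\Delta_j$ that should be dropped or replaced. Separately, your route to (c) is legitimate and in fact more direct than the paper's: you argue that no forward iterate of a singular value can lie on $G_j$ (else it escapes along a periodic ray), so the compact set $\bigcup_{k\leq N+1}f^k(S(f))\supset\overline{E_1\cup E_2}\cup S(f)$ has positive distance from each compact arc $A_j$. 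The paper instead argues dually, pulling back the rays and showing that $S_B$ misses the finitely many preimage rays of $G$ meeting a fixed disk.
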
 

\begin{proof}
Notice first that we can take neighborhoods of $G_j((\epsilon, T_j))$ which do not intersect $S(f)$. If not there would be a sequence of singular values in $S(f)$ converging to a point $z\in  G_j([\epsilon, T_j])$, which would need to be a singular value because $S(f)$ is closed. This contradicts the  assumption that $G_j$ does not contain singular values.  This shows that we can take a neighborhood of $G_j([\epsilon, T_j])$ for every $\epsilon$. By taking the union over them, we obtain a neighborhood of $G_j((0, T_j])$.

Each $\psi_j$ is defined on compact subsets of $G_{j}$ containing the landing point, hence in particular, it is defined on $G([0, T_j])$.  
  Since $G_j$ contains no singular values for $f$ by assumption,  lands, and  $\psi_j(G_j)\subset G_{j-1}$,  for each $j$ there is a simply connected  neighborhood $U_j$ of  $G_j([\epsilon, T_j])$ which contains $G_j((0,T_j))$,  does not intersect the set $S(f)$ (recall that the latter is closed and that periodic rays contain no singular values), and such that $\psi_j$ is well defined on  $U_j$ and $\psi_j(U_j)\subset U_{j-1}$, with   $T_0> T$. 
  
  Let $N=\max_{s\in S_B}n(s)$. Since cases (1) and (2) in Proposition~\ref{prop: eventually either univalent fp} do not hold, $N<\infty$.
  
Since $G$ contains no points in singular orbits, $S_B$ does not intersect any preimage of $G$, and in particular it does not intersect the first $N$  preimages of $G$, that is the infinitely many rays preperiodic to $G$ of preperiod at most $N$.  Since $G$ lands and contains no points in singular orbits, they all land and only finitely many of them intersect any given compact set.  
    
 Let $D\Supset S(f)$ be a closed disk and let $K$ be the compact set given by the first $N$  preimages of rays in the boundary of $B$ intersected with $D$ (compare with the proof of Proposition~\ref{prop: eventually either univalent fp}).
 
Since $S_B$ does not intersect $K$ and both are compact sets, we can find a neighborhood $W$ of $K$ which does not intersect $S_B$, and restrict the sets $U_i$ such that their preimages up to level $N$ do not intersect $W$.
\end{proof} 
The condition that periodic rays do not contain points in singular orbits   can be relaxed by assuming that they do not contain forwards iterates of singular values in $S_B$ which moreover follow the correct itinerary between the basic regions in $B$.

\begin{lem}[Shrinking of fundamental pieces]\label{lem:shrinking of fundamental pieces}
Let $f\in\Brays$ such that periodic rays land and assume that there are no singular values escaping along  periodic rays. Let  $z_0,m, B, B_i$ as in Theorem~\ref{thm:main theorem for fp}. 
  Suppose that case $\mathrm{(1)}$ and $\mathrm{(2)}$ in Proposition~\ref{prop: eventually either univalent fp} do not hold. 
  
 Let $K$ be a compact set and consider all fundamental pieces $P_n(\s)$ for $n\in\N$. 
Then for any $\epsilon>0$ there exists $N_\epsilon$ such that 
\[\diam_{\eucl} P_n(\s)<\epsilon \text{ for all $n\geq N_\epsilon$ and all  $\s$ such that $P_n(\s)\cap K\neq\emptyset$.  }\]  
\end{lem}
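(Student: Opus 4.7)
The plan is to apply the Euclidean Shrinking Lemma (Lemma~\ref{lem:Shrinking Lemma Euclidean}) via the structure of fundamental pieces exposed in Lemma~\ref{lem:properties of fundamental pieces}. By (\ref{eqtn:fundamental pieces mapping to disk}), each fundamental piece satisfies $f^{mn}(P_n(\s))=\tau_1(\sigma^{mn}\s)\cap D$; by case~(3) of Proposition~\ref{prop: eventually either univalent fp}, $f^{mn}$ is univalent on $\tau_{n+1}(\s)\supset P_n(\s)$ with image $\tau_1(\sigma^{mn}\s)$. So every $P_n(\s)$ is a univalent $f^{mn}$-preimage of the bounded set $\tau_1(\sigma^{mn}\s)\cap D\subset D\setminus \overline{D_r}$.

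First I would observe that the set of possible image sets $\tau_1(\sigma^{mn}\s)\cap D$ is finite. The set $\Omega_r=\C\setminus(\overline{D_r}\cup\delta_r)$ has locally connected boundary, so by Lemma~\ref{lem:finitely many preimages intersect compact sets} only finitely many components of $f^{-1}(\Omega_r)$ meet $\overline{D}$; consequently there are only finitely many level-$1$ tails $\tau_1(\s'_1),\ldots,\tau_1(\s'_L)$ in $B_0$ whose intersection with $D$ is nonempty. For every $P_n(\s)$ meeting $K$, the image $f^{mn}(P_n(\s))$ must equal one of the sets $V_i:=\tau_1(\s'_i)\cap D$.

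Next I would build, for each $i$, a simply connected enlargement $V_i\Subset V'_i$ such that every inverse branch of $f^{mn}$ mapping $\tau_1(\s'_i)$ into $\tau_{n+1}(\s)$ (along the branch realizing $\sigma^{mn}\s=\s'_i$) extends univalently to $V'_i$. The boundary $\partial V_i$ splits into three kinds of pieces: pieces of preimages of $\partial D_r\cup\delta_r$ and of $\partial F_0$ (where $F_0$ is the fundamental domain containing $\tau_1(\s'_i)$), which can be crossed by staying inside $\tau_2(\s'_i)$, where univalence of $f^{-mn}$ along the chosen branch is automatic; and portions of $\partial B_0$ (periodic boundary rays of the basic region), which are the delicate ones and are crossed using Lemma~\ref{lem:good neighborhoods}. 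The hypotheses of that lemma are available thanks to our standing assumptions that periodic rays land and that no singular value escapes along a periodic ray.

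Finally, I would apply Lemma~\ref{lem:Shrinking Lemma Euclidean} to each pair $(V_i,V'_i)$ with the compact set $K$ and iterates $mn$, obtaining thresholds $N_i$ so that any univalent preimage of $V_i$ inside a univalent preimage of $V'_i$ meeting $K$ has Euclidean diameter less than $\epsilon$ once $mn>N_i$; every $P_n(\s)$ meeting $K$ is such a preimage. Setting $N_\epsilon:=\max_i\lceil N_i/m\rceil$ and handling the finitely many pieces $P_n(\s)$ with $n\leq N_\epsilon$ and $P_n(\s)\cap K\neq\emptyset$ individually (their finiteness follows from Lemma~\ref{lem:finitely many preimages intersect compact sets} applied level by level) completes the argument. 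The main technical obstacle is the construction of the enlargements $V'_i$: one must widen $V_i$ across every type of boundary piece in a way compatible with all univalent pullbacks $f^{-mn}$ simultaneously, and this is exactly where the hypothesis on singular orbits enters, via Lemma~\ref{lem:good neighborhoods}.
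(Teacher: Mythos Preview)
Your proposal is correct and follows the same approach as the paper: show via (\ref{eqtn:fundamental pieces mapping to disk}) that each $P_n(\s)$ is a univalent $f^{mn}$-preimage of one of finitely many bounded subsets of level-$1$ tails, enlarge each such subset to a simply connected $V'$ on which all the relevant inverse branches extend (using Lemma~\ref{lem:good neighborhoods} to cross $\partial B_0$, which you correctly identify as the main obstacle), and apply Lemma~\ref{lem:Shrinking Lemma Euclidean}. The paper's execution differs only cosmetically---it uses crosscut-bounded pieces $\eta_\tau\subset\tau$ rather than $\tau_1\cap D$ and carries out the extension of $\phi_\lambda^n$ by decomposing it into single inverse branches $h_{nm}\circ\cdots\circ h_1$ and tracking at each step whether the image meets $\partial B$; one slip in your sketch is the phrase ``staying inside $\tau_2(\s'_i)$'' for the non-$\partial B_0$ boundary pieces, since those are handled by staying in $\tau_1$ itself or in $B_0\setminus\PP_B$, not via any level-$2$ tail.
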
 
Compare with the proof of  Lemma 6.3 in \cite{BRG17}.

 \begin{proof} Since  case $\mathrm{(1)}$ and $\mathrm{(2)}$ in Proposition~\ref{prop: eventually either univalent fp} do not hold, $\PP_B$ is bounded, and so is its image $f(\PP_B)$ (which is no longer necessarily contained in $B$). Let    $D_r\Supset (\PP_B \cup \bigcup_{k\leq m} f^k(\PP_B))$  be a disk of radius $r$ centered at $0$. Consider the set of tails $\TT_1$ of level $1$ for $r$, $z_0$. Notice that $\PP_B$ is forward invariant in the sense of (\ref{eqtn:forward invariance}). It follows that $\tau_n(\s)\cap \PP_B=\emptyset$ for all $n\in\N$.
  
 For each of the finitely many $\tau\in\TT_1$ which intersect $D_r$ let $\gamma_\tau$ be a crosscut of $\tau$ such that $\tau\setminus\gamma_\tau$ is made of  two connected components, one of which is bounded and contains all of the connected components of $\tau\cap D_r$. Call this component $\eta_\tau$. 
Let 
$$\VV= {\bigcup_{\tau\in\TT_1}\eta_\tau}.$$

Since finitely many $\tau\in\TT_1$ intersect $D_r$ (see Lemma~\ref{lem:finitely many preimages intersect compact sets}) we have that $\VV$ has finitely many connected components.  
  Notice that if two adjacent tails $\tau,\tilde{\tau}$ both intersect $D_r$, their bounded components    $\eta_\tau, \eta_{\tilde{\tau}}$ belong to the same connected component of $\VV$. 
 
  We first  claim that   any $P_n(\s)$  is contained in a  connected components of $f_\lambda^{-n}(V_i)$ for some $i$ and some branch $\lambda$. 
   This is implied by showing that $f^{mn}(P_n(\s))\subset \eta_\tau$ for some $\tau\in\TT_1$.  
 Let   $\zeta\in P_n(\s)= \tau_{n+1}(\s)\setminus \tau_n(\s)$.  By (\ref{eqtn:fundamental pieces mapping to disk}) $f^{mn}(\zeta)\in\tau\cap D_r$ for some $\tau\in\TT_1$. The fact  that $\tau$ does not depend on the choice of $\zeta$ (we have to check this because $P_n(\s)$ is not necessarily a connected set) follows from the fact that for any $U$ connected component of the  preimage of $\eta_\tau$ under $f^{-mn}$ which is contained in a tail of level $n+1$ (which is the case for fundamental pieces) we have that $f^m:U\ra\eta_\tau$ is a homeomorphism.

So it is enough to show that, for $V$ which is  any of the finitely  many connected components of $\VV$,  the diameters of inverse images of $V$ tend to zero uniformly in the family of  inverse branches used to define fundamental pieces.  
Let $V$ be such a component. 
 To fix notation for the inverse branches let us denote by $\mathcal{L}$ the set of inverse branches  $\phi^n_\lambda$ of $f^{mn}$ which map a component $\eta_\tau\subset V$ inside another tail $\tau_{n-1}(\s)$, and which are a priori defined only on  $V$. 

We claim that   there is a simply connected neighborhood $V'$ of $V$  such that 
for any $\phi^n_\lambda\in\mathcal{L}$   we have that $\phi^n_\lambda$ can be extended (univalently) to $V'$. 

 The claim is obvious for all   $V\Subset B_0$, since by choice of $r$ we can find a simply connected neighborhood $V'$ which is contained in $B_0\setminus\PP_B$ and hence all inverse branches $\phi_\lambda^n$ used to define fundamental pieces can be extended.

So let us  consider $V$ such that $\partial V\cap \partial B_0\neq\emptyset$. 
 Let  $G$ on $\partial B_0$ which intersects $\partial V$, and $T$ such that $\partial V\cap G\Subset G(0,T)$. In the following we will assume for simplicity that $G$ is unique, but if not, there are finitely many rays and  the reasoning has to be done for each of them.  
   
Let $\{G_0=G, G_j=f^{j}(G)\}_{j=0,\ldots p}$ be the orbit of $G$ under $f$ and let  $\psi_j: U_{j}\ra U_{j-1}$  as in Lemma~\ref{lem:good neighborhoods}. 

 Let $V'\subset (U_0\cup B_0)$ be a   simply connected neighborhood of $V$ which does not intersect $\PP_B$, and let $\phi_\lambda^n\in\LL$. Choose $V'$ so that in addition $V'\cap B$ and $V'\cap (\C\setminus B)$ are simply connected.
 
The inverse branch  $\phi_\lambda^n$ decomposes (uniquely) as
$$
\phi_\lambda^n=h_{nm}\circ \ldots\circ h_1
$$
where  each $h_i$ for $i=1\ldots nm$ is a branch of $f^{-1}$ that we want to show to be defined on 
$h_{i-1}\circ \ldots\circ h_1(V')$. Notice that $\phi_\lambda^n$ extends to $V'\cap B$ because the latter does not intersect $\PP_B$.
 
 We claim that the inverse branch $h_1$ is well defined and univalent on all of $V'$. Let us denote by $X$ the connected component of $f^{-1}(V)$ which contains $h_1(V)$. Either $X\Subset B$  or $X\cap \partial B\neq \emptyset$. 
 
 If $X\Subset B$ then  the branch $h_1$ is well defined and univalent  because by Lemma~\ref{lem:good neighborhoods} the neighborhoods $U_j$ do not intersect $f(S_B)\subset f(\PP_B)$. Since they also do not intersect $f(\PP_B)$,  the set $X$ does not intersect $\PP_B$. Since the latter is forward invariant in the sense of (\ref{eqtn:forward invariance}), the branches $h_i$ are well defined also  for all $i=2\ldots mn$, proving the claim.
 
Let us consider the case   $X\cap \partial B\neq \emptyset$. Since $h_1(V)\subset B$, by  Lemma~\ref{lem:forward invariant boundary} we have that

\begin{align*}
 f(X\cap B) &= V'\cap B       \\
 f(X\cap (\C\setminus B)) &= V'\cap (\C\setminus B)      \\
 f(X\cap\partial B) &= V'\cap \partial B
\end{align*}
It follows that $X\cap\partial B$ is contained in $G_{p-1}$ which is the only preimage of $G_0$ in $\partial B$.
  
  By univalency of $f$ on $G_{p-1}$,  $h_1$ extends continuously to $G_0\cap V'$ and coincides with $\psi_0$ on this set. Since $h_1$ extends holomorphically to a neighborhood of $V\cap G_0$ (since $U_0$ contains no singular values by choice),  by the identity principle $h_1$ equals $\psi_0$  and hence $h_1$ extends as a univalent map on all of $V'$.
  
 By property b. in Lemma~\ref{lem:good neighborhoods}, $h_1(V'\setminus B)\subset U_{p-1}$. 
This last property allows us to repeat the reasoning for $h_2$ and show that it is defined on $h_1(V')$.  Proceeding by induction this gives the claim. 
  
 By Lemma~\ref{lem:Shrinking Lemma Euclidean}, $\diam_{\eucl}(\phi^n_\lambda(V))$ uniformly in $\lambda$ as $n\ra\infty$, provided $\phi^n_\lambda(V)\cap K\neq\emptyset$ for some compact set $K$. 
For all addresses  $\s$ such that $P_n(\s)\cap K\neq\emptyset$, the claim of the Lemma  follows because  $P_n(\s)\subset \phi_\lambda^n(V)$ for some $\lambda, n, i$.
\end{proof}

 \begin{figure}[hbt!] 
\begin{center}
\def\svgwidth{\textwidth}
\begingroup%
  \makeatletter%
    \setlength{\unitlength}{\svgwidth}%
  \makeatother%
   \begin{picture}(1,0.28077441)%
    \put(0,0){\includegraphics[width=\unitlength]{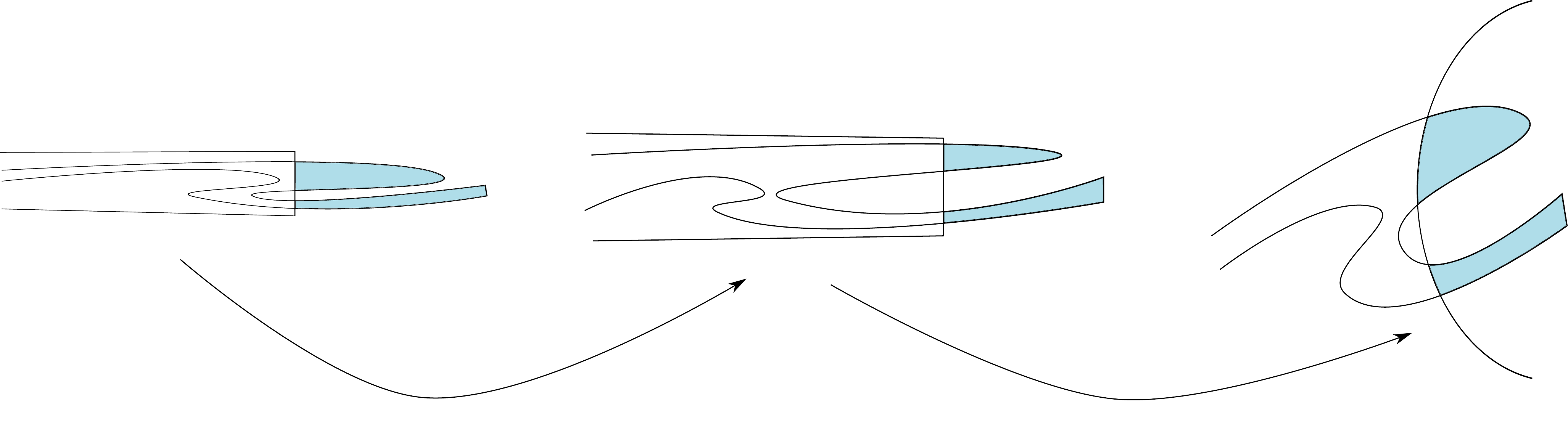}}%
    \put(0.25152843,0){\color[rgb]{0,0,0}\makebox(0,0)[lb]{\small{$f^{m(n-1)}$}}}%
    \put(0.71264726,0){\color[rgb]{0,0,0}\makebox(0,0)[lb]{\small{$f^m$}}}%
    \put(0.05590225,0.12){\color[rgb]{0,0,0}\makebox(0,0)[lb]{\small{$\tau_{n}(\s)$}}}%
    \put(0.21283314,0.12){\color[rgb]{0,0,0}\makebox(0,0)[lb]{\small{$\tau_{n+1}(\s)$}}}%
    \put(0.41275879,0.1){\color[rgb]{0,0,0}\makebox(0,0)[lb]{\small{$\tau_1(\sigma^{m(n-1)}\s)$}}}%
    \put(0.60623523,0.12){\color[rgb]{0,0,0}\makebox(0,0)[lb]{\small{$\tau_2(\sigma^{m(n-1)}\s)$}}}%
    \put(0.75,0.08){\color[rgb]{0,0,0}\makebox(0,0)[lb]{\small{$\tau_1(\sigma^{mn}(\s))$}}}%
    \put(0.95,0.05){\color[rgb]{0,0,0}\makebox(0,0)[lb]{\small{$D_r$}}}%
    \put(0.21283314,0.18){\color[rgb]{0,0,0}\makebox(0,0)[lb]{\small{$P_{n}(\s)$}}}%
    \put(0.6,0.19){\color[rgb]{0,0,0}\makebox(0,0)[lb]{\small{$P_1(\sigma^{m(n-1)}\s)$}}}%
  \end{picture}%
\endgroup%

\end{center}
\caption{\small Mapping properties of fundamental pieces (shown in blue).}
\label{fig:Shrinking Of Fundamental Pieces}
\end{figure}
The last result that we need in order to prove the Main Theorem is  Iversen's Theorem \cite{iversen}.  We state it as it is presented in \cite{BE08}. It is a consequence of Gross Star Theorem \cite{Gross}.    
 \begin{thm}[Iversen’s Theorem]Let $f$ be holomorphic. Let $\psi$ be a holomorphic branch of the inverse of $f$ which is  defined in a neighborhood of some point $z_0$ and let $\gamma:[0,1]\ra\C$  be a curve with
$\gamma(0)=z_0$. Then for every $\epsilon>0$  there exists a curve $\tilde\gamma:[0,1]\ra\C$ satisfying
$\gamma(0)=z_0$ and $|\gamma(t) - \tilde\gamma(t)| < \epsilon$ such that $\psi$ has an analytic continuation along $\tilde{\gamma}.$
 \end{thm}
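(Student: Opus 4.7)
The plan is to deduce Iversen's Theorem directly from Gross's Star Theorem, which is cited in the same sentence. Recall that Gross's Star Theorem asserts that, given a holomorphic branch $\psi$ of $f^{-1}$ defined near a point $w$, the set of angles $\theta \in [0,2\pi)$ for which $\psi$ admits analytic continuation along the entire ray $\{w + t e^{i\theta} : t \ge 0\}$ has full Lebesgue measure (equivalently, the set of obstructed directions has measure zero).

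First I would fix $\epsilon > 0$, choose an integer $N$ to be determined, and partition $[0,1]$ into $0 = t_0 < t_1 < \cdots < t_N = 1$ finely enough that $|\gamma(t_{k+1}) - \gamma(t_k)| < \epsilon/(3N)$ for each $k$ (possible by uniform continuity of $\gamma$). Then I would construct $\tilde\gamma$ inductively as a polygonal path. Set $w_0 = z_0$. Given $w_k$ close to $\gamma(t_k)$ and a holomorphic branch of $\psi$ defined on a neighborhood of $w_k$, apply Gross's Star Theorem at $w_k$: since the set of admissible angles has full measure, I can pick $\theta_k$ arbitrarily close to $\arg(\gamma(t_{k+1}) - w_k)$ such that $\psi$ extends analytically along the entire ray from $w_k$ in direction $\theta_k$. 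Let $w_{k+1}$ be the point on this ray at distance $|\gamma(t_{k+1}) - w_k|$ from $w_k$, and let the segment from $w_k$ to $w_{k+1}$ form the $k$-th piece of $\tilde\gamma$.

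Finally, concatenating these segments and reparametrizing so that $\tilde\gamma(t_k) = w_k$, one obtains a curve along which $\psi$ admits analytic continuation by construction. The approximation bound $|\tilde\gamma(t) - \gamma(t)| < \epsilon$ follows from controlling the deviation $|w_{k+1} - \gamma(t_{k+1})|$ at each step: if the angle $\theta_k$ is chosen within $\epsilon/(3N \cdot \mathrm{length}(\gamma))$ of the ideal angle, the local error introduced is at most $\epsilon/(3N)$, and a triangle-inequality estimate shows the cumulative drift $|w_k - \gamma(t_k)|$ stays below $k\epsilon/N \le \epsilon$.

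The main obstacle is the cumulative drift: at each step the basepoint $w_k$ is displaced from $\gamma(t_k)$, so the direction I want to continue in is not the tangent direction of $\gamma$ but the chord direction toward $\gamma(t_{k+1})$ from the drifted basepoint, and the error could a priori compound geometrically. Gross's Star Theorem is what lets me neutralize this: because the good directions are dense (in fact of full measure) in every neighborhood of the ideal direction, the angular perturbation at each step can be made as small as desired independently of what happened at previous steps, so the drift accumulates only additively and can be kept below $\epsilon$ by taking $N$ large and the angular perturbations correspondingly small.
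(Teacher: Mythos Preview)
The paper does not prove Iversen's Theorem: it merely states it (citing Iversen and Bergweiler--Eremenko) and remarks that it is a consequence of Gross's Star Theorem. So there is no proof in the paper to compare against; your derivation from Gross is exactly the route the paper points to, and it is the standard one.

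Your strategy is correct, but the quantitative bookkeeping as written does not work. The partition condition $|\gamma(t_{k+1})-\gamma(t_k)|<\epsilon/(3N)$ is impossible for a generic curve: summing over $k=0,\dots,N-1$ and using the triangle inequality gives $|\gamma(1)-\gamma(0)|<\epsilon/3$, which fails as soon as the endpoints of $\gamma$ are more than $\epsilon/3$ apart. What you want is simply to choose $N$ large enough that $|\gamma(s)-\gamma(t)|<\epsilon/3$ whenever $|s-t|\le 1/N$ (uniform continuity), with no $N$ in the denominator.

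A second remark: your drift estimate is more pessimistic than necessary, and this is what led you to force the spurious factor of $N$. Because at step $k$ you aim from $w_k$ directly at $\gamma(t_{k+1})$ and travel distance $|\gamma(t_{k+1})-w_k|$, a perfect angle would give $w_{k+1}=\gamma(t_{k+1})$ \emph{regardless of the previous drift}. Hence
\[
|w_{k+1}-\gamma(t_{k+1})|\;\le\; |\gamma(t_{k+1})-w_k|\cdot 2\sin(\delta_k/2)\;\le\;\big(|\gamma(t_{k+1})-\gamma(t_k)|+|w_k-\gamma(t_k)|\big)\,\delta_k,
\]
so if all step lengths are at most $\ell$ and all angular errors at most $\delta<1/2$, the drift satisfies $e_{k+1}\le(\ell+e_k)\delta$, giving $e_k\le \ell\delta/(1-\delta)<2\ell\delta$ uniformly in $k$. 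The errors do not accumulate over $N$ steps; you just need $\ell$ comparable to $\epsilon$ and then $\delta$ small. With these two fixes the argument goes through.
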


 \subsection*{Proof of Theorem~\ref{thm:main theorem for fp}}

Let $z_0, z_i$, $m,p,B, B_i$ be as in the statement. Recall that indices of $z_i$ and $B_i$ are taken modulo $m$. 
  If $z_0$ is not an interior fixed point for $f^p$, there is nothing to show because it is the landing point of a periodic ray of period at most $p$. Otherwise, in view of Proposition~\ref{prop: eventually either univalent fp}  we need to show that, if there exists $r>0$ such that  for  any $n\geq 1$ all fundamental   tails for $z_0$ are well defined, then $z_0$ is the landing point of a periodic ray.  Without loss of generality up to taking a larger $r$ we can assume that $\bigcup_i z_i\subset D_r$ and that tails are also defined for a slightly smaller $r$. Let $\TT_n$ denote the collection of tails of level $n$ for these choices.
  
Let $\psi$ be the inverse branch of $f^{-m}$ fixing $z_0$. 
Let $U_0\subset B_0$ be a simply connected  neighborhood of $z_0$    such that  $\psi$ is well defined  on $U_0$,   $\psi (U_0)\Subset U_0$, and there exists  $\eta>1$ such that $|(f^m)'(z)|\geq \eta$ for all $z\in U_0$.  Note that $f^i(\psi(U_0))\subset B_i$ for $i\leq m$, and that more generally for $\ell\in\N$ we have $f^i(\psi^\ell(U_0))\subset B_i$ for $i\leq m\ell$.

Let
 $$U_n:=\psi^n (U_{0}), \ \ \epsilon=\dist(\partial U_0, \partial U_1).$$
  By choice of $U_0,$ $f^{mn}:U_n\ra U_0$ is a homeomorphism.

\begin{subclaim}$\TT_n \cap U_0\neq \emptyset$ for all   $n$ large enough.  
\end{subclaim}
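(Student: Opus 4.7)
The plan is to find, for each sufficiently large $n$, a point $\zeta_n$ that simultaneously lies in $U_0$ and in some fundamental tail of level $n$. A natural candidate is a tail whose address follows the $f$-itinerary of $z_0$. Explicitly, I would choose the periodic address $\hat{\s} = F_{z_0}F_{z_1}\cdots F_{z_{m-1}}\overline{F_{z_0}F_{z_1}\cdots F_{z_{m-1}}}$, where $F_{z_i}\in\FF_B$ is the fundamental domain containing a neighborhood of $z_i=f^i(z_0)$. Since we are in case $(3)$ of Proposition~\ref{prop: eventually either univalent fp}, we have $\hat{\s}\in\AA_B$, so $\tau_n(\hat{\s})$ is well-defined for every $n$, and the branch of $f^{-m}$ singled out by the first $m$ digits of $\hat{\s}$ agrees on $U_0$ with the contracting inverse $\psi$.

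Next I would fix a point $\zeta^*\in\tau_1(\hat{\s})\cap B_0$ and a curve $\gamma\colon[0,1]\to B_0$ with $\gamma(0)=z_0$ and $\gamma(1)=\zeta^*$. For each $n$, Iversen's Theorem applied to the germ $\psi^{n-1}$ of $f^{-m(n-1)}$ at $z_0$ and to the curve $\gamma$ yields a perturbation $\tilde\gamma_n$ of $\gamma$ along which $\psi^{n-1}$ admits an analytic continuation. By shrinking the perturbation sufficiently, I can arrange $\tilde\gamma_n\subset B_0$, $\tilde\gamma_n(1)\in\tau_1(\hat{\s})$, and an analytic extension of $\psi^{n-1}$ to a fixed open set $V'\Subset\tau_1(\hat{\s})$ containing $\zeta^*$. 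I then set $\zeta_n:=\psi^{n-1}(\tilde\gamma_n(1))$ and aim to show $\zeta_n\in U_0\cap\tau_n(\hat{\s})$ for all $n$ large.

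Two verifications remain. First, $\zeta_n\in\tau_n(\hat{\s})$: the image curve $\psi^{n-1}(\tilde\gamma_n)$ issues from $z_0\in B_0$, and by forward invariance of $\partial B_0$ together with Lemma~\ref{lem:forward invariant boundary} it cannot cross $\partial B_0$, so in particular $\zeta_n\in B_0$; moreover the full itinerary of $\zeta_n$ under iterates of $f$ matches $\hat{\s}$ by continuity of the chosen branch, placing $\zeta_n$ in the correct tail. Second, $\zeta_n\to z_0$: the univalent preimages of a set $V\Subset V'$ produced by the Iversen extensions shrink in Euclidean diameter by the Shrinking Lemma (Lemma~\ref{lem:Shrinking Lemma Euclidean}) applied with a fixed compact neighborhood $K$ of $z_0$; since each image curve $\psi^{n-1}(\tilde\gamma_n)$ begins at $z_0$ and contains $\zeta_n$, the relevant preimages meet $K$, forcing $\zeta_n\in U_0$ eventually.

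The main obstacle is the second verification: the analytic continuation produced by Iversen's Theorem is a priori global and could in principle send $\zeta_n$ off to the unbounded end of $\tau_n(\hat{\s})$ instead of towards $z_0$. Lemma~\ref{lem:forward invariant boundary} rules out leakage through $\partial B_0$, but to rule out escape to infinity I expect to iterate the Iversen extension step by step, arranging at each stage that the image curve enters deeper into $U_j=\psi^j(U_0)$; once the curve enters $U_0$, the uniform contraction $|(f^m)'|\ge\eta>1$ combined with the Shrinking Lemma yields the Euclidean shrinking needed to place $\zeta_n$ inside $U_0$. The remainder of the argument — identifying the correct itinerary and checking that the branches match $\psi$ near $z_0$ — is routine bookkeeping on basic regions and fundamental domains.
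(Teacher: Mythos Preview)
Your proposal has a genuine gap, precisely the one you flag as the ``main obstacle'': nothing in your argument prevents the Iversen continuation from sending $\zeta_n$ towards the unbounded end of the tail. The attempt to close this via the Shrinking Lemma is circular. Lemma~\ref{lem:Shrinking Lemma Euclidean} controls only those preimages of $V$ that already meet the compact set $K$; the preimage containing $\zeta_n$ is the piece of $\tau_n(\hat{\s})$ lying over $V\Subset\tau_1(\hat{\s})$, and it does not contain $z_0$ (indeed $z_0\in D_r$ lies in no tail at all). The curve $\psi^{n-1}(\tilde\gamma_n)$ does begin at $z_0$, but only its endpoint lies in the relevant preimage, so you have no independent reason for that preimage to meet $K$. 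There is also a smaller issue upstream: the points $z_i$ may lie in $\overline D$, in which case the fundamental domains $F_{z_i}$ do not exist, and even when they do, case~(3) of Proposition~\ref{prop: eventually either univalent fp} asserts univalency of tail pullbacks, not that your particular $\hat{\s}$ belongs to $\AA_B$.

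The paper resolves the obstacle by a mechanism you are missing: Montel's theorem. Since $U_0$ meets the Julia set, the iterates $f^{nm}(U_0)$ eventually cover any non-exceptional point $\zeta\in F\cap B_0$; one fixes such an $n$ and takes the Iversen perturbation small enough that $\tilde\gamma(1)\in f^{nm}(U_0)$, which is what places $w=\psi^n(\tilde\gamma(1))$ back in $U_0$. No shrinking estimate is needed, no particular address is singled out, and a \emph{single} level suffices: once $\TT_N\cap U_0\neq\emptyset$ for one $N$, applying the contraction $\psi$ (which maps $U_0$ into itself and raises the tail level by one) immediately gives $\TT_n\cap U_0\neq\emptyset$ for all $n\ge N$.
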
     
\begin{subproof} Let $F$ be a fundamental domain for $f$ which intersects $B_0$ and
choose   
 $\zeta\in F\cap B_0$ not an exceptional value. Then by Montel's theorem there exists $n$ large enough  so that $f^{nm}(U_0)\ni \zeta$.   Since $f$ is open, there exists $\epsilon'$ such that the Euclidean disk  $\D_{\epsilon'}(\zeta) \subset f^{nm}(U_0)\cap B_0\cap F$ and contains no exceptional values. 
 
 Let  $\gamma:[0,1]\ra B_0$  be a curve with
$\gamma(0)=z_0$, $\gamma(1)=\zeta$. Let $\psi^n$ be the inverse branch of $f^{nm}$ fixing $z_0$. By Iversen's theorem   there exists a curve $\tilde\gamma:[0,1]\ra B_0$ such that 
$\tilde{\gamma}(0)=z_0$,  $\tilde{\gamma}(1)\in \D_{\epsilon'}(\zeta)$ and  such that $\psi^n$ has an analytic continuation along $\tilde{\gamma}.$ Hence  $\tilde{\gamma}(1)\in f^{nm}(U_0)$.   
 
 Since  $\psi^n$ has an analytic continuation along $\tilde{\gamma}$ we can ensure that  the same is true for $\psi^j$ for $j\leq n$.  Since $\tilde{\gamma}\cap\partial B=\emptyset$ we have that   $\psi^j(\tilde{\gamma})\cap\partial B=\emptyset$ for $j\leq n$ (see Lemma~\ref{lem:forward invariant boundary}). Recall   that we have $f^i(\psi^\ell(U_0))\subset B_i$   for  all $\ell\in\N$ and all  $i\leq m$. Hence 
  for any $n\in\N$  we have that  $f^j\psi^n(\tilde{\gamma})\subset B_j$ for  $j\leq m$. This implies that the point   $w=\psi^n(\tilde{\gamma}(1))$  belongs to some tail  $\tau_{n+1}(\s)$ for some $\s$.  Since  $\tilde{\gamma}(1)\in f^{nm}(U_0)$, and $\psi^n$ is a homeomorphism, we also have that $w\in U_0$, proving the fact that   $\TT_n \cap U_0\neq \emptyset$ for   $n$ large enough.
       
      Observe that if $\TT_N \cap U_0\neq\emptyset$ for some $N$, then $\TT_n \cap U_0\neq\emptyset$ for all $n\geq N$, because the preimage  under $\psi$  of a point $\zeta$ which belongs to  a tail in $\TT_n $ intersecting $U_k$ is a point $\psi z$ in a tail in $\TT_{n+1} $ intersecting $U_{k+1}$ (see also Lemma \ref{lem:images of tails fp}).   
\end{subproof}

Let  $N$ be  such that for all $n\geq N$ we have that  $\TT_n \cap U_0\neq\emptyset $ and  that  $\diam P_n(\s)<\epsilon$ for all  $P_n(\s)$ intersecting $\ov{U_0}$.  Such $N$ exists by Lemma~\ref{lem:shrinking of fundamental pieces}. 
     
 For $n\geq N$ let $\SS_n$    be the set of finite addresses 
      of length   $\ell_n$   (see Definition~\ref{defn:addresses of tails})   for which the tail $\tau_{n}(\s)$ intersects $U_{n-N}$. Observe that  
     $\SS_n$ is       finite for every $n$ by Lemma~\ref{lem:finitely many preimages intersect compact sets}.

   Observe that $\dist(\partial U_n,z_0)\ra0$ as $n\ra\infty$ because in $U_0$ the map $\psi$ is contracting by a factor $\eta^{-1}<1$. So by Lemma~\ref{lem:characterization of landing} it is enough to find a periodic address $\s_*$ of
  period $mq$ 
   and a point $\zeta\in U_0$ such that $\zeta_i(\s_*)$ is well defined for all $i$ and such that $\zeta_{qn-N}(\s_*)\in U_{qn-N}$ in order to ensure that $G_{\s_*}$ lands at $\lim_{n\ra\infty}\zeta_{qn-N}(\s_*)=z_0$.  
   
     We now claim that, for some $n_0$ large enough,  $\psi$ induces a well defined map $\Gamma$ from the finite set $\SS_{n_0}$ into itself, implying that  $\Gamma$ has a periodic point of some period $q$.  We do this in several steps.

      Let $n\geq N$,   $\s\in\SS_n$ and let $\tau_n(\s)$ be a tail in $\TT_n$ which intersects $U_{n-N}$. 
      For every point $\zeta\in \tau_n(\s)\cap U_{n-N}$ the point $\psi\zeta\in U_{n-N+1}$ is well defined and belongs to some tail of level $n+1$ and address $\tilde{\s}(\zeta)$ depending on $\zeta$. Indeed, $\tau_n(\s)\cap U_{n-N}$ may have several connected components, and it is not clear a priori that $\psi$ maps each of these connected components to the same tail of level $n+1$.  
      What is clear however is that for each such $\tilde{\s}=\tilde{\s}(\zeta)$ 
the tail $\tau_{n+1}(\tilde{\s})$ intersects $U_{n-N+1}$ and that $\sigma^m\tilde{\s}=\s$ (see Lemma~\ref{lem:images of tails fp}). 
 Recall that for an address $\tilde{\s}$ we denote by $\pi_n\tilde{\s}$ its first $\ell_n$ entries. 
       \begin{subclaim}\label{claim:main inductive step}
     $\pi_n\tilde{\s}$ belongs to $\SS_{n}$ regardless of the choice of $\zeta$. \end{subclaim}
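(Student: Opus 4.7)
My plan is to verify the two things the phrase ``regardless of $\zeta$'' encodes: (i) $\tau_n(\pi_n\tilde\s)\cap U_{n-N}\neq\emptyset$, so that $\pi_n\tilde\s\in\SS_n$, and (ii) $\pi_n\tilde\s$ actually depends only on $\s$, which is what makes the map $\Gamma(\s):=\pi_n\tilde\s$ well-defined in the next step of the argument.

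For (i) I would use the nesting inclusion $\tau_{n+1}(\tilde\s)\subset\tau_n(\pi_n\tilde\s)\cup P_n(\tilde\s)$ from equation~\eqref{eqtn:strings of fundamental pieces} (applied with $n+1$ in place of $n$ and $N=n$). Applied to $\psi\zeta\in\tau_{n+1}(\tilde\s)\cap U_{n-N+1}$, this splits into two cases. If $\psi\zeta\in\tau_n(\pi_n\tilde\s)$ we are done since $U_{n-N+1}\subset U_{n-N}$. If instead $\psi\zeta\in P_n(\tilde\s)$, Lemma~\ref{lem:shrinking of fundamental pieces} applied with compact set $K=\ov{U_0}$ gives $\diam_{\eucl}P_n(\tilde\s)<\epsilon=\dist(\partial U_0,\partial U_1)$ for $n\geq N$, and combined with $\psi\zeta\in U_1$ this forces $P_n(\tilde\s)\subset U_0$; following a path outwards in the connected, unbounded tail $\tau_{n+1}(\tilde\s)$ from $\psi\zeta$ then produces a transition point in $\tau_{n+1}(\tilde\s)\setminus P_n(\tilde\s)\subset\tau_n(\pi_n\tilde\s)$, which the shrinking lets us keep inside $U_{n-N}$. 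For (ii) I would extend $\psi$ as a univalent branch of $f^{-m}$ to a simply connected neighborhood of the full connected tail $\tau_n(\s)$, using that $\tau_n(\s)\cap\PP_B=\emptyset$ by the choice of $r$ (as in the proof of Proposition~\ref{prop: eventually either univalent fp}) together with the postsingularly-clean neighborhoods from Lemma~\ref{lem:good neighborhoods} to handle the rays in $\partial B_0$. Once such an extension exists, $\psi$ maps the connected set $\tau_n(\s)$ into a single level-$(n+1)$ tail $\tau_{n+1}(\tilde\s^*)$, so $\tilde\s(\zeta)=\tilde\s^*$ uniformly in $\zeta$, and $\pi_n\tilde\s^*$ is intrinsic to $\s$.

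The main obstacle is locating the exit point of $\tau_{n+1}(\tilde\s)$ from the fundamental piece $P_n(\tilde\s)$ inside $U_{n-N}$ in the $P_n$-subcase of (i): the absolute diameter bound $<\epsilon$ from Lemma~\ref{lem:shrinking of fundamental pieces} does not by itself fit inside the exponentially shrinking neighborhood $U_{n-N}$. I expect the cleanest way out is to push the analysis through the extended univalent branch of $\psi$ from (ii) and verify directly that $\psi(\tau_n(\s)\cap U_{n-N})\subset\tau_n(\pi_n\tilde\s^*)\cap U_{n-N+1}$, thereby ruling out the $P_n$-case altogether for admissible $\zeta$ when $n\geq n_0$ is large enough.
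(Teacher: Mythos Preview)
You have misread the phrase ``regardless of the choice of $\zeta$'': it means \emph{for every} $\zeta$, not \emph{independently of} $\zeta$. The claim does not assert that $\pi_n\tilde\s(\zeta)$ is the same for all $\zeta$; that uniqueness is what Claim~\ref{claim:eventual injectivity} establishes later (and only for $n\geq n_0$). So your part (ii) is not needed here, and the machinery you propose for it (extending $\psi$ univalently along the entire unbounded tail $\tau_n(\s)$) is both unnecessary and delicate.

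For part (i) your case split is exactly the paper's, and you correctly locate the real difficulty: from $\diam P_n(\tilde\s)<\epsilon$ and $\psi\zeta\in U_{n-N+1}$ you only get $P_n(\tilde\s)\subset U_0$, whereas you need $P_n(\tilde\s)\subset U_{n-N}$ so that the connected tail $\tau_{n+1}(\tilde\s)$ is forced to meet $\tau_n(\tilde\s)$ inside $U_{n-N}$. Your proposed fix via (ii) does not close this gap either: even if $\psi$ extended to all of $\tau_n(\s)$, its image would lie in $\tau_{n+1}(\tilde\s^*)$, and $\psi\zeta$ could still fall into $P_n(\tilde\s^*)$ rather than $\tau_n(\pi_n\tilde\s^*)$. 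The paper's missing idea is a push-forward/pull-back trick: apply $f^{m(n-N)}$ to $\psi\zeta$, landing in $P_N(\sigma^{m(n-N)}\tilde\s)\cap U_1$; the diameter bound at the \emph{fixed} level $N$ then gives $P_N(\sigma^{m(n-N)}\tilde\s)\Subset U_0$; finally pull this back through the homeomorphism $\psi^{n-N}:U_0\to U_{n-N}$ (which agrees with the tail branch since both send $f^{m(n-N)}(\psi\zeta)$ to $\psi\zeta$) to conclude $P_n(\tilde\s)\Subset U_{n-N}$. This is how the shrinking estimate, which is uniform in level but not in scale, gets converted into containment in the exponentially small $U_{n-N}$.
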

  \begin{subproof} For an illustration of the proof of this claim see Figure~\ref{fig:main inductive step}.   Let $\psi\zeta\in U_{n-N+1}\cap \tau_{n+1}(\tilde{\s})$. Recall that  
$\tau_{n+1}(\tilde{\s})\subset \tau_{n}(\tilde{\s})\cup P_n(\tilde{\s})$ and that it is a connected set. If $\psi\zeta\in \tau_{n}(\tilde{\s}) $ it follows directly  that the first $\ell_n$ entries of  $\tilde{\s}$ are in $\SS_n$. 
      Otherwise $\psi\zeta\in P_n(\tilde{\s})\cap U_{n+1-N}$ hence 
       $f^{m(n-N)}\psi\zeta\in P_{N}(\sigma^{m(n-N)}\tilde{\s})\cap U_{1}$ (see  Lemma~\ref{lem:properties of fundamental pieces}).   By choice of $N$, $\diam P_{N}(\sigma^{m(n-N)}<\epsilon$, so  $P_{N}(\sigma^{m(n-N)}\tilde{\s})\Subset U_0$ and hence its $m(n-N)$-th pullback $ P_n(\tilde{\s})\Subset U_{n-N}$.  Since $\tau_{n+1}(\tilde{\s})$ is connected, $ \tau_{n}(\tilde{\s})$ intersects $U_{n-N}$ and hence the first $\ell_n$ entries of  $\tilde{\s}$ are in $\SS_n$.
      \end{subproof}

\begin{figure}[hbt!] 
\begin{center}
\def\svgwidth{0.8\textwidth}
 
\begingroup%
  \makeatletter%
    \setlength{\unitlength}{\svgwidth}%
  \makeatother%
  \begin{picture}(1,0.46303444)%
    \put(0,0){\includegraphics[width=\unitlength]{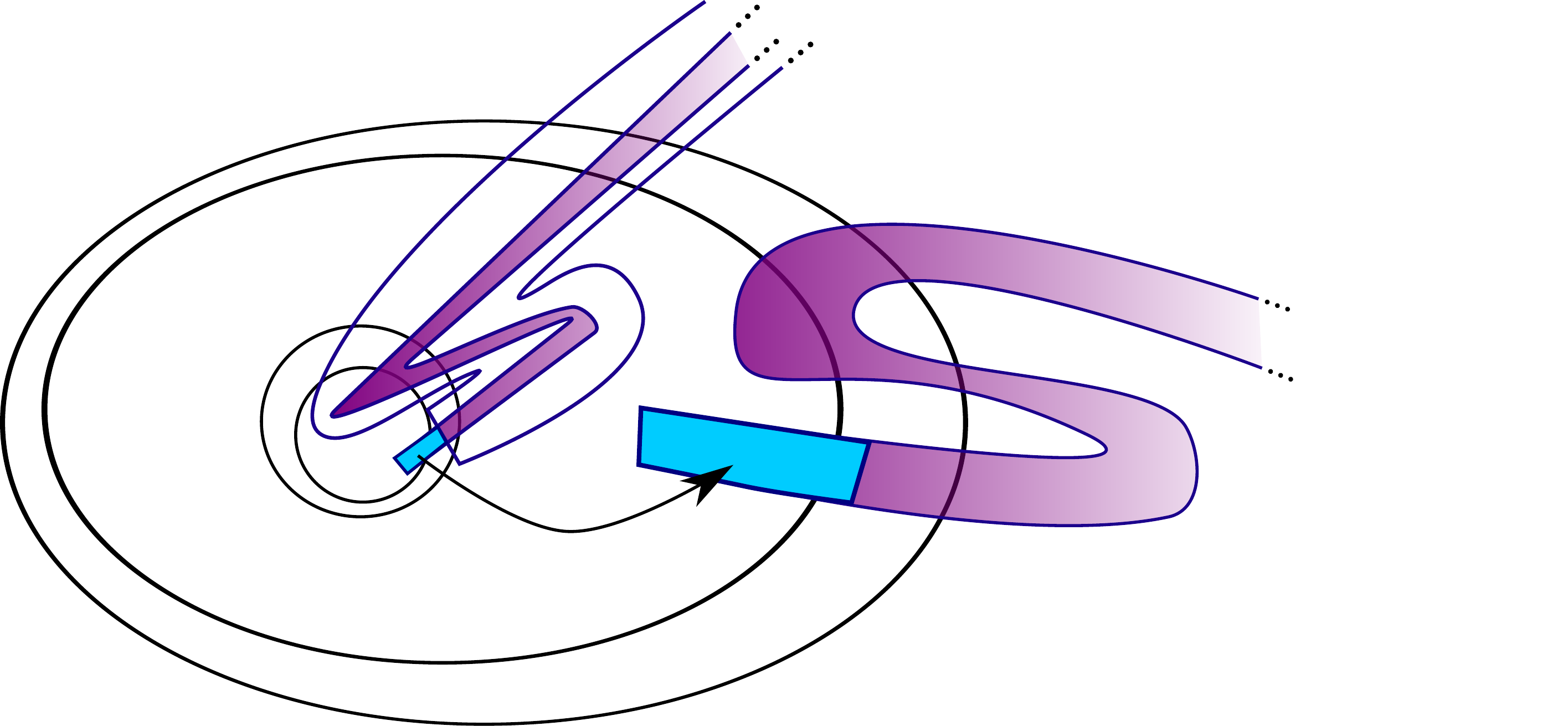}}%
    \put(0.51,0.1){\color[rgb]{0,0,0}\makebox(0,0)[lb]{\small{$U_1$}}}%
    \put(0.56,0.05){\color[rgb]{0,0,0}\makebox(0,0)[lb]{\small{$U_0$}}}%
    \put(0.07564212,0.1829143){\color[rgb]{0,0,0}\makebox(0,0)[lb]{\small{$U_{n-N}$}}}%
    \put(0.17274352,0.1){\color[rgb]{0,0,0}\makebox(0,0)[lb]{\small{$U_{N-n+1}$}}}%
    \put(0.75818295,0.09744429){\color[rgb]{0,0,0}\makebox(0,0)[lb]{\small{$\tau_{N+1}(\sigma^{m(n-N)})$}}}%
    \put(0.38678899,0.30030654){\color[rgb]{0,0,0}\makebox(0,0)[lb]{\small{$\tau_{n}(\s)$}}}%
    \put(0.51,0.4){\color[rgb]{0,0,0}\makebox(0,0)[lb]{\small{$\tau_{n+1}(\s)$}}}%
    \put(0.33841415,0.09){\color[rgb]{0,0,0}\makebox(0,0)[lb]{\small{$f^{mn}$}}}%
  \end{picture}%
\endgroup%
\end{center}
\caption{\small  }
\label{fig:main inductive step}
\end{figure}

 \begin{subclaim}\label{claim:eventual injectivity} There exists $n_0>0$ such that for $n\geq n_0$, the map $\sigma^m:\SS_{n+1}\ra\SS_{n}$ is injective, hence has a well defined inverse $\psi_n^*:\SS_{n}\ra\SS_{n+1}$ on its image $\sigma^m(\SS_{n+1})$.
 \end{subclaim}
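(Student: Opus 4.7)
My plan is to deduce injectivity of $\sigma^m:\SS_{n+1}\to\SS_n$ by first showing that $\sigma^m$ is always surjective, then bounding $|\SS_n|$ uniformly in $n$, so that the non-decreasing sequence $(|\SS_n|)_n$ stabilizes at some $n_0$; a surjection between finite sets of equal cardinality is automatically bijective, which gives the injectivity and the inverse $\psi_n^*$ on $\sigma^m(\SS_{n+1})$.

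For the surjectivity step, given $\s\in\SS_n$ pick any $\zeta\in\tau_n(\s)\cap U_{n-N}$. Since $\zeta\in U_{n-N}\subset U_0$, the image $\psi(\zeta)\in U_{n+1-N}$ is well-defined and satisfies $f^m(\psi(\zeta))=\zeta$. The relation $f^i(\psi^\ell(U_0))\subset B_i$ for $i\le m\ell$ gives $f^j(\psi(\zeta))\in B_j$ for $j\le m$, while $\zeta\in\tau_n(\s)$ handles the iterates beyond the first $m$. Hence $\psi(\zeta)$ belongs to a tail $\tau_{n+1}(\tilde{\s})$ with $\sigma^m\tilde{\s}=\s$, and since $\psi(\zeta)\in U_{n+1-N}$ we have $\tilde{\s}\in\SS_{n+1}$.

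For the boundedness step, the key observation is that $f^{m(n-N)}$ restricts to a homeomorphism from $\tau_n(\s)\cap U_{n-N}$ onto $\tau_N(\sigma^{m(n-N)}\s)\cap U_0$. This gives a well-defined shift $\SS_n\to\SS_N$, whose target is finite by Lemma~\ref{lem:finitely many preimages intersect compact sets}. To bound the fibers of this shift uniformly in $n$, I would combine the decomposition $\tau_n(\s)\subset\tau_N(\s)\cup\bigcup_{j=N}^{n-1}P_j(\s)$ from Lemma~\ref{lem:properties of fundamental pieces} with Lemma~\ref{lem:shrinking of fundamental pieces}: any two distinct $\s_1,\s_2\in\SS_n$ with the same iterated shift $\s'\in\SS_N$ differ in their first $m(n-N)$ entries, and so one of them passes through a fundamental piece $P_j$ whose forward image under an appropriate iterate of $f^m$ must meet the shrinking neighborhood $U_{n-N}=\psi^{n-N}(U_0)$. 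The diameter bound $\diam P_j<\epsilon=\dist(\partial U_0,\partial U_1)$ from Lemma~\ref{lem:shrinking of fundamental pieces}, combined with the geometric shrinkage of $U_{n-N}$ by the expansion factor $\eta>1$ of $f^m$ at $z_0$, leaves only finitely many such alternatives, uniformly in $n$.

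The main obstacle is exactly this uniform bound. A priori the sets $\tau_n(\s)\cap U_{n-N}$ may contain arbitrarily many connected components as $n$ grows, each of which could map under $\psi$ to a level-$(n+1)$ tail with a distinct first-$m$ prefix; ruling this out requires a careful interplay between the shrinking of fundamental pieces meeting $\overline{U_0}$ and the controlled contraction of the nested neighborhoods $U_{n-N}$ of $z_0$. Once the bound is secured, the sequence $|\SS_n|$ stabilizes at some $n_0$, $\sigma^m$ becomes a bijection for every $n\ge n_0$, and $\psi_n^*=(\sigma^m)^{-1}$ is well-defined on $\sigma^m(\SS_{n+1})$.
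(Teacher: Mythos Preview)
Your overall strategy is different from the paper's, and the part you yourself flag as ``the main obstacle'' is a genuine gap. The surjectivity of $\sigma^m:\SS_{n+1}\to\SS_n$ is fine and is essentially what the paper records in the paragraph preceding Claim~\ref{claim:main inductive step}. What is missing is the uniform bound on $|\SS_n|$. Your sketch for bounding the fibres of $\sigma^{m(n-N)}:\SS_n\to\SS_N$ does not amount to a proof: the sentence ``one of them passes through a fundamental piece $P_j$ whose forward image \ldots\ must meet $U_{n-N}$'' does not translate into a finite count, and the appeal to the contraction of the $U_{n-N}$ versus the shrinking of the $P_j$ is never made precise. In fact, carrying out that comparison rigorously requires exactly the kind of geometric control of fundamental pieces that already yields injectivity directly, so the detour through cardinalities gains nothing.

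The paper argues injectivity directly by proving a single geometric fact: for all sufficiently large $n$ and every $\tilde\s\in\SS_{n+1}$, the fundamental piece $P_n(\tilde\s)$ is contained in $U_1$. Granting this, if $\tilde\s_1\neq\tilde\s_2$ in $\SS_{n+1}$ had $\sigma^m\tilde\s_1=\sigma^m\tilde\s_2=\s$, then $P_n(\tilde\s_1),P_n(\tilde\s_2)\subset U_1$ would be two distinct $f^m$-preimages of $P_{n-1}(\s)$, contradicting that $f^m:U_1\to U_0$ is a homeomorphism. To obtain $P_n(\tilde\s)\subset U_1$, the paper first fixes $M$ with $\TT_N\cap U_M=\emptyset$ (possible since $z_0$ lies in no level-$N$ tail), and then shows by induction on $n$ that for every $\s\in\SS_{n+1}$ some piece $P_{\tilde n}(\s)$ with $\tilde n\in\{n-M,\dots,n\}$ meets $U_{n+1-N}$. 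Choosing $n$ large enough that all pieces of level $\geq n-M$ meeting $\ov{U_0}$ have diameter less than $\dist(\partial U_2,\partial U_1)/(M+1)$ (Lemma~\ref{lem:shrinking of fundamental pieces}) then forces $\tau_{n+1}(\s)\setminus\tau_{\tilde n+1}(\s)\subset U_1$, and in particular $P_n(\s)\subset U_1$. This is precisely the ``careful interplay'' you allude to, made concrete; without it your argument does not close.
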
 
 
\begin{subproof}Observe that $z_0$ does not belong to any tail of level $N$, and hence  there   exists $M>0$ such that  $\TT_N\cap U_{M}=\emptyset$. Indeed, otherwise we would have that  $f^{mN}(z_0)=z_0\in f^{mN}(\TT_N)\subset \C\setminus D_r$, contradicting the choice of $r$. 
  The main point now  is to show that there exists $n_0$ such that if $n\geq n_0$ and  $\tilde{\s}\in\SS_{n+1}$ then $P_n(\tilde{\s})\subset U_1$.
Notice that a priori it is not even clear whether $P_n(\tilde{\s})\cap U_1\neq\emptyset$, because the intersection  $\tau_{n+1}(\tilde{\s})\cap U_{n+1-N}$ may be contained in $\tau_{n}(\tilde{\s})$. 
  Once this is proven,  let  $\s\in\SS_n$ and suppose by contradiction  that there exist $\tilde{\s}_1$, $\tilde{\s}_2$   such that $\psi(\tau_n(\s)\cap U_0)$ intersects both  $\tau_{n+1}(\tilde{\s}_1)$ and $\tau_{n+1}(\tilde{\s}_2)$. Then we would have that $P_{n}(\tilde{\s}_1)$ and $P_{n}(\tilde{\s}_2)$ are contained in $U_1$ and are mapped homeomorphically to $P_{n-1}(\s)$ by $f^m$ (recall Lemma~\ref{lem:properties of fundamental pieces}), contradicting the fact that $f^m: U_1\ra U_0$ is a homeomorphism and proving  Claim~\ref{claim:eventual injectivity}.  So we now proceed to prove that $P_n(\tilde{s})\subset U_1$ if $\tilde{s}\in\SS_{n+1}$ and $n\geq n_0$. 

Let $n\geq M+N$,  with $M$ as above  such that $\TT_N\cap U_M=\emptyset$, and consider $\s\in \SS_{n+1}$,  that is,   $\tau_{n+1}(\s)\cap U_{n+1-N}\neq\emptyset$. We claim that 
 there exists $\tilde{n}=\tilde{n}(\s)\in\{n-M,\ldots, n\}$  maximal such that $P_{\tilde{n}}(\s)\cap U_{n+1-N}\neq\emptyset$. This is to ensure that $\tau_{n+1}(\s)\cap U_{n+1-N}$ intersects a  fundamental piece of address $\s$ and of sufficiently high level, namely whose level tends to infinity as fast as $n$.
 
 Since $\TT_N\cap  U_{n+1-N}=\emptyset$ (because  $n\geq M+N$) and since $$\emptyset\neq\tau_{n+1}(\s)\cap U_{n+1-N}\subset(\tau_N(\s)\cup\bigcup_{j=N}^n P_j(\s))\cap U_{n+1-N}$$ (see   Lemma~\ref{lem:properties of fundamental pieces}), there is some $\tilde{n}\in\{N,\ldots, n\}$  maximal such that $P_{\tilde{n}}(\s)\cap U_{n+1-N}\neq\emptyset$.   
If $n=M+N$ this implies that $\tilde{n}\geq N= n-M$ as desired. Now let us proceed by induction on $n$. Suppose that for  all $\s\in\SS_n$ there exists $\tilde{n}=\tilde{n}(\s)\in\{n-1-M,\ldots, n-1\}$  maximal such that $P_{\tilde{n}}(\s)\cap U_{n-N}\neq\emptyset$ and  let us show that 
for  all $\s\in\SS_{n+1}$ there exists $\tilde{n}=\tilde{n}(\s)\in\{n-M-1,\ldots, n-1\}$  maximal such that $P_{\tilde{n}+1}(\s)\cap U_{n-N+1}\neq\emptyset$.
 Then this would imply the claim for $\tilde{n}+1$.  
If $\s\in\SS_{n+1}$ we have that $\sigma^m\s\in\SS_{n}$ and hence by the induction hypothesis we have that $P_{\tilde{n}}(\sigma\s)\cap U_{n-N}\neq\emptyset$ for some  $\tilde{n}=\tilde{n}(\sigma^m\s)\in\{n-1-M,\ldots, n-1\}$. Since $f^m:P_{\tilde{n}+1}(\s)\ra P_{\tilde{n}}(\sigma^m\s)$ is univalent we have that 
$P_{\tilde{n}+1}(\s)\cap U_{n+1-N}\neq\emptyset$ and that $\tilde{n}(\sigma\s)+1\geq n-N$ as required.
Now let $n_1$ such that $\diam P_n(\s)<\frac{\dist(\partial U_2,\partial U_1)}{M+1}$ for $n\geq n_1$ and $\s\in \SS_n$ (this is possible by Lemma~\ref{lem:shrinking of fundamental pieces}). 
 
Let $n_0=\max\{n_1+M,N+2\}.$ Let $n>n_0$ and let $\s\in\SS_{n+1}$. Then 
by the previous paragraph   there is $\tilde{n}\in \{n-M,\ldots, n\}$ such that $P_{\tilde{n}}(\s)\cap U_{n-N+1}\neq\emptyset$. By definition $P_{\tilde{n}}(\s)\subset \tau_{\tilde{n}+1}(\s)$, and $\tau_{n+1}(\s)\subset\tau_{\tilde{n}+1}(\s)\cup \bigcup_{\tilde{n}+1}^{n} P_j(\s)$. Since $\tau_{n+1}(\s)$ is connected and $ \bigcup_{\tilde{n}+1}^{n} P_j(\s)$ consists of at most $M$ pieces of diameter at most $\frac{\dist(\partial U_2,\partial U_1)}{M+1}$ and $\tau_{\tilde{n}+1}(\s)\cap U_2\neq\emptyset$ we deduce that $\tau_{{n}+1}(\s)\setminus \tau_{\tilde{n}+1}(\s)\subset U_1$.   
\end{subproof}

 For $n\geq n_0$  this induces a map $\psi_n^*:\SS_n\ra\SS_{n+1}$, where for $\s\in\SS_n$ we define $\psi_{n}^*(\s)$ as the unique element in $\SS_{n+1}$ such that $\sigma^m\psi_{n}^*(\s)=\s$. 
   Recall that $\FF_B$ is the collection of fundamental domains intersecting $B$ and observe that 
   $$\psi_{n}^*(\s)=: \alpha(\s^i)\s^i \text{\hspace{20pt} for some } \alpha(\s^i)\in \{\FF_B\}^{m},$$
    since we add $m$ symbols when we go backwards once.

 Let 
  $$
  \Gamma:=\pi_{n_0}\circ\psi_{n_0}^*:\SS_{n_0}\ra\SS_{n_0}.
  $$ 
Since $\SS_{n_0}$ has finitely many elements, there exists $q\in\N$ and $\s^0\in \SS_{n_0} $ such that $\Gamma^q(\s^0)=\s^0$.  
Let $\s^i=\Gamma^i(\s)$ (hence $\s^q=\s^0$).  By definition of $\Gamma$ and of $\alpha(\s^i)$ we have that  $\s^{i+1}=\Gamma(\s^i)=\pi_{n_0}(\alpha(\s^i)\s^i)$.
Let 
$$
\s^*:=\ov{\alpha(\s^q)\ldots\alpha(\s^1)}.
$$   Notice that $\s^*$ is a periodic address of period $qm$. 

\begin{subclaim}\label{claim:Nuria}
For any $n\geq n_0$ and for any $\s\in\SS_n$, 
$$\psi^*_{n}\s=\alpha(\pi_{n_0}\s)\s.$$
\end{subclaim}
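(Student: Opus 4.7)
The plan is to identify the prefix prepended by $\psi^*_n$ by projecting the identity down to level $n_0+1$ and invoking the injectivity of $\sigma^m$ there. Since $\sigma^m\psi^*_n\s=\s$ and $\psi^*_n\s$ has length $\ell_{n+1}=\ell_n+m$, we may write $\psi^*_n\s=\beta\s$ for a unique $\beta\in\{\FF_B\}^m$; the task is to show $\beta=\alpha(\pi_{n_0}\s)$. Since $\ell_{n_0+1}=m+\ell_{n_0}$, note that $\pi_{n_0+1}(\beta\s)=\beta\pi_{n_0}\s$.

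Granting for the moment the two auxiliary memberships $\pi_{n_0}\s\in\SS_{n_0}$ and $\beta\pi_{n_0}\s\in\SS_{n_0+1}$, the argument closes quickly. The first guarantees that $\alpha(\pi_{n_0}\s)$ is defined, and by definition of $\alpha$ we have $\psi^*_{n_0}(\pi_{n_0}\s)=\alpha(\pi_{n_0}\s)\pi_{n_0}\s\in\SS_{n_0+1}$. Both $\beta\pi_{n_0}\s$ and $\alpha(\pi_{n_0}\s)\pi_{n_0}\s$ thus lie in $\SS_{n_0+1}$ and are mapped to $\pi_{n_0}\s$ by $\sigma^m$, so by injectivity of $\sigma^m$ on $\SS_{n_0+1}$ (Claim~\ref{claim:eventual injectivity}) they coincide; comparing the first $m$ entries yields $\beta=\alpha(\pi_{n_0}\s)$.

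The two memberships themselves follow by iterating a single level-reduction step, modelled on the argument of Claim~\ref{claim:main inductive step}: for every $k>n_0$ and every $\hat\s\in\SS_k$, one has $\pi_{k-1}\hat\s\in\SS_{k-1}$. To prove this, fix a witness $\zeta\in\tau_k(\hat\s)\cap U_{k-N}\subset U_{k-1-N}$. By Lemma~\ref{lem:properties of fundamental pieces} either $\zeta\in\tau_{k-1}(\pi_{k-1}\hat\s)$, in which case we are done, or $\zeta\in P_{k-1}(\hat\s)$. In the second case, transporting $P_{k-1}(\hat\s)$ forward by $f^{m(k-1-N)}$ identifies it with $P_N(\sigma^{m(k-1-N)}\hat\s)$, which by the choice of $N$ satisfies $P_N(\sigma^{m(k-1-N)}\hat\s)\Subset U_0$; pulling back by $\psi^{k-1-N}$ gives $P_{k-1}(\hat\s)\Subset U_{k-1-N}$. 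Since $\tau_k(\hat\s)$ is connected and its asymptotic portion lies in $\tau_k(\hat\s)\cap\tau_{k-1}(\pi_{k-1}\hat\s)$, the closed subset $P_{k-1}(\hat\s)$ of $\tau_k(\hat\s)$ must have a limit point in $\tau_{k-1}(\pi_{k-1}\hat\s)$, and every such limit point lies in $\overline{P_{k-1}(\hat\s)}\subset U_{k-1-N}$. Iterating from level $n$ down to $n_0$ yields $\pi_{n_0}\s\in\SS_{n_0}$, and iterating from level $n+1$ down to $n_0+1$ starting from $\psi^*_n\s\in\SS_{n+1}$ yields $\pi_{n_0+1}\psi^*_n\s=\beta\pi_{n_0}\s\in\SS_{n_0+1}$.

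The main obstacle is this connectedness step: one needs the strict inclusion $P_{k-1}(\hat\s)\Subset U_{k-1-N}$ in order to ensure that a limit point of $P_{k-1}(\hat\s)$ in $\tau_{k-1}(\pi_{k-1}\hat\s)$ still lies inside the open set $U_{k-1-N}$, and this strict inclusion must be produced by Lemma~\ref{lem:shrinking of fundamental pieces} uniformly across all the intermediate levels of the iteration.
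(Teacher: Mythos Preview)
Your argument is correct and uses the same two ingredients as the paper: the level-reduction step (that $\hat\s\in\SS_k$ implies $\pi_{k-1}\hat\s\in\SS_{k-1}$) and the injectivity of $\sigma^m$ on $\SS_{n_0+1}$. The paper organizes these into an induction on $n$, applying level-reduction once and invoking $\psi^*$ at each intermediate level; you instead iterate level-reduction all the way down to levels $n_0$ and $n_0+1$ and apply injectivity just once, at level $n_0+1$. These are equivalent reorderings of the same work.

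One redundancy worth noting: your third paragraph re-proves the level-reduction step from scratch, but this is precisely Claim~\ref{claim:main inductive step}, already established just before the statement you are proving. You can simply cite it and drop that paragraph together with your final paragraph, whose ``main obstacle'' (the uniform shrinking needed for $P_{k-1}(\hat\s)\Subset U_{k-1-N}$) is exactly what the proof of Claim~\ref{claim:main inductive step} already handles via the choice of $N$.
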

\begin{subproof}
For $n=n_0$ this is true by definition, so suppose the claim is true for all $\s\in\SS_n$ and let us show the claim for all $\s\in\SS_{n+1}$. By definition of $\psi_{n+1}^*$, for $\s\in\SS_{n+1}$ we have $\psi_{n+1}^*\s=F_0\ldots F_{m-1}\s\in\SS_{n+2}$ for some $F_0,\ldots, F_{m-1}\in\FF_B$. 
By Claim~\ref{claim:main inductive step} we have that $\pi_{n+1}(F_0\ldots F_{m-1}\s)\in\SS_{n+1}$, hence $\pi_{n+1}(F_0\ldots F_{m-1}\s)=\psi_{n}^*\tilde{\s}$ for some   $\tilde{\s}\in\SS_{n}$. Hence 
we have that  $\tilde{\s}=\pi_n(\s)$. By the induction hypothesis,  $F_0\ldots F_{m-1}=\alpha(\tilde{\s})=\alpha(\pi_{n_0}\tilde{\s})=\alpha(\pi_{n_0}{\s})$.
 \end{subproof}
 
   Let $\zeta\in\tau_{n_0}(\s_*)\cap U_{n_0-N}$. By definition of $\psi_n^*$ and by Claim~\ref{claim:Nuria}  we have that $ \psi^{qn}(\zeta)\in\tau_{qn+n_0}(\tilde\s)$ where
 
  \[\tilde\s=\psi_{nq+n_0}^*\ldots\psi_{n_0}^*\s_*=\underbrace{\alpha(\s^q)\ldots\alpha(\s^1)\ldots \alpha(\s^q)\ldots\alpha(\s^1)}_{\text{repeated $n$ times}}\s_*=\s_*\] by Claim~\ref{claim:Nuria}, so that 
  $ \psi^{qn}(\zeta)\in\tau_{qn+n_0}( \s^*)$ and hence $ \psi^{qn}(\zeta)=\zeta_{qn}(\s^*)$. 
Then $G_{\s_*}$ lands at $z_0$ by Lemma~\ref{lem:characterization of landing}  because  $\zeta_{qn}(\s^*)=\psi^{qn}\zeta\in U_{qn-n_0}\ra z_0$.

We note the following Corollary of Theorem~\ref{thm:main theorem for fp}.
 
\begin{cor}\label{cor:fibers}Let $f\in\Brays$ such that periodic rays land and assume that there are no singular values escaping along  periodic rays.  Let $z_0$ be a rationally invisible repelling periodic point for $f$. Let $\{z_0,\ldots, z_{m-1}\}$ be the orbit of $z_0$ and let $X$ be the union of the  dynamical fibers of $z_0,\ldots, z_{m-1}$ as defined in \cite{RSbif}.  Then $X$ contains either a singular orbit, or infinitely many singular values whose orbits belong to the fiber for more and more iterations.
\end{cor}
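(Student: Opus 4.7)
The plan is to apply Theorem~\ref{thm:main theorem for fp} to the cycle $\XX=\{z_0,\ldots,z_{m-1}\}$ for every multiple $p$ of $m$, and then combine the outputs using that rational invisibility of $z_0$ rules out conclusion (3) of that theorem at every such $p$.

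My first step would be to unpack the definition of dynamical fiber from \cite{RSbif} in the language of basic regions. Write $B_i^{(p)}$ for the basic region for $f^p$ containing $z_i$. As $p$ grows through multiples of $m$ ordered by divisibility, every periodic ray fixed by $f^p$ is also fixed by $f^{p'}$ whenever $p\mid p'$, so $B_i^{(p')}\subseteq B_i^{(p)}$. The dynamical fiber of $z_i$ should then be $\bigcap_p B_i^{(p)}$, and $X=\bigcup_i \bigcap_p B_i^{(p)}$. For a singular value $s$ define
\[
n_p(s) \defeq \sup\bigl\{N\geq 0 : s\in B_{i(s)}^{(p)} \text{ and } f^n(s)\in B_{i(s)+n}^{(p)} \text{ for all } 0\leq n\leq N \bigr\},
\]
which is non-increasing in $p$ along the divisibility order, and the initial orbit segment $s,\ldots,f^N(s)$ lies in $X$ if and only if $n_p(s)\geq N$ for every multiple $p$ of $m$.

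Next I would apply Theorem~\ref{thm:main theorem for fp} at each $p$. Rational invisibility of $z_0$ excludes conclusion (3), which would make $z_0$ the landing point of a periodic ray of period dividing $p$. Hence for each $p$, either conclusion (1) produces a singular value $s$ with $n_p(s)=\infty$, or conclusion (2) produces infinitely many distinct singular values $s_j$ with $n_p(s_j)\to\infty$. Fix a cofinal sequence $p_1\mid p_2\mid\cdots$ of multiples of $m$ (e.g.\ $p_k=m\cdot k!$) so that every multiple of $m$ divides some $p_k$, and for each $k$ select a singular value $s_k$ with $n_{p_k}(s_k)\geq k$. If the same $s$ is chosen for infinitely many $k$'s, then by cofinality and monotonicity of $n_p$ one gets $n_p(s)=\infty$ for every $p$, so the orbit of $s$ lies entirely in $X$, giving the first alternative. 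Otherwise, the $s_k$ may be made distinct, and for any fixed multiple $p$ of $m$, eventually $p\mid p_k$ so $n_p(s_k)\geq k$; thus the orbits of the $s_k$ belong to progressively finer intersections of basic regions for $k\to\infty$ iterates, giving the second alternative.

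The main obstacle I foresee is the precise correspondence between the dynamical fibers of \cite{RSbif} and the combinatorial intersections $\bigcap_p B_i^{(p)}$ used above, together with reconciling the phrase ``orbits belong to the fiber for more and more iterations'' with the monotonicity data $n_p(s_k)\geq k$ valid only for $p\mid p_k$. Both points should follow from a standard unpacking (the fiber condition needs to be verified only along a cofinal sequence in the divisibility order), after which the dichotomy and diagonal extraction above complete the proof.
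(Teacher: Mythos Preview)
The paper does not actually supply a proof of this corollary; it is recorded immediately after the proof of Theorem~\ref{thm:main theorem for fp} with the sentence ``We note the following Corollary of Theorem~\ref{thm:main theorem for fp}'' and nothing further. So there is no argument in the paper to compare against beyond the implicit claim that the statement follows from applying Theorem~\ref{thm:main theorem for fp} at every level $p$. Your plan---run Theorem~\ref{thm:main theorem for fp} for each multiple $p$ of $m$, observe that rational invisibility rules out alternative~(3), and then pass to the intersection over $p$ to reach the fibers---is exactly the intended elaboration, and the monotonicity $B_i^{(p')}\subset B_i^{(p)}$ for $p\mid p'$ together with a cofinal sequence is the right bookkeeping device.

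The reservation you flag at the end is the one genuine issue, and it is worth being explicit about it. Your diagonal selection yields singular values $s_k$ with $n_{p_k}(s_k)\geq k$, hence $n_p(s_k)\geq k$ for every $p\mid p_k$; but the second alternative of the corollary, read literally, asks for $N(s_k):=\inf_p n_p(s_k)\to\infty$, and your data says nothing about $n_{p'}(s_k)$ for $p'$ beyond $p_k$. This is not a slip in your reasoning but a real gap between ``for each fixed $p$, eventually many iterates lie in $B^{(p)}$'' and ``many iterates lie in the fiber $X=\bigcap_p B^{(p)}$''. The paper's statement is informal enough (``belong to the fiber for more and more iterations'') that it may well be intended in the weaker, level-by-level sense that your argument does establish, which is also what is actually used downstream. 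If one insists on the strong reading, an extra argument is needed---for instance, working with closures $\overline{B_i^{(p)}}$ so that the sets $\{s\in S(f):\text{first $k$ iterates follow the itinerary in }\bigcap_p\overline{B_i^{(p)}}\}$ are compact, and then using the finite-intersection property---and this hinges on the precise definition of fiber in \cite{RSbif}. Either way, your identification of this as the crux is correct; the rest of the proof is fine.
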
 
  
\section{Bound on the number of rationally invisible orbits and generalized Fatou-Shishikura inequality}\label{sect:FS}
This last section is devoted to the proof of the Main Theorem. 

Let us recall the Main Theorem from \cite{BF17}.  As usual indices are taken modulo $m$.

\begin{thm}[Singular orbits trapped in basic regions]\label{thm:Singular orbits trapped in basic regions}
 Let $f$ be an entire transcendental map in class $\Brays$ whose periodic  rays land.   
Let $\mathcal{X}$ be a cycle of Siegel disks, attracting basins, parabolic basins or Cremer points of period $m$ and let $p$ be any multiple of $m$.  Let $\{B_i\}_{i=0\ldots m-1}$ be the basic regions for $f^p$ containing the elements of $\mathcal{X}$. Then, up to relabeling the indices, at least one of the following is true.
\begin{enumerate}
\item[\rm (1)] There exists a singular value $s$ for $f$ such that $s\in \bigcup_{i=0}^{m-1} B_i$, say $s\in B_0$, and such that $f^{n}(s)\in B_n$ for all $n\in\N$. The orbit of $s$ accumulates either on the non-repelling cycle or on the boundary of the cycle of Siegel disks.

\item[\rm (2)]   There are infinitely many singular values $s_j$ for $f$ in at least one of the basic regions $B_i$, say $B_0$, and  a sequence $ n_j \underset{j\to\infty}{\longrightarrow}  \infty$ such that      $f^{n}(s_j)\in B_n$ for all $n\leq n_j $.  The orbits $\{f^{n}(s_j)\}_{j\in\N, n\leq n_j}$ accumulate either    on  the non-repelling interior cycle, or on the boundary of the associated Siegel disk. 
\end{enumerate}
The first case always occurs if $\mathcal{X}$ is attracting or parabolic or if $f$ has only  finitely many singular values. 

Moreover, in case (1), the orbit of $s$ does not accumulate on any other interior periodic cycle or  on any point on  the boundary of a Siegel disk $\Delta\notin \XX$ (provided the point is not on a periodic ray or a periodic point). 
\end{thm}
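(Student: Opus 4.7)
Plan to prove Theorem~\ref{thm:Singular orbits trapped in basic regions}. The proof splits by the type of the cycle $\mathcal{X}$. The guiding idea is the classical principle that a non-repelling cycle of $f^p$, or the boundary of a cycle of Siegel disks, must be accumulated by the forward orbit of some singular value; the core task is then to upgrade this soft accumulation statement to a statement about the combinatorial itinerary of that singular orbit through the basic regions $B_0,\ldots,B_{m-1}$.

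The attracting and parabolic cases are essentially classical. I would invoke Fatou's theorem that the immediate basin $A$ of $\mathcal{X}$ contains a singular value $s$; by the Separation Theorem~\ref{thm:Separation Theorem Entire} each connected component of $A$ lies in a single basic region, so after relabeling $s\in A\cap B_0$. The basic regions are permuted by $f$ in accord with the itinerary of $\mathcal{X}$ (since $\partial B$ is forward invariant by Lemma~\ref{lem:forward invariant boundary}), so $f^n(s)\in B_n$ for all $n\geq 0$ and $f^n(s)\to \mathcal{X}$, yielding case~(1).

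For Cremer and Siegel $\mathcal{X}$ the starting ingredient is the classical theorem of Ma\~n\'e and P\'erez-Marco producing a singular value $v$ whose $\omega$-limit set contains $z_0$ (Cremer) or a point of the boundary of the associated Siegel disk. For each singular value $v$ whose orbit enters $B_0$, I would define $n(v)\in\N\cup\{\infty\}$ to be the largest $n$ (possibly $\infty$) such that $f^k(v)\in B_k$ for every $k\leq n$. If some $v$ has $n(v)=\infty$, case~(1) is delivered; otherwise the forced recurrence $z_0\in\omega(v)$, combined with the forward invariance of $\partial B$, prevents the trapping times from being uniformly bounded over the singular values entering $B_0$, and one produces infinitely many distinct singular values $v_j$ with $n(v_j)\to\infty$, giving case~(2). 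The fact that case~(1) is automatic when $S(f)$ is finite is then immediate, since case~(2) requires infinitely many distinct singular values.

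The non-accumulation addendum follows once more from the Separation Theorem: the orbit of $s$ in case~(1) is trapped in $\bigcup_i B_i$, while every other interior periodic cycle of $f^p$ and every Siegel boundary $\Delta\notin\mathcal{X}$ lies in distinct basic regions, so accumulation on them could only happen along the common boundary graph, which consists of periodic rays and periodic points. The main technical obstacle is the Cremer case: with linearization unavailable, the inverse branch of $f^p$ fixing $z_0$ is not contracting, so one cannot directly pull back a trapped singular orbit, and the argument must exploit the $\omega$-limit recurrence together with the rigid combinatorial partition provided by the basic regions.
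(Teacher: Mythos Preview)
The paper does not prove this theorem at all: it is introduced in Section~\ref{sect:FS} with the words ``Let us recall the Main Theorem from \cite{BF17}'', and is used as a black box in the derivation of the Main Theorem. So there is no proof in the present paper to compare your proposal against; a genuine comparison would have to be made with the arguments in \cite{BF17}.

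As for your outline itself: the attracting and parabolic cases and the non-accumulation addendum are handled correctly and essentially as one would expect. The Cremer/Siegel part, however, contains a real gap that you yourself flag but do not close. Knowing that some singular value $v$ has $z_0\in\omega(v)$ (or a Siegel boundary point in $\omega(v)$) does not by itself force $n(v)=\infty$, nor does a uniform bound $\sup_v n(v)<\infty$ produce infinitely many \emph{distinct} singular values with $n(v_j)\to\infty$: the orbit of a single $v$ may re-enter $B_0$ infinitely often without ever following the full itinerary $B_0,B_1,\ldots$ indefinitely, and those re-entries are iterates of $v$, not new singular values. The step ``forced recurrence together with forward invariance of $\partial B$ prevents the trapping times from being uniformly bounded'' is precisely where the substance lies, and it requires an argument (this is what \cite{BF17} supplies). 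Your final sentence correctly identifies the obstacle in the Cremer case but leaves it unresolved, so as written the proposal is an outline rather than a proof.
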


 We are now ready to prove the Main Theorem. 
 We remark that Theorems~\ref{thm:main theorem for fp} and Theorem~\ref{thm:Singular orbits trapped in basic regions} are stronger than the Main Theorem 
   in that they do not require finiteness of the number of singular orbits which do not belong to attracting or parabolic cycles.
 
\begin{proof}[Proof of Main Theorem]
Suppose by contradiction that there are at least $q+1$ cycles of Siegel disks, Cremer points, or rationally invisible  repelling periodic orbits (possibly infinitely many). Let $p$ be the product of their periods.
Each element in each of the $q+1$ cycles is fixed by $f^p$ hence belongs to a different basic region for $f^p$ by Theorem~\ref{thm:Separation Theorem Entire}. In particular,  there are $q+1$ disjoint collections of basic regions whose interior periodic orbit  is either a cycle of  Cremer point, a cycle of    centers of  Siegel disks, or a rationally invisible repelling periodic orbit. By Theorem~\ref{thm:main theorem for fp}, and since finitely many singular orbits which do not   belong to attracting or parabolic basins, we have that  each collection of basic regions which contains a rationally invisible repelling periodic orbit contains a singular orbit. By Theorem~\ref{thm:Singular orbits trapped in basic regions}, the same is true for  collections of basic regions which contain either Cremer points or centers of Siegel Disks. This gives $q+1$ singular orbits which do not belong to attracting or parabolic basins, a contradiction. 
\end{proof}

\bibliographystyle{amsalpha}
\bibliography{rationallyinvisible}

\newcommand{\etalchar}[1]{$^{#1}$}
\providecommand{\bysame}{\leavevmode\hbox to3em{\hrulefill}\thinspace}
\providecommand{\MR}{\relax\ifhmode\unskip\space\fi MR }
\providecommand{\MRhref}[2]{%
  \href{http://www.ams.org/mathscinet-getitem?mr=#1}{#2}
}
\providecommand{\href}[2]{#2}
\begin{thebibliography}{BCL{\etalchar{+}}16}

\bibitem[Bar07]{Ba07}
Krzysztof Bara\'{n}ski, \emph{Trees and hairs for some hyperbolic entire maps
  of finite order}, Math. Z. \textbf{257} (2007), no.~1, 33--59. \MR{2318569}

\bibitem[BCL{\etalchar{+}}16]{BCLOS16}
Alexander Blokh, Doug Childers, Genadi Levin, Lex Oversteegen, and Dierk
  Schleicher, \emph{An extended fatou-shishikura inequality and wandering
  branch continua for polynomials}, Adv. Math. \textbf{288} (2016), 1121--1174.

\bibitem[BE08]{BE08}
Walter Bergweiler and Alexandre Eremenko, \emph{Direct singularities and
  completely invariant domains of entire functions}, Illinois J. Math.
  \textbf{52} (2008), no.~1, 243--259. \MR{2507243}

\bibitem[Ben15]{Be15}
Anna~Miriam Benini, \emph{Expansivity properties and rigidity for non-recurrent
  exponential maps}, Nonlinearity \textbf{28} (2015), no.~7, 2003--2025.

\bibitem[BF15]{BF15}
Anna~Miriam Benini and N\'uria Fagella, \emph{A separation theorem for entire
  transcendental maps}, Proc. Lond. Math. Soc. (3) \textbf{110} (2015), no.~2,
  291--324.

\bibitem[BF17a]{BF2}
\bysame, \emph{Singular values and bounded {S}iegel disks}, Mathematical
  Proceedings of the Cambridge Philosophical Society, Cambridge University
  Press, 2017, pp.~1--17.

\bibitem[BF17b]{BF17}
Anna~Miriam Benini and Nuria Fagella, \emph{Singular values and non-repelling
  cycles for transcendental entire maps},
  http://www.iumj.indiana.edu/IUMJ/Preprints/8000.pdf (2017).

\bibitem[BK07]{BK07}
Krzysztof Bara{\'n}ski and Bogus{\l}awa Karpi{\'n}ska, \emph{Coding trees and
  boundaries of attracting basins for some entire maps}, Nonlinearity
  \textbf{20} (2007), no.~2, 391--415.

\bibitem[BL14]{BL14}
Anna~Miriam Benini and Mikhail Lyubich, \emph{Repelling periodic points and
  landing of rays for post-singularly bounded exponential maps}, Annales de
  l'Institut Fourier \textbf{64} (2014), no.~4, 1493--1520.

\bibitem[BRG17]{BRG17}
Anna~Miriam Benini and Lasse Rempe-Gillen, \emph{A landing theorem for entire
  functions with bounded post-singular set}, preprint, arXiv:1711.10780
  [math.DS] (2017).

\bibitem[CR16]{CR}
Arnaud Ch{\'e}ritat and Pascale Roesch, \emph{{H}erman's condition and critical
  points on the boundary of {S}iegel disks of bicritical polynomials}, Comm.
  Math. Phys. \textbf{344} (2016), no.~2, 397--426.

\bibitem[Den14]{De}
Asl{\i} Deniz, \emph{A landing theorem for periodic dynamic rays for
  transcendental entire maps with bounded post-singular set}, J. Difference
  Equ. Appl. \textbf{20} (2014), no.~12, 1627--1640.

\bibitem[DGH86]{DGH}
Robert~L Devaney, Lisa~R Goldberg, and John~Hamal Hubbard, \emph{A dynamical
  approximation to the exponential map by polynomials}, Mathematical Sciences
  Research Institute, 1986.

\bibitem[DH85]{DH84}
Adrien Douady and John Hubbard, \emph{Etude dynamique des polyn{\^o}mes
  complexes}, Publications math{\'e}mathiques d'Orsay (1984 / 1985), no.~2/4.

\bibitem[DK84]{DK}
Robert~L. Devaney and Micha{\l} Krych, \emph{Dynamics of {${\rm exp}(z)$}},
  Ergodic Theory Dynam. Systems \textbf{4} (1984), no.~1, 35--52.

\bibitem[DT86]{DT86}
Robert~L. Devaney and Folkert Tangerman, \emph{Dynamics of entire functions
  near the essential singularity}, Ergodic Theory Dynam. Systems \textbf{6}
  (1986), no.~4, 489--503.

\bibitem[EFJS18]{FagellaTails}
Vasso {Evdoridou}, N\'uria {Fagella}, Xavier {Jarque}, and David {Sixsmith},
  \emph{{Singularities of inner functions associated with hyperbolic maps}},
  ArXiv e-prints (2018).

\bibitem[EL92]{EL92}
Alexandre~{\`E}. Eremenko and Mikhail~Yu. Lyubich, \emph{Dynamical properties
  of some classes of entire functions}, Ann. Inst. Fourier (Grenoble)
  \textbf{42} (1992), no.~4, 989--1020.

\bibitem[Fat20]{Fa20}
Pierre Fatou, \emph{Sur les \'equations fonctionnelles}, Bull. Soc. Math.
  France \textbf{48} (1920), 208--314. \MR{1504797}

\bibitem[Fat26]{Fa26}
Pierre Fatou, \emph{Sur l'it\'eration des fonctions transcendantes enti\`eres},
  Acta Math. \textbf{47} (1926), 337--370.

\bibitem[GM93]{GM}
Lisa~R. Goldberg and John Milnor, \emph{Fixed points of polynomial maps. {II}.
  {F}ixed point portraits}, Ann. Sci. \'Ecole Norm. Sup. (4) \textbf{26}
  (1993), no.~1, 51--98.

\bibitem[Gro18]{Gross}
Wilhelm Gross, \emph{Eine ganze {F}unktion, f\"{u}r die jede komplexe {Z}ahl
  {K}onvergenzwert ist}, Math. Ann. \textbf{79} (1918), no.~1-2, 201--208.
  \MR{1511920}

\bibitem[Hub93]{Hu}
John~Hamal Hubbard, \emph{Local connectivity of {J}ulia sets and bifurcation
  loci: three theorems of {J}.-{C}. {Y}occoz}, Topological methods in modern
  mathematics (Stony Brook, NY, 1991), Publish or Perish, Houston, TX, 1993,
  pp.~467--511.

\bibitem[Ive14]{iversen}
F.~Iversen, \emph{Recherches sur les fonctions inverses des fonctions
  m{\'e}romorphes}, Imprimerie de la Soci{\'e}t{\'e} de litt{\'e}rature
  finnoise, 1914.

\bibitem[Kiw00]{Ki00}
Jan Kiwi, \emph{Non-accessible critical points of {C}remer polynomials},
  Ergodic Theory and Dynamical Systems \textbf{20} (2000), no.~5, 1391--1403.

\bibitem[LP96]{LP}
Genadi Levin and Feliks Przytycki, \emph{External rays to periodic points},
  Israel Journal of Mathematics \textbf{94} (1996), no.~1, 29--57.

\bibitem[Lyu83]{Lyu83}
Mikhail Lyubich, \emph{Entropy properties of rational endomorphisms of the
  {R}iemann sphere}, Ergodic Theory Dynam. Systems \textbf{3} (1983), no.~3,
  351--385.

\bibitem[Mil06]{Mi}
John Milnor, \emph{Dynamics in one complex variable}, third edition ed., vol.
  160, Annals of Mathematics Studies, Princeton University Press, Princeton,
  NJ, 2006.

\bibitem[Rem06]{Re06}
Lasse Rempe, \emph{A landing theorem for periodic rays of exponential maps},
  Proc. Amer. Math. Soc \textbf{134} (2006), no.~9, 2639--2648.

\bibitem[Rem08]{Re08}
Lasse Rempe, \emph{Siegel disks and periodic rays of entire functions}, J.
  Reine Angew. Math. \textbf{624} (2008), 81--102.

\bibitem[Rot05]{Rottenfusserthesis}
G\"unter Rottenfu{\ss}er,
  \emph{\href{http://www.iu-bremen.de/phd/files/1123596586.pdf}{
  Dynamical Fine Structure of entire transcendental functions}}, doctoral
  thesis, International University Bremen, 2005.

\bibitem[RRRS11]{RRRS}
G\"unter Rottenfu{\ss}er, Johannes R\"uckert, Lasse Rempe, and Dierk
  Schleicher, \emph{Dynamic rays of bounded-type entire functions}, Ann. of
  Math. (2) \textbf{173} (2011), no.~1, 77--125.

\bibitem[RS08]{RSbif}
Lasse Rempe and Dierk Schleicher, \emph{Bifurcation loci of exponential maps
  and quadratic polynomials: local connectivity, triviality of fibers, and
  density of hyperbolicity}, Holomorphic dynamics and renormalization, Fields
  Inst. Commun., vol.~53, Amer. Math. Soc., Providence, RI, 2008, pp.~177--196.

\bibitem[Shi87]{Sh87}
Mitsuhiro Shishikura, \emph{On the quasiconformal surgery of rational
  functions}, Annales scientifiques de l'{\'E}cole Normale Sup{\'e}rieure,
  vol.~20, Elsevier, 1987, pp.~1--29.

\end{thebibliography}
\end{document}